\numberwithin{equation}{section}
\newtheorem{theorem}{Theorem}[section]
\newtheorem{lemma}[theorem]{Lemma}
\newtheorem{corollary}[theorem]{Corollary}
\theoremstyle{definition}
\newtheorem{definition}[theorem]{Definition}
\newtheorem{remark}[theorem]{Remark}
\begin{document}

\begin{center}
{\Large \textbf{Langevin equation with nonlocal boundary conditions involving a
$\psi$--Caputo fractional  operator}%
}

\vskip.30in

{\large \textbf{Arjumand Seemab$^{1}$, Jehad Alzabut$^{2,\ast }$, Mujeeb ur
Rehman$^{1}$\\ Yacine Adjabi$^{3}$, Mohammed S. Abdo$^{4}$}} \\[2mm]
{\small $^{1}$School of Natural Sciences, National University of Sciences
and Technology, Islamabad Pakistan.\\ Email: mrehman@sns.nust.edu.pk \\[%
0pt]
$^{2 }$Department of Mathematics and General Sciences, Prince Sultan
University, 11586 Riyadh, Saudi Arabia.\\ Email: jalzabut@psu.edu.sa.\\[0pt]
$^{3}$Department of Mathematics, Faculty of Sciences University of M'hamed
Bougara, UMBB, Algeria.\\ Email: adjabiy@yahoo.fr\\[0pt]
$^{4}$Department of Mathematics, Hodeidah University, Al-Hodeidah, Yemen. \\
 Email: msabdo1977@gmail.com\\[0pt]
}
\end{center}

\vskip 4mm

\noindent {\small \textbf{Abstract.}~ This paper studies Langevin equation with nonlocal boundary conditions involving a
$\psi$--Caputo fractional derivatives operator. By the aide of fixed point techniques of Krasnoselskii and Banach, we derive new results on existence and uniqueness of the problem at hand. Further, the $\psi $-fractional Gronwall inequality and $\psi $--fractional integration by parts are employed to prove Ulam--Hyers  and Ulam--Hyers--Rassias  stability for the solutions. Examples are gifted to demonstrate the advantage of our major results. The proposed results here are more general than the existing results in the literature and obtain them as particular cases.}

\vskip.15in \footnotetext{\textbf{AMS 2010 Mathematics Subject Classification}
: $34$A$08$,\;$26$A$33$,\;  $34$A$12$,\; $34$D$20$.} \footnotetext{\textbf{\ Keywords}:
Generalized fractional operators; $\psi $--Caputo derivative; $\psi $%
--fractional Langevin type equation;  Existence and uniqueness; U-H stability type; Krasnoselskii fixed
point theorem; $\psi $--fractional Gronwall inequality.}

\footnotetext{\textbf{$^{*}$ Corresponding author: email}:
jalzabut@psu.edu.sa.} \baselineskip=14pt

\setcounter{section}{0} \numberwithin{equation}{section}

\section{Introduction}

\qquad Lately, fractional calculus has played a very significant role in various scientific fields; see for instance  \cite{KilbasST,Herrmann} and the references cited therein. As a result of this, fractional differential equations have caught the attention of many investigators working in different desciplines \cite{Podlubny,Diethelm}. However, most of researchers works have been conducted by using  fractional derivatives that mainly rely on Riemann--Liouville, Hadamard, Katugampola, Grunwald Letnikov and Caputo approaches.

Fractional derivatives of a function with respect to another function have
been considered in the classical monographs  \cite{SamkoKM,KilbasST} as a generalization of Riemann--Liouville derivative.
This fractional derivative is different from the other classical fractional
derivative as the kernel appears in terms of another function $\psi $. We will call this derivative as $\psi $--fractional derivative.\ Recently, this derivative has been
reconsidered by Almeida in \cite{Almeida} where the Caputo--type
regularization of the existing definition and some interesting properties
are provided. Several properties of this operator could be found in \cite%
{KilbasST,SamkoKM,Osler,Kiryakova,Agrawal}. For some particular cases of $\psi $%
, one can realize that $\psi $--fractional derivative can be reduced to the Caputo fractional derivative \cite{KilbasST}, the
Caputo--Hadamard fractional derivative \cite{JaradBT} and the Caputo--Erd%
\'{e}lyi--Kober fractional derivative \cite{LuchkoT}.

On the other hand, the investigation of  qualitative properties of solutions for different fractional differential (and integral ) equations is the key theme of applied mathematics research. Numerous interesting results concerning the existence, uniqueness, multiplicity, and stability of solutions or positive solutions by applying some fixed point techniques. However, most of the proposed problems have been handled concerning the classical fractional derivatives of the Riemann--Liouville and Caputo \cite%
{AhmadME,DarwichN,ObukhovskiiZA,ElShahed,Elsayed,YanST,ZhaoSHL,QinZL,SakthivelRM,z1,z2}.

In parallel with the intense investigation of fractional derivative, a normal generalization of the Langevin differential equation appears to be replacing the classical derivative by a fractional derivative to produce fractional Langevin equation (FLE). FLE was first introduced in \cite{MainradiP} and then
different types of FLE were the object of many scholars  \cite%
{AhmadAS,KiataramkulSTK,SudsutadNT, LiSS,YukunthornNT,zhouQ,ZhouAY}.
 In \cite{AhmadNAE}, the authors studied a nonlinear FLE involving two fractional orders at various intervals with three-point boundary conditions. FLE involving a Hadamard derivative type was considered in \cite{KiataramkulSTK,SudsutadNT}.

Alternatively, the stability problem of differential equations was discussed by Ulam in \cite{Ulam}). Thereafter, Hyers in \cite{Hyers} developed the concept of Ulam stability in the case of Banach spaces. Rassias provided a fabulous generalization of the Ulam--Hyers (U--H) stability of mappings by taking into account variables. His approach was refered to as Ulam--Hyers--Rassias (U--H--R) stability \cite{Rassias78}. Recently, the Ulam stability problem of implicit differential equations was extended into fractional implicit differential equations by some authors \cite {WangL,Jung,zada1,zada2}.
A series of papers was devoted to the investigation of existence, uniqueness
and U-H stability of solutions of the FLE within different kinds of
fractional derivatives.

Motivated by the recent developments in $\psi $%
--fractional calculus, in the present work, we investigate the existence,
uniqueness and stability in the sense U-H-R of solutions for the
following FLE within $\psi $--Caputo fractional derivatives involving nonlocal boundary
 conditions%
\begin{equation}
\left\{
\begin{array}{l}
\left( ^{c}D_{a+,t}^{\varrho ,\psi }\right) \left( ^{c}D_{a+,t}^{\varsigma ,\psi
}+\lambda \right) \left[ u\right] =f(t,u(t),^{c}D_{a+,t}^{\delta ,\psi }%
\left[ u\right] ),~t\in \left( a,T\right) ,\medskip \\
u(a)=0,~u(\eta )=0,~u(T)=\mu \left( J_{a+,\xi }^{\delta ,\psi }\right) \left[
u\right] ,~\lambda ,\mu >0,%
\end{array}%
\right.  \label{EQ1}
\end{equation}%
where $\left( J_{a+,\xi }^{\delta ,\psi }\right) $ and $\left(
^{c}D_{a+,t}^{\theta ,\psi }\right) $\ are $\psi $-fractional integral of
order $\delta $,$\ \psi $-Caputo fractional derivative of order $\theta \in
\{\varrho ,\varsigma ,\delta \}$ respectively, $0\leq a<\eta <\xi <T<\infty ,\
1<\varrho \leq 2,\ 0<\delta <\varsigma \leq 1\ $and $f:[a,T]\times \mathbb{R}%
\times \mathbb{R}\rightarrow \mathbb{\ R}^{+}$ is a continuous function. It is worth mentioning here that
the proposed results in this paper which rely on
$\psi $-fractional integrals and $\psi $-Caputo fractional derivatives
can generalize the existing results in the literature  and obtain them as particular cases.

The major contributions of the work are as follows: Some lemmas and definitions on $\psi $-fractional calculus theory are recalled in Section $2$. In Section $3$, we prove the existence and uniqueness of solutions for problem (\ref{EQ1}) via applying fixed point theorems. Section $4$ devotes to discuss different types of stability results for the problem (\ref{EQ1}) by the aid of $\psi $-Gronwall's inequality and $\psi $--fractional integration by parts.
Examples are given in Section $5$ to check the applicability of the theoretical findings.\
We end the paper by a conclusion in Section $6.$

\section{Preliminaries and essential Lemmas}

The standard Riemann--Liouville fractional integral of order $Re(\varrho )>0$%
, namely%
\begin{equation}
\left( J_{a+,t}^{\varrho }\right) \left[ u\right] =\frac{1}{\Gamma \left(
\varrho \right) }\int_{a}^{t}\left( t-\tau \right) ^{\varrho -1}u\left( \tau
\right) \mathrm{d}\tau ,\ t>a.  \label{EQ2}
\end{equation}

The left-sided factional integrals and fractional derivatives of a fuction $%
u $ with respect to another function $\psi $ in the sense of
Riemann-Liouville are defined as follows \cite{Osler,Almeida}%
\begin{equation}
\left( J_{a+,t}^{\varrho ,\psi }\right) \left[ u\right] =\frac{1}{\Gamma
\left( \varrho \right) }\int_{a}^{t}\psi ^{\prime }\left( \tau \right) \left(
\psi \left( t\right) -\psi \left( \tau \right) \right) ^{\varrho -1}u\left(
\tau \right) \mathrm{d}\tau  \label{EQ3}
\end{equation}%
and%
\begin{equation}
\left( D_{a+,t}^{\varrho ,\psi }\right) \left[ u\right] =\left( \frac{1}{\psi ^{\prime }\left( t\right) }%
\frac{\mathrm{d}}{\mathrm{d}t}\right) ^{n}\left( J_{a+,t}^{n-\varrho ,\psi }\right) \left[ u\right] ,  \label{EQ4}
\end{equation}%
respectively, where $n=[\varrho ]+1$

Analogous formulas can be offered for the right fractional (integral and derivative) as follows:
\begin{equation}
\left( J_{t,b-}^{\varrho ,\psi }\right) \left[ u\right] =\frac{1}{\Gamma
\left( \varrho \right) }\int_{t}^{b}\psi ^{\prime }\left( \tau \right) \left(
\psi \left( \tau \right) -\psi \left( t\right) \right) ^{\varrho -1}u\left(
\tau \right) \mathrm{d}\tau ,  \label{EQ5}
\end{equation}%
and%
\begin{equation}
\left( D_{t,b-}^{\varrho ,\psi }\right) \left[ u\right] =\left( -\frac{1}{
\psi ^{\prime }\left( t\right) }\frac{\mathrm{d}}{\mathrm{d}t}\right)
^{n}\left( J_{t,b-}^{n-\varrho ,\psi }\right) \left[ u\right] .  \label{EQ6}
\end{equation}

The left (right) $\psi $-Caputo fractional derivatives of $u$ of order $\varrho $ are
given by%
\begin{equation}
\left( ^{c}D_{a+,t}^{\varrho ,\psi }\right) \left[ u\right] =\left( J_{a+,t}^{n-\varrho ,\psi }\right) \left( \frac{1}{\psi ^{\prime }\left( t\right) }%
\frac{\mathrm{d}}{\mathrm{d}t}\right) ^{n} \left[ u\right]  \label{EQ7}
\end{equation}%
and
\begin{equation}
\left( ^{c}D_{t,b-}^{\varrho ,\psi }\right) \left[ u\right] =\left( J_{t,b-}^{n-\varrho ,\psi } \right) \left( \frac{-1}{\psi ^{\prime }\left( t\right) }%
\frac{\mathrm{d}}{\mathrm{d}t}\right) ^{n} \left[ u\right],
\label{EQ8}
\end{equation}%
respectively.
In particular, when $\varrho \in (0,1)$, we have%
\begin{equation}
\left( ^{c}D_{a+,t}^{\varrho ,\psi }\right) \left[ u\right] =\left( J_{a+,t}^{1-\varrho ,\psi }\right) \left( \frac{1}{\psi ^{\prime }\left( t\right) }%
\frac{\mathrm{d}}{\mathrm{d}t}\right) \left[ u\right]
\label{EQ9}
\end{equation}%
and%
\begin{equation}
\left( ^{c}D_{t,b-}^{\varrho ,\psi }\right) \left[ u\right] =-\left( J_{t,b-}^{1-\varrho ,\psi }\right) \left( \frac{1}{\psi ^{\prime }\left( t\right) }%
\frac{\mathrm{d}}{\mathrm{d}t}\right) \left[ u\right] ,
\label{EQ10}
\end{equation}%
where $u,\psi \in \mathcal{C}^{n}[a,b]$ two functions such that $\psi $ is
increasing and $\psi ^{\prime }(t)\neq 0$, for all $t\in \lbrack a,b]$.

We propose the remarkable paper \cite{Agrawal} in which some generalizations using $\psi$-fractional
integrals and derivatives are described.
In particular, we have

\begin{equation*}
\left\{
\begin{array}{c}
if \ \ \psi (t)\longrightarrow t,\text{ \ \ then \ }J_{a+,t}^{\varrho ,\psi }\longrightarrow J_{a+,t}^{\varrho
},\qquad \qquad \qquad \qquad \ \ \\
if \ \ \psi (t)\longrightarrow \ln t,\text{ \ \ then \ }J_{a+,t}^{\varrho ,\psi }\longrightarrow \text{ }%
^{H}J_{a+,t}^{\varrho }, \qquad \qquad \qquad \ \ \\
if \ \ \psi (t)\longrightarrow t^{\rho }, \text{ \ \ then },\text{ \ }J_{a+,t}^{\varrho ,\psi }\longrightarrow
\text{ }^{\rho }J_{a+,t}^{\varrho }, \ \rho >0, \qquad \qquad \
\end{array}%
\right.
\end{equation*}%
where $J_{a+,t}^{\varrho },^{H}J_{a+,t}^{\varrho },^{\rho }J_{a+,t}^{\varrho }$
are classical Riemann--Liouville, Hadamard, and Katugampola fractional
operators.

\begin{lemma}
\label{L1}\cite{Almeida}Given $u\in \mathcal{C}([a,b])$ and $v\in \mathcal{C}
^{n}([a,b])$, we have that for all $\varrho >0$
\begin{eqnarray*}
\int_{a}^{b}v\left( \tau \right) \left( ^{c}D_{a+,\tau }^{\varrho ,\psi
}\right) \left[ u\right] \mathrm{d}\tau &=&\int_{a}^{b}u\left( \tau \right)
\left( ^{c}D_{\tau ,b-}^{\varrho ,\psi }\right) \left[ \frac{v}{\psi ^{\prime
}}\right] \frac{\mathrm{d}}{\mathrm{d}\tau }\psi \left( \tau \right) \mathrm{%
\ d}\tau \\
&&+\left. \sum_{k=0}^{n-1}\left( -\frac{1}{\psi ^{\prime }\left( t\right) }
\frac{\mathrm{d}}{\mathrm{d}t}\right) ^{k}\left( J_{\tau ,b-}^{n-\varrho
,\psi }\right) \left[ \frac{v}{\psi ^{\prime }}\right] u_{\psi }^{\left[
n-k-1\right] }\left( \tau \right) \right \vert _{\tau =a}^{\tau =b},
\end{eqnarray*}
where
\begin{equation*}
u_{\psi }^{\left[ k\right] }\left( t\right) =\left( \frac{1}{\psi ^{\prime
}\left( t\right) }\frac{\mathrm{d}}{\mathrm{d}t}\right) ^{k}u\left( t\right)
.
\end{equation*}
\end{lemma}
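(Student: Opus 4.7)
My plan is to unfold the left-hand side using the definition of the left $\psi$-Caputo derivative, swap the order of integration by Fubini, and then perform $n$ successive classical integrations by parts which gradually transfer the $n$-fold operator $(\frac{1}{\psi'}\frac{\mathrm d}{\mathrm d t})^{n}$ from $u$ onto the right-sided object $(J_{\tau,b-}^{n-\varrho,\psi})[v/\psi']$, producing the stated boundary terms as by-products.

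First I would substitute (\ref{EQ7}) and (\ref{EQ3}) into the left-hand side to obtain
\[
\frac{1}{\Gamma(n-\varrho)}\int_a^b v(\tau)\int_a^\tau \psi'(s)\bigl(\psi(\tau)-\psi(s)\bigr)^{n-\varrho-1} u_\psi^{[n]}(s)\,\mathrm d s\,\mathrm d\tau .
\]
A Fubini swap over the triangle $\{a\le s\le\tau\le b\}$, combined with writing $v(\tau)=\psi'(\tau)\cdot(v/\psi')(\tau)$, identifies the new inner integral (over $\tau\in[s,b]$) with $\Gamma(n-\varrho)\,G(s)$, where $G(s):=(J_{s,b-}^{n-\varrho,\psi})[v/\psi'](s)$ by definition (\ref{EQ5}). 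The left-hand side therefore reduces to $\int_a^b \psi'(s)\,u_\psi^{[n]}(s)\,G(s)\,\mathrm d s$.

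Next I exploit the elementary identity $\psi'(s)\,u_\psi^{[k]}(s)=\frac{\mathrm d}{\mathrm d s}u_\psi^{[k-1]}(s)$, valid for $k\ge 1$. Taking $k=n$ and integrating by parts once removes one $\psi$-derivative from $u$, producing the boundary term $\bigl[u_\psi^{[n-1]}(s)\,G(s)\bigr]_a^b$ and an interior integral involving $G'(s)$. Rewriting $G'(s)=-\psi'(s)\bigl(-\frac{1}{\psi'(s)}\frac{\mathrm d}{\mathrm d s}\bigr)G(s)$ restores precisely the original pattern, namely $\int_a^b \psi'(s)\,u_\psi^{[n-1]}(s)\,\bigl(-\frac{1}{\psi'(s)}\frac{\mathrm d}{\mathrm d s}\bigr)G(s)\,\mathrm d s$, now with one fewer $\psi$-derivative on $u$ and the operator $-\frac{1}{\psi'}\frac{\mathrm d}{\mathrm d s}$ applied once to $G$. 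Iterating this move $n$ times in total generates the boundary sum exactly as in the statement, while the surviving bulk integral becomes
\[
\int_a^b u(s)\,\psi'(s)\,\Bigl(-\tfrac{1}{\psi'(s)}\tfrac{\mathrm d}{\mathrm d s}\Bigr)^{n}G(s)\,\mathrm d s
= \int_a^b u(s)\,({}^cD_{s,b-}^{\varrho,\psi})[v/\psi'](s)\,\tfrac{\mathrm d}{\mathrm d s}\psi(s)\,\mathrm d s ,
\]
after invoking (\ref{EQ8}) and $\psi'(s)\,\mathrm d s=\mathrm d\psi(s)$.

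The main obstacle I anticipate is the careful sign and index bookkeeping: each classical integration by parts contributes a $-1$, which is cancelled by the $-1$ extracted from $\frac{\mathrm d}{\mathrm d s}$ whenever the operator $-\frac{1}{\psi'}\frac{\mathrm d}{\mathrm d s}$ is reinstated, and the boundary term collected at the $(k+1)$st step must be indexed by $k$ so that the cumulative sum runs precisely over $k=0,1,\dots,n-1$. A short induction on the number of remaining $\psi$-derivatives on $u$ formalizes the iteration, and the smoothness hypotheses on $u$, $v$, and $\psi$ (with $\psi'>0$) justify both Fubini and each integration by parts.
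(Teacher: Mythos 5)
The paper offers no proof of Lemma \ref{L1}: it is quoted from \cite{Almeida} without argument, so your attempt can only be measured against the statement itself. Your overall route --- unfolding the left $\psi$--Caputo derivative via (\ref{EQ7}) and (\ref{EQ3}), applying Fubini on the triangle $\{a\le s\le\tau\le b\}$ to recognize $G(s)=(J_{s,b-}^{n-\varrho,\psi})[v/\psi']$, and then iterating the classical integration by parts through the identity $\psi'(s)\,u_\psi^{[k]}(s)=\frac{\mathrm{d}}{\mathrm{d}s}u_\psi^{[k-1]}(s)$ --- is the standard and correct way to derive this formula, and your sign and index bookkeeping for the boundary sum is right.

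The genuine flaw is the final identification. After $n$ integrations by parts the bulk term carries the operator $\bigl(-\frac{1}{\psi'}\frac{\mathrm{d}}{\mathrm{d}s}\bigr)^{n}(J_{s,b-}^{n-\varrho,\psi})[v/\psi']$, in which the differential operator acts \emph{after} the fractional integral: by (\ref{EQ6}) this is the right Riemann--Liouville derivative $(D_{s,b-}^{\varrho,\psi})[v/\psi']$, not the right Caputo derivative of (\ref{EQ8}), where the two factors are composed in the opposite order. The two operators differ in general by $D_{b-}^{\varrho,\psi}$ applied to the $\psi$--Taylor polynomial of $v/\psi'$ at $b$; they coincide only when the $\psi$--derivatives of $v/\psi'$ up to order $n-1$ vanish at $b$. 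So ``invoking (\ref{EQ8})'' at that point is not legitimate: what your computation actually establishes is the identity with $D_{\tau,b-}^{\varrho,\psi}$ in place of $^{c}D_{\tau,b-}^{\varrho,\psi}$, which is in fact how the formula reads in \cite{Almeida}; the superscript $c$ on the right-hand derivative appears to be a transcription slip in the paper's statement (as is the interchange of the regularity hypotheses: it is $u$ that must belong to $\mathcal{C}^{n}$ for $^{c}D_{a+,\tau}^{\varrho,\psi}[u]$ and your $u_\psi^{[n]}$ to exist, while enough smoothness of $v/\psi'$ near $b$ is needed to differentiate $G$ $n$ times). Your proof becomes correct once the conclusion is stated with the Riemann--Liouville right derivative, or once you add the vanishing conditions at $b$ under which the two right derivatives agree.
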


\begin{lemma}
\label{L2}Let $\varrho >0$ and $u,\ \psi \in \mathcal{C}([a,b]$. Then$\ $
\begin{equation*}
\left \Vert J_{a+,t}^{\varrho ,\psi }\left[ u\right] \right \Vert _{C}\leq
K_{\psi }\left \Vert u\right \Vert _{C},\ K_{\psi }=\frac{1}{\Gamma \left(
1+\varrho \right) }\left( \psi \left( b\right) -\psi \left( a\right) \right)
^{\varrho }.
\end{equation*}

For all $n-1<\varrho <n$,
\begin{equation*}
\left \Vert ^{c}D_{a+,t}^{\varrho ,\psi }\left[ u\right] \right \Vert
_{C}\leq K_{\psi }\left \Vert u\right \Vert _{C^{\left[ n\right] }\psi },\
K_{\psi }=\frac{1}{\Gamma \left( n+1-\varrho \right) }\left( \psi \left(
b\right) -\psi \left( a\right) \right) ^{n-\varrho }.
\end{equation*}
\end{lemma}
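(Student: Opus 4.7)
The plan is to derive both inequalities directly from the defining integral representations, with the substitution $s=\psi(t)-\psi(\tau)$ being the only real computation.

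For the first inequality, I would start from the definition \eqref{EQ3} and apply the crude pointwise bound $|u(\tau)|\leq \|u\|_C$ to obtain
\begin{equation*}
\left|J_{a+,t}^{\varrho,\psi}[u](t)\right|\leq \frac{\|u\|_C}{\Gamma(\varrho)}\int_a^t \psi'(\tau)\bigl(\psi(t)-\psi(\tau)\bigr)^{\varrho-1}\,\mathrm{d}\tau.
\end{equation*}
Since $\psi$ is monotone increasing with $\psi'>0$, the substitution $s=\psi(t)-\psi(\tau)$ (so $\mathrm{d}s=-\psi'(\tau)\,\mathrm{d}\tau$) transforms the integral into $\int_0^{\psi(t)-\psi(a)} s^{\varrho-1}\mathrm{d}s = (\psi(t)-\psi(a))^\varrho/\varrho$. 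Dividing by $\Gamma(\varrho)$ and using $\varrho\Gamma(\varrho)=\Gamma(\varrho+1)$ yields the pointwise bound $(\psi(t)-\psi(a))^\varrho\|u\|_C/\Gamma(\varrho+1)$. Taking the supremum over $t\in[a,b]$ and using that $t\mapsto \psi(t)-\psi(a)$ is increasing delivers the claimed $K_\psi$.

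For the second inequality, I would invoke the Caputo definition \eqref{EQ7} in the form
\begin{equation*}
\bigl(^{c}D_{a+,t}^{\varrho,\psi}\bigr)[u]=\bigl(J_{a+,t}^{n-\varrho,\psi}\bigr)\!\left[u_\psi^{[n]}\right],
\end{equation*}
which reduces the problem to bounding a $\psi$-fractional integral of order $n-\varrho\in(0,1)$ applied to the $\psi$-derivative $u_\psi^{[n]}=\bigl(\tfrac{1}{\psi'}\tfrac{d}{dt}\bigr)^n u$. Applying the first inequality with $\varrho$ replaced by $n-\varrho$ and $u$ replaced by $u_\psi^{[n]}$ gives
\begin{equation*}
\bigl\|\,^{c}D_{a+,t}^{\varrho,\psi}[u]\bigr\|_C \leq \frac{(\psi(b)-\psi(a))^{n-\varrho}}{\Gamma(n+1-\varrho)}\,\bigl\|u_\psi^{[n]}\bigr\|_C,
\end{equation*}
and since $\|u_\psi^{[n]}\|_C\leq \|u\|_{C^{[n]}\psi}$ by the definition of the $\psi$-differentiability norm, the stated inequality follows.

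There is essentially no real obstacle: the only nontrivial step is the change of variables in the first inequality, which relies on $\psi'>0$ on $[a,b]$ (guaranteed by the hypotheses made in the preliminaries). The second inequality is a direct consequence of the first once the Caputo operator is expanded through \eqref{EQ7}, so the whole argument is a short two-step computation.
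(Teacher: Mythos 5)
The paper states this lemma without any proof, so there is no argument of its own to compare against; your derivation is correct and is the standard one. Both steps check out: the change of variables $s=\psi(t)-\psi(\tau)$ gives $\int_a^t\psi'(\tau)(\psi(t)-\psi(\tau))^{\varrho-1}\,\mathrm{d}\tau=(\psi(t)-\psi(a))^{\varrho}/\varrho$, and writing $^{c}D_{a+,t}^{\varrho,\psi}[u]=J_{a+,t}^{n-\varrho,\psi}\bigl[u_{\psi}^{[n]}\bigr]$ via (\ref{EQ7}) reduces the second estimate to the first with order $n-\varrho$. The only blemish is inherited from the paper's statement rather than your argument: the hypothesis $u\in\mathcal{C}([a,b])$ is insufficient for the second inequality, which tacitly requires $u\in\mathcal{C}^{n}[a,b]$ so that $u_{\psi}^{[n]}$ exists and is continuous, exactly as your use of the norm $\Vert u\Vert_{C^{[n]}\psi}$ presupposes.
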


\begin{lemma}
\label{L3}Let $\varrho ,\varsigma >0$ , consider the functions \cite{Almeida}
\begin{equation*}
\left( J_{a+,t}^{\varrho ,\psi }\right) \left[ \left( \psi \left( \tau
\right) -\psi \left( a\right) \right) ^{\varsigma -1}\right] =\frac{\Gamma
\left( \varsigma \right) }{\Gamma \left( \varrho +\varsigma \right) }\left( \psi
\left( t\right) -\psi \left( a\right) \right) ^{\varrho +\varsigma -1},
\end{equation*}%
\begin{equation*}
\left( J_{a+,t}^{\varrho ,\psi }\right) \left[ 1\right] =\frac{1}{\Gamma
\left( 1+\varrho \right) }\left( \psi \left( t\right) -\psi \left( a\right)
\right) ^{\varrho }
\end{equation*}%
and
\begin{equation*}
\left( ^{c}D_{a+,t}^{\varrho ,\psi }\right) \left[ \left( \psi \left( \tau
\right) -\psi \left( a\right) \right) ^{\varsigma -1}\right] =\frac{\Gamma
\left( \varsigma \right) }{\Gamma \left( \varsigma -\varrho \right) }\left( \psi
\left( t\right) -\psi \left( a\right) \right) ^{\varsigma -\varrho -1},
\end{equation*}%
\begin{equation*}
\left( ^{c}D_{a+,t}^{\varrho ,\psi }\right) \left[ 1\right] =\frac{1}{\Gamma
\left( 1-\varrho \right) }\left( \psi \left( t\right) -\psi \left( a\right)
\right) ^{-\varrho },\text{ }\varrho >0.
\end{equation*}%
Note that
\begin{equation*}
\left( ^{c}D_{a+,t}^{\varrho ,\psi }\right) \left[ \left( \psi \left( \tau
\right) -\psi \left( a\right) \right) ^{k}\right] =0,\ k=0,..,n-1.
\end{equation*}
\end{lemma}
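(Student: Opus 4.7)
The plan is to derive all four identities directly from the definitions; the first one (the $\psi$--integral of a power) carries the essential computation and the rest fall out by specialization or by composition with the appropriate derivative. First I would prove
\begin{equation*}
(J_{a+,t}^{\varrho,\psi})[(\psi(\tau)-\psi(a))^{\varsigma-1}] = \frac{\Gamma(\varsigma)}{\Gamma(\varrho+\varsigma)}(\psi(t)-\psi(a))^{\varrho+\varsigma-1}
\end{equation*}
by unfolding definition (\ref{EQ3}) and performing the substitution $y = \psi(\tau)-\psi(a)$, which is legitimate because $\psi$ is assumed to be a $C^{n}$ increasing function with $\psi'\neq 0$. This substitution absorbs $\psi'(\tau)\,\mathrm{d}\tau$ into $\mathrm{d}y$ and turns the integrand into $(\psi(t)-\psi(a)-y)^{\varrho-1}y^{\varsigma-1}$ on $[0,\psi(t)-\psi(a)]$. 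A further rescaling $y=(\psi(t)-\psi(a))s$ produces the standard Beta integral, and the classical identity $B(\varsigma,\varrho)=\Gamma(\varsigma)\Gamma(\varrho)/\Gamma(\varrho+\varsigma)$ yields the claimed form. The second identity $(J_{a+,t}^{\varrho,\psi})[1]=(\psi(t)-\psi(a))^{\varrho}/\Gamma(1+\varrho)$ is then the special case $\varsigma=1$.

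For the Caputo derivative identity I would use definition (\ref{EQ7}), $(^{c}D_{a+,t}^{\varrho,\psi})[u]=(J_{a+,t}^{n-\varrho,\psi})u_{\psi}^{[n]}$. A short induction on $k$ shows
\begin{equation*}
\left(\frac{1}{\psi'(t)}\frac{\mathrm{d}}{\mathrm{d}t}\right)^{k}(\psi(t)-\psi(a))^{\varsigma-1} = \frac{\Gamma(\varsigma)}{\Gamma(\varsigma-k)}(\psi(t)-\psi(a))^{\varsigma-k-1},
\end{equation*}
because each application of $(1/\psi')(\mathrm{d}/\mathrm{d}t)$ reduces the exponent by one and multiplies by the current exponent. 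Taking $k=n$ and feeding the result into the integral identity already established (with $\varrho$ replaced by $n-\varrho$ and $\varsigma$ replaced by $\varsigma-n$), the intermediate $\Gamma(\varsigma-n)$ factors cancel and the expression collapses to $\Gamma(\varsigma)(\psi(t)-\psi(a))^{\varsigma-\varrho-1}/\Gamma(\varsigma-\varrho)$, as asserted. The formula $(^{c}D_{a+,t}^{\varrho,\psi})[1]$ listed in the statement is the formal $\varsigma=1$ specialization of the same manipulation, with the convention that $1/\Gamma$ at a nonpositive integer is read as zero whenever it occurs, which also reconciles it with the closing note.

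Finally, the remark $(^{c}D_{a+,t}^{\varrho,\psi})[(\psi(\tau)-\psi(a))^{k}]=0$ for $k\in\{0,1,\ldots,n-1\}$ follows from the same derivative computation, since the $n$-fold $\psi$-derivative of $(\psi(t)-\psi(a))^{k}$ carries a prefactor $\prod_{j=0}^{n-1}(k-j)$ that vanishes whenever $k<n$; applying $J_{a+,t}^{n-\varrho,\psi}$ to $0$ then gives $0$. The only mildly delicate step in the whole argument is the opening change of variables and the subsequent Beta-function bookkeeping, but monotonicity of $\psi$ makes this routine; everything else is standard manipulation of Gamma functions, so I do not anticipate any real obstacle.
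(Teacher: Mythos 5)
The paper offers no proof of this lemma at all --- it is imported verbatim from \cite{Almeida} --- so there is nothing to compare routes against; your derivation is the standard one and, for the first three displayed identities, it is essentially sound. The change of variables $y=\psi(\tau)-\psi(a)$, the reduction to the Beta integral, the specialization $\varsigma=1$, the induction giving $\bigl(\tfrac{1}{\psi'}\tfrac{\mathrm{d}}{\mathrm{d}t}\bigr)^{k}(\psi(t)-\psi(a))^{\varsigma-1}=\tfrac{\Gamma(\varsigma)}{\Gamma(\varsigma-k)}(\psi(t)-\psi(a))^{\varsigma-k-1}$, and the closing observation that the prefactor $\prod_{j=0}^{n-1}(k-j)$ kills $(\psi(t)-\psi(a))^{k}$ for $k\leq n-1$ are all correct. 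One condition you should make explicit: when you feed the $n$-fold derivative into the already-proved integral identity ``with $\varsigma$ replaced by $\varsigma-n$,'' the Beta integral only converges when $\varsigma-n>0$. The Caputo power rule therefore requires $\varsigma>n$ (as in Almeida's original lemma), not merely $\varsigma>0$ as the paper's hypothesis suggests; for $0<\varsigma\leq n$ your argument breaks down at exactly that step.

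The genuine gap is your treatment of $\bigl({}^{c}D_{a+,t}^{\varrho,\psi}\bigr)[1]$. If you specialize your own computation honestly to $\varsigma=1$, the $n$-fold $\psi$-derivative of the constant function is identically zero (the prefactor is $\Gamma(1)/\Gamma(1-n)$ with $\Gamma$ at a pole, i.e.\ the function $0$), and applying $J_{a+,t}^{n-\varrho,\psi}$ to $0$ gives $0$ --- which is precisely the $k=0$ case of the closing note. The displayed formula $\tfrac{1}{\Gamma(1-\varrho)}(\psi(t)-\psi(a))^{-\varrho}$ is the Riemann--Liouville derivative of $1$, and for non-integer $\varrho$ the quantity $\Gamma(1-\varrho)$ is finite and nonzero, so that expression does not vanish and cannot be ``reconciled'' with the note by any convention about $1/\Gamma$ at nonpositive integers: no such evaluation occurs. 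The two readings you offer (cancel the $\Gamma(\varsigma-n)$ factors formally, versus read $1/\Gamma(1-n)=0$ first) give different answers, and only the second is a computation; the first is not. The correct resolution is that the fourth displayed identity is simply wrong for the Caputo operator as defined in (\ref{EQ7}) --- the Caputo derivative of a constant is $0$ --- and your proof should say this rather than dress the inconsistency up as a convention.
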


The subsequent properties are valid: if $%
\varrho ,\varsigma >0$, then%
\begin{equation}
\left( J_{a+,t}^{\varrho ,\psi }\right) \left( J_{a+,t}^{\varsigma ,\psi }\right)
\left[ u\right] =\left( J_{a+,t}^{\varrho +\varsigma ,\psi }\right) \left[ u%
\right] \text{ and\ }\left( ^{c}D_{a+,t}^{\varrho ,\psi }\right) \left(
^{c}D_{a+,t}^{\varsigma ,\psi }\right) \left[ u\right] =\left(
^{c}D_{a+,t}^{\varrho +\varsigma ,\psi }\right) \left[ u\right] ,  \label{EQ11}
\end{equation}%
\begin{equation}
\left( ^{c}D_{a+,t}^{\varrho ,\psi }\right) \left( J_{a+,t}^{\varsigma ,\psi
}\right) \left[ u\right] =\left( J_{a+,t}^{\varsigma -\varrho ,\psi }\right) %
\left[ u\right] \text{.}  \label{EQ12}
\end{equation}

\begin{lemma}
\label{L4}Given a function $u\in \mathcal{C}^{n}[a,b]$ and $\varrho >0$, we
have
\begin{equation*}
J_{a+,t}^{\varrho ,\psi }\left( ^{c}D_{a+,t}^{\varrho ,\psi }\right) \left[ u%
\right] =u\left( t\right) -\sum_{j=0}^{n-1}\left[ \frac{1}{j!}\left( \frac{1%
}{\psi ^{\prime }\left( t\right) }\frac{\mathrm{d}}{\mathrm{d}t}\right)
^{j}u\left( a\right) \right] \left( \psi \left( t\right) -\psi \left(
a\right) \right) ^{j}.
\end{equation*}%
In particular, given $\varrho \in (0,1)$, we have
\begin{equation*}
J_{a+,t}^{\varrho ,\psi }\left( ^{c}D_{a+,t}^{\varrho ,\psi }\right) \left[ u%
\right] =u\left( t\right) -u\left( a\right) .
\end{equation*}
\end{lemma}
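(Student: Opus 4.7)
The plan is to reduce the claim to the classical Taylor formula with integral remainder via the change of variables $s=\psi(\tau)$, after first collapsing $J^{\varrho,\psi}\circ {}^c D^{\varrho,\psi}$ to $J^{n,\psi}$ followed by the $n$-fold $\psi$-derivative.

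First I would combine the definition (\ref{EQ7}) of the $\psi$-Caputo derivative with the semigroup identity (\ref{EQ11}) for $\psi$-Riemann--Liouville integrals to write
\begin{equation*}
J_{a+,t}^{\varrho,\psi}\,\bigl({}^cD_{a+,t}^{\varrho,\psi}\bigr)[u]
= J_{a+,t}^{\varrho,\psi}\,J_{a+,t}^{n-\varrho,\psi}\,u_\psi^{[n]}
= J_{a+,t}^{n,\psi}\,u_\psi^{[n]}.
\end{equation*}
So the whole lemma reduces to showing that
\begin{equation*}
J_{a+,t}^{n,\psi}\,u_\psi^{[n]}(t)
= u(t) - \sum_{j=0}^{n-1}\frac{u_\psi^{[j]}(a)}{j!}\bigl(\psi(t)-\psi(a)\bigr)^{j},
\end{equation*}
which is a pure Taylor-type statement with no fractional orders left.

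Next I would perform the substitution $s=\psi(\tau)$, $ds=\psi'(\tau)\,d\tau$, in
\begin{equation*}
J_{a+,t}^{n,\psi}\,u_\psi^{[n]}(t)
= \frac{1}{\Gamma(n)}\int_{a}^{t}\psi'(\tau)\bigl(\psi(t)-\psi(\tau)\bigr)^{n-1}u_\psi^{[n]}(\tau)\,d\tau.
\end{equation*}
Setting $v(s):=u(\psi^{-1}(s))$, the chain rule applied $k$ times gives $u_\psi^{[k]}(\tau)=v^{(k)}(\psi(\tau))$; this is essentially the whole point of the operator $\frac{1}{\psi'}\frac{d}{dt}$. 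The integral thereby transforms into the ordinary Riemann--Liouville integral of order $n$ of $v^{(n)}$ from $\psi(a)$ to $\psi(t)$:
\begin{equation*}
J_{a+,t}^{n,\psi}\,u_\psi^{[n]}(t)
= \frac{1}{\Gamma(n)}\int_{\psi(a)}^{\psi(t)}\bigl(\psi(t)-s\bigr)^{n-1}v^{(n)}(s)\,ds.
\end{equation*}

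Finally I would invoke the classical Taylor expansion with integral remainder (equivalently, $n$ successive integrations by parts) applied to the smooth function $v\in\mathcal{C}^{n}$ on $[\psi(a),\psi(t)]$:
\begin{equation*}
\frac{1}{\Gamma(n)}\int_{\psi(a)}^{\psi(t)}\bigl(\psi(t)-s\bigr)^{n-1}v^{(n)}(s)\,ds
= v(\psi(t)) - \sum_{j=0}^{n-1}\frac{v^{(j)}(\psi(a))}{j!}\bigl(\psi(t)-\psi(a)\bigr)^{j}.
\end{equation*}
Substituting back $v(\psi(t))=u(t)$ and $v^{(j)}(\psi(a))=u_\psi^{[j]}(a)$ yields the desired identity, and the particular case $\varrho\in(0,1)$ follows because then $n=1$ and the sum reduces to $u(a)$. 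The only real work is the bookkeeping in the change of variables and verifying $u_\psi^{[k]}=v^{(k)}\circ\psi$; everything else is either a citation of (\ref{EQ11}) or the ordinary Taylor formula. I do not expect any genuine obstacle beyond ensuring that $u\in\mathcal{C}^{n}[a,b]$ and $\psi\in\mathcal{C}^{n}[a,b]$ with $\psi'>0$ guarantee $v\in\mathcal{C}^{n}[\psi(a),\psi(b)]$, which is immediate from the inverse function theorem.
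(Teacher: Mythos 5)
Your proof is correct. Note, however, that the paper itself offers no proof of Lemma \ref{L4}: it is quoted as a known property of the $\psi$--Caputo derivative (it is essentially Theorem 4 of Almeida \cite{Almeida}), so there is no in-paper argument to compare against. Your route --- collapsing $J_{a+,t}^{\varrho,\psi}\,{}^{c}D_{a+,t}^{\varrho,\psi}$ to $J_{a+,t}^{n,\psi}u_{\psi}^{[n]}$ via (\ref{EQ7}) and (\ref{EQ11}), then transporting everything to the classical setting by $s=\psi(\tau)$, $v=u\circ\psi^{-1}$, and invoking Taylor's formula with integral remainder --- is clean and complete; the identity $u_{\psi}^{[k]}=v^{(k)}\circ\psi$ is the only computation that needs checking and you verify it correctly, and the regularity $v\in\mathcal{C}^{n}$ does follow from $u,\psi\in\mathcal{C}^{n}$ with $\psi'\neq 0$. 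The standard proof in the literature instead runs the $n$ integrations by parts directly in the variable $\tau$ (or argues by induction on $n$) without ever leaving the $\psi$--weighted setting; the two are equivalent, but your reduction to the ordinary Riemann--Liouville case makes transparent \emph{why} the lemma holds, namely that the whole $\psi$--calculus is conjugate to the classical one under the change of variable $s=\psi(\tau)$. One cosmetic caveat: the semigroup identity (\ref{EQ11}) requires $n-\varrho>0$, which holds since $n=[\varrho]+1$, so your first step is legitimate for every $\varrho>0$.
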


\begin{lemma}
\label{L5}Given a function $u\in \mathcal{C}^{n}[a,b]$ and $1>\varrho >0$, we
have
\begin{equation*}
\left\Vert \left( J_{a+,t_{2}}^{\varrho ,\psi }\right) \left[ u\right]
-\left( J_{a+,t_{1}}^{\varrho ,\psi }\right) \left[ u\right] \right\Vert \leq
\frac{2\left\Vert u\right\Vert _{\infty }}{\Gamma \left( \varrho +1\right) }%
\left( \psi \left( t_{2}\right) -\psi \left( t_{1}\right) \right) ^{\varrho }.
\end{equation*}
\end{lemma}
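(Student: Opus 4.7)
The plan is to bound the difference directly from the integral representation of the $\psi$--fractional integral, splitting the interval of integration so that the two kernels can be compared. Without loss of generality I assume $a \le t_1 < t_2 \le b$. Writing out the definition (\ref{EQ3}) and then decomposing
\[
(J_{a+,t_2}^{\varrho,\psi}-J_{a+,t_1}^{\varrho,\psi})[u]
=\frac{1}{\Gamma(\varrho)}\int_{a}^{t_1}\psi'(\tau)\bigl[(\psi(t_2)-\psi(\tau))^{\varrho-1}-(\psi(t_1)-\psi(\tau))^{\varrho-1}\bigr]u(\tau)\,\mathrm{d}\tau
+\frac{1}{\Gamma(\varrho)}\int_{t_1}^{t_2}\psi'(\tau)(\psi(t_2)-\psi(\tau))^{\varrho-1}u(\tau)\,\mathrm{d}\tau,
\]
I would then estimate the two pieces separately using $|u(\tau)|\le\|u\|_\infty$.

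For the second integral, a direct substitution $s=\psi(t_2)-\psi(\tau)$ gives the bound $\|u\|_\infty(\psi(t_2)-\psi(t_1))^\varrho/\Gamma(\varrho+1)$; equivalently this is Lemma \ref{L3} applied on the subinterval $[t_1,t_2]$. For the first integral, because $0<\varrho<1$ the kernel difference $(\psi(t_2)-\psi(\tau))^{\varrho-1}-(\psi(t_1)-\psi(\tau))^{\varrho-1}$ is nonpositive, so taking absolute values and pulling out $\|u\|_\infty$ leaves
\[
\frac{\|u\|_\infty}{\Gamma(\varrho)}\int_{a}^{t_1}\psi'(\tau)\bigl[(\psi(t_1)-\psi(\tau))^{\varrho-1}-(\psi(t_2)-\psi(\tau))^{\varrho-1}\bigr]\mathrm{d}\tau.
\]
Two substitutions (first $s=\psi(t_1)-\psi(\tau)$, then $s=\psi(t_2)-\psi(\tau)$) evaluate this integral exactly to
\[
\frac{1}{\varrho}\bigl[(\psi(t_1)-\psi(a))^\varrho-(\psi(t_2)-\psi(a))^\varrho+(\psi(t_2)-\psi(t_1))^\varrho\bigr].
\]

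The crux of the argument, and really the only nontrivial step, is to bound this last bracket by $(\psi(t_2)-\psi(t_1))^\varrho$. I would invoke the elementary subadditivity of $x\mapsto x^\varrho$ for $\varrho\in(0,1)$: since $\psi(t_2)-\psi(a)\ge\psi(t_1)-\psi(a)$, the first two terms already sum to a nonpositive quantity, so the entire bracket is at most $(\psi(t_2)-\psi(t_1))^\varrho$. Thus the first piece is bounded by $\|u\|_\infty(\psi(t_2)-\psi(t_1))^\varrho/\Gamma(\varrho+1)$. Adding the bounds for the two pieces yields the factor $2$ in the claimed estimate, completing the proof.

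One minor remark: the hypothesis $u\in\mathcal{C}^n[a,b]$ is stronger than necessary (continuity and $\|u\|_\infty<\infty$ suffice), but this does not affect the argument. The monotonicity assumption on $\psi$ already stated in Section 2 is used implicitly to ensure $\psi(t_2)-\psi(\tau)\ge\psi(t_1)-\psi(\tau)\ge 0$ on $[a,t_1]$ and to justify the substitutions.
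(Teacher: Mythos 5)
Your proposal is correct and follows essentially the same route as the paper: the same splitting of the difference into an integral over $[a,t_1]$ with the kernel difference plus an integral over $[t_1,t_2]$, the same exact evaluation of each piece, and the same observation that $(\psi(t_1)-\psi(a))^{\varrho}-(\psi(t_2)-\psi(a))^{\varrho}\leq 0$ to absorb the bracket into $(\psi(t_2)-\psi(t_1))^{\varrho}$, yielding the factor $2$. You have in fact written the argument more carefully than the paper, whose displayed first integral has a typographical slip in the upper limit ($t_2$ where $t_1$ is meant) that your version corrects.
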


\begin{proof}
	\begin{eqnarray*}
		\left \vert \left( J_{a+,t_{2}}^{\varrho ,\psi }\right) \left[ u\right]
		-\left( J_{a+,t_{1}}^{\varrho ,\psi }\right) \left[ u\right] \right \vert &=&%
		\frac{1}{\Gamma \left( \varrho \right) }\left \vert \int_{a}^{t_{2}}\psi
		^{\prime }\left( \tau \right) \left[ \left( \psi \left( t_{2}\right) -\psi
		\left( \tau \right) \right) ^{\varrho -1}-\left( \psi \left( t_{1}\right)
		-\psi \left( \tau \right) \right) ^{\varrho -1}\right] u\left( \tau \right)
		\mathrm{d}\tau \right \vert \\
		&&+\frac{1}{\Gamma \left( \varrho \right) }\left \vert
		\int_{t_{1}}^{t_{2}}\psi ^{\prime }\left( \tau \right) \left( \psi \left(
		t_{2}\right) -\psi \left( \tau \right) \right) ^{\varrho -1}u\left( \tau
		\right) \mathrm{d}\tau \right \vert \\
		&\leq &\frac{\left \Vert u\right \Vert _{\infty }}{\Gamma \left( \varrho
			+1\right) }\left[ \left( \psi \left( t_{2}\right) -\psi \left( t_{1}\right)
		\right) ^{\varrho }+\left( \psi \left( t_{1}\right) -\psi \left( a\right)
		\right) ^{\varrho }-\left( \psi \left( t_{2}\right) -\psi \left( a\right)
		\right) ^{\varrho }\right] \\
		&&+\frac{\left \Vert u\right \Vert _{\infty }}{\Gamma \left( \varrho
			+1\right) }\left( \psi \left( t_{2}\right) -\psi \left( t_{1}\right) \right)
		^{\varrho } \\
		&\leq &\frac{2\left \Vert u\right \Vert _{\infty }}{\Gamma \left( \varrho
			+1\right) }\left( \psi \left( t_{2}\right) -\psi \left( t_{1}\right) \right)
		^{\varrho }.
	\end{eqnarray*}
\end{proof}

\begin{theorem}
\cite{Smart} \label{T1}Let $\mathcal{B}_{r}$ be the closed ball of radius $%
r>0$, centred at zero, in a Banach space $X$ with $\Upsilon :\mathcal{B}
_{r}\rightarrow X$ a contraction and $\Upsilon (\partial \mathcal{\ B}
_{r})\subseteq \mathcal{B}_{r}$. Then, $\Upsilon $ has a unique fixed point
in $\mathcal{B}_{r}$.
\end{theorem}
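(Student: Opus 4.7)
This is immediate: if $x^*, y^* \in \mathcal{B}_r$ both satisfy $\Upsilon(w)=w$, then $\|x^* - y^*\| = \|\Upsilon(x^*) - \Upsilon(y^*)\| \leq k \|x^* - y^*\|$ with $k < 1$ forces $x^* = y^*$.

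\textbf{Existence.} The subtle point is that $\Upsilon$ is not assumed to send $\mathcal{B}_r$ into itself; for $x$ in the interior and $y = rx/\|x\| \in \partial \mathcal{B}_r$, the best estimate from contractivity together with $\|\Upsilon(y)\|\le r$ is $\|\Upsilon(x)\| \leq r + k(r - \|x\|)$, which may exceed $r$, so the classical Banach principle does not apply on the nose. I would instead run a homotopy/continuation argument. Set $\Upsilon_\lambda := \lambda \Upsilon$ for $\lambda \in [0, 1]$; each $\Upsilon_\lambda$ is a contraction of constant $\lambda k$, and the crude bound $\|\Upsilon_\lambda(x)\| \leq \lambda(\|\Upsilon(0)\| + kr)$ shows that for $\lambda$ small enough, $\Upsilon_\lambda$ sends $\mathcal{B}_r$ into itself and thus has a unique fixed point $x_\lambda \in \mathcal{B}_r$ by Banach. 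Let $I := \{\lambda \in [0,1] : \Upsilon_\lambda \text{ has a fixed point in } \mathcal{B}_r\}$; the plan is to conclude $I = [0,1]$ via the connectedness of $[0,1]$, so that $1 \in I$ supplies the required fixed point of $\Upsilon = \Upsilon_1$.

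\textbf{Use of the boundary hypothesis and the main obstacle.} The boundary condition $\Upsilon(\partial \mathcal{B}_r) \subseteq \mathcal{B}_r$ enters here: for any $\lambda < 1$ and any fixed point $x_\lambda$ of $\Upsilon_\lambda$, the equation $\lambda \Upsilon(x_\lambda) = x_\lambda$ together with $\|x_\lambda\| = r$ would force $\|\Upsilon(x_\lambda)\| = r/\lambda > r$, contradicting the hypothesis. Hence fixed points for $\lambda \in I \cap [0,1)$ lie strictly inside the ball, which is the essential ingredient for proving openness of $I$ via a perturbed Picard iteration starting at $x_{\lambda_0}$ (iterates stay inside $\mathcal{B}_r$ for $\lambda$ close to $\lambda_0$ because $\|x_{\lambda_0}\|<r$ strictly). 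Closedness of $I$ comes from the standard Cauchy estimate $(1 - \lambda_n k)\|x_{\lambda_n} - x_{\lambda_m}\| \leq |\lambda_n - \lambda_m|\,\|\Upsilon(x_{\lambda_m})\|$, where $\|\Upsilon(x_{\lambda_m})\|$ is uniformly bounded over $\mathcal{B}_r$, so the fixed points converge to a limit still in the closed set $\mathcal{B}_r$. The hard part will be precisely this open/closed dichotomy on $I$ and the verification that the boundary hypothesis is exactly what blocks $x_\lambda$ from crossing $\partial \mathcal{B}_r$ prematurely; once these are in place, connectedness forces $I = [0,1]$ and the existence follows.
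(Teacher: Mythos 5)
This theorem is imported by citation from Smart's book; the paper supplies no proof of its own, so there is nothing internal to compare your argument against. On its own merits, your proposal is correct and is essentially the standard continuation-method proof of this ``Banach principle with a boundary condition.'' Your uniqueness argument is the usual one-liner. For existence, your key observations are all sound: the naive bound $\|\Upsilon(x)\|\leq r+k(r-\|x\|)$ shows $\Upsilon$ need not be a self-map, so Banach cannot be applied directly (and the radial-retraction trick fails in a general Banach space, where the retraction is only $2$-Lipschitz); the set $I=\{\lambda:\lambda\Upsilon\ \text{has a fixed point in}\ \mathcal{B}_r\}$ contains a neighbourhood of $0$; closedness follows from the estimate $(1-\lambda_n k)\|x_{\lambda_n}-x_{\lambda_m}\|\leq |\lambda_n-\lambda_m|\,\|\Upsilon(x_{\lambda_m})\|$ with $\|\Upsilon(x_{\lambda_m})\|\leq\|\Upsilon(0)\|+kr$; and the boundary hypothesis is used exactly where it must be, since $x_\lambda=\lambda\Upsilon(x_\lambda)$ with $\|x_\lambda\|=r$ and $\lambda<1$ would give $\|\Upsilon(x_\lambda)\|=r/\lambda>r$. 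The only step you leave as a sketch is openness, but the sketch closes: starting the Picard iteration $y_{n+1}=\lambda\Upsilon(y_n)$ at $y_0=x_{\lambda_0}$, one gets $\|y_n-y_0\|\leq |\lambda-\lambda_0|\,\|\Upsilon(x_{\lambda_0})\|/(1-k)$ by induction, so for $|\lambda-\lambda_0|<(r-\|x_{\lambda_0}\|)(1-k)/(\|\Upsilon(0)\|+kr)$ all iterates remain in $\mathcal{B}_r$ (where $\Upsilon$ is defined) and converge to a fixed point of $\lambda\Upsilon$. With nonemptiness, openness in $[0,1)$ and closedness in hand, the supremum of $I$ must equal $1$, which is your conclusion. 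So the argument is complete in all essentials; to turn it into a written proof you need only spell out the induction in the openness step.
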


\begin{theorem}
\cite{Smart} \label{T2}(Let $\mathcal{M}$ be a closed, convex, non-empty
subset of a Banach space $X\times X$. Suppose that $\mathbb{E}$ and $\mathbb{%
\ F}$ map $\mathcal{M}$ into $X$ and that

\begin{enumerate}
\item[(i)] $\mathbb{E}u+\mathbb{F}v\in \mathcal{M}$ for all $u,v\in \mathcal{%
M}$;

\item[(ii)] $\mathbb{E}$ is compact and continuous;

\item[(iii)] $\mathbb{F}$ is a contraction mapping.
\end{enumerate}
Then the operator equation $\mathbb{E}w+\mathbb{F}w=w$ has at least one
solution on $\mathcal{M}$.

\end{theorem}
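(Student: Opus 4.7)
The proof strategy I would follow is to combine the Banach contraction principle (Theorem \ref{T1}) with Schauder's fixed point theorem through an implicit construction. For each fixed $y\in\mathcal{M}$, define the auxiliary map $T_{y}:\mathcal{M}\to X$ by $T_{y}(x)=\mathbb{F}x+\mathbb{E}y$. Hypothesis (i) ensures $T_{y}(\mathcal{M})\subseteq\mathcal{M}$, and since $\mathbb{F}$ is a contraction with some constant $k<1$ by (iii), so is $T_{y}$. Applying Theorem \ref{T1} on the complete metric space $\mathcal{M}$ (a closed subset of a Banach space), I obtain a unique $B(y)\in\mathcal{M}$ satisfying $B(y)=\mathbb{F}(B(y))+\mathbb{E}y$. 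This produces a well-defined operator $B:\mathcal{M}\to\mathcal{M}$.

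The second step is to show $B$ is continuous and has relatively compact image. From the defining identity, for any $y,y'\in\mathcal{M}$,
\[
\|B(y)-B(y')\|\leq\|\mathbb{F}(B(y))-\mathbb{F}(B(y'))\|+\|\mathbb{E}y-\mathbb{E}y'\|\leq k\|B(y)-B(y')\|+\|\mathbb{E}y-\mathbb{E}y'\|,
\]
which rearranges to $\|B(y)-B(y')\|\leq(1-k)^{-1}\|\mathbb{E}y-\mathbb{E}y'\|$. Continuity of $\mathbb{E}$ then immediately yields continuity of $B$. For relative compactness, given any sequence $\{y_{n}\}\subset\mathcal{M}$, compactness of $\mathbb{E}$ lets me extract a convergent (hence Cauchy) subsequence $\{\mathbb{E}y_{n_{j}}\}$, and the Lipschitz bound above forces $\{B(y_{n_{j}})\}$ to be Cauchy, therefore convergent in the Banach space $X$. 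Hence $B(\mathcal{M})$ is relatively compact.

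Since $\mathcal{M}$ is closed, convex, and nonempty, and $B:\mathcal{M}\to\mathcal{M}$ is continuous with $\overline{B(\mathcal{M})}$ compact, Schauder's fixed point theorem produces some $y^{*}\in\mathcal{M}$ with $B(y^{*})=y^{*}$. Unwinding the defining identity, $y^{*}=\mathbb{F}(y^{*})+\mathbb{E}y^{*}=(\mathbb{E}+\mathbb{F})y^{*}$, so $y^{*}$ is the sought fixed point.

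The main obstacle I anticipate is the compactness step: the operator $B$ is defined only implicitly as the fixed point of $T_{y}$, so its regularity must be deduced indirectly. The Lipschitz estimate $\|B(y)-B(y')\|\leq(1-k)^{-1}\|\mathbb{E}y-\mathbb{E}y'\|$ is the crucial bridge that transfers both continuity and compactness from $\mathbb{E}$ to $B$, and its derivation hinges on writing $B(y)-B(y')$ exactly via the two fixed-point identities and absorbing the contraction term to the left-hand side. Once this inequality is secured, the remainder of the argument is routine, with Schauder's theorem supplying the final fixed point.
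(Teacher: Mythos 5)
Your proof is correct, and it is essentially the classical argument: the paper itself offers no proof of this statement, since it is Krasnoselskii's fixed point theorem quoted verbatim from the cited reference \cite{Smart}, where exactly your construction (the implicit solution operator $B$ defined by $B(y)=\mathbb{F}(B(y))+\mathbb{E}y$, the Lipschitz estimate $\|B(y)-B(y')\|\leq(1-k)^{-1}\|\mathbb{E}y-\mathbb{E}y'\|$, and an appeal to Schauder) is carried out. The only cosmetic caveat is that Theorem \ref{T1} as stated in the paper concerns a contraction on a closed ball centred at zero with $\Upsilon(\partial\mathcal{B}_{r})\subseteq\mathcal{B}_{r}$, whereas what you actually invoke is the general Banach contraction principle on the complete metric space $\mathcal{M}$; that is the right tool and the step is sound, but you should cite the contraction principle itself rather than Theorem \ref{T1}.
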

\begin{definition}
\label{D1}The problem (\ref{EQ1}) is U-H stable if there exists a
real number $c_{f}>0$ such that for each $\epsilon >0$ and for each solution
$\tilde{u}\in \mathcal{C}\left[ a,T\right] $ of the inequality
\begin{equation}
\left\vert \left( ^{c}D_{a+,t}^{\varrho ,\psi }\right) \left(
^{c}D_{a+,t}^{\varsigma ,\psi }+\lambda \right) \left[ \tilde{u}\right]
-f(t,u(t),^{c}D_{a+,t}^{\delta ,\psi }\left[ \tilde{u}\right] )\right\vert
\leq \epsilon ,\ t\in \left[ a,T\right] ,  \label{EQ13}
\end{equation}%
there exists a solution $u\in \mathcal{C}\left[ a,T\right] $ of the problem (%
\ref{EQ1}) with
\begin{equation*}
\left\vert \tilde{u}(t)-u(t)\right\vert \leq \epsilon c_{f}.
\end{equation*}
\end{definition}

\begin{definition}
\label{D2}The problem (\ref{EQ1}) is generalized U-H stable if there
exists $\Phi \left( t\right) \in C\left(
\mathbb{R}
^{+},%
\mathbb{R}
^{+}\right)$,  $\Phi \left( 0\right) =0$ such that for each $\epsilon >0$ and
for each solution $\tilde{u}\in \mathcal{C}\left[ a,T\right] $ of inequality
(\ref{EQ13}), there exists a solution $u\in \mathcal{C}\left[ a,T\right] $
of problem (\ref{EQ1}) with
\begin{equation*}
\left\vert \tilde{u}(t)-u(t)\right\vert \leq \Phi \left( \epsilon \right) ,\
t\in \left[ a,T\right] ,
\end{equation*}%
where $\Phi \left( \epsilon \right) $ is only dependent on $\epsilon $.
\end{definition}

\begin{definition}
\label{D3}The problem (\ref{EQ1}) is U-H-R stable if there
exists a real number $c_{f}>0$ such that for each $\epsilon >0$ and for each
solution $\tilde{u}\in \mathcal{C}\left[ a,T\right] $ of the inequality
\begin{equation}
\left\vert \left( ^{c}D_{a+,t}^{\varrho ,\psi }\right) \left(
^{c}D_{a+,t}^{\varsigma ,\psi }+\lambda \right) \left[ \tilde{u}\right]
-f(t,u(t),^{c}D_{a+,t}^{\delta ,\psi }\left[ \tilde{u}\right] )\right\vert
\leq \epsilon \Phi \left( t\right) ,\ t\in \left[ a,T\right] ,  \label{EQ14}
\end{equation}%
there exists a solution $u\in \mathcal{C}\left[ a,T\right] $ of the problem (%
\ref{EQ1}) with
\begin{equation*}
\left\vert \tilde{u}(t)-u(t)\right\vert \leq \epsilon c_{f}\Phi \left(
t\right) .
\end{equation*}
\end{definition}

\begin{definition}
\label{D4}The problem (\ref{EQ1}) is generalized U-H-R stable
with respect to $\Phi $ if there exists $c_{f}>0$ such that for each
solution $\tilde{u}\in \mathcal{C}\left[ a,T\right] $ of the inequality
\begin{equation}
\left\vert ^{c}D_{a+,t}^{\varrho ,\psi }\left( ^{c}D_{a+,t}^{\varsigma ,\psi
}+\lambda \right) \left[ u\right] -f(t,u(t),^{c}D_{a+,t}^{\delta ,\psi }%
\left[ u\right] )\right\vert \leq \Phi \left( t\right) ,\ t\in \left[ a,T%
\right] ,  \label{EQ15}
\end{equation}%
there exists a solution $u\in \mathcal{C}\left[ a,T\right] $ of the problem (%
\ref{EQ1}) with
\begin{equation*}
\left\vert \tilde{u}(t)-u(t)\right\vert \leq c_{f}\Phi \left( t\right) .
\end{equation*}
\end{definition}

Below, we generalize Gronwall's inequality for $\psi $-fractional
derivative proved by Shi-you Lin in \cite{Shi}.

\begin{lemma}
\label{L6}Let $u,v$ be two integrable functions, with domain $[a,b]$ . Let $%
\psi \in \mathcal{C}^{1}[a,b]$ an increasing function such that $\psi
^{\prime }\left( t\right) \neq 0$, $\forall t\in \lbrack a,b]$. Assume that

\begin{enumerate}
\item[(i)] $u$ and $v$ are nonnegative;

\item[(ii)] The functions $\left( g_{i}\right) _{i=1,..n}\ $are the bounded
and monotonic increasing functions on $[a,b]$

\item[(iii)] The constants $\varrho _{i}>0\ \left( i=1,2,...,n\right) .\ $If
\begin{equation*}
u\left( t\right) \leq v\left( t\right) +\sum_{i=1}^{n}g_{i}\left( t\right)
\int_{a}^{t}\psi ^{\prime }\left( \tau \right) \left( \psi \left( t\right)
-\psi \left( \tau \right) \right) ^{\varrho _{i}-1}u\left( \tau \right)
\mathrm{d}\tau ,
\end{equation*}
then
\begin{equation*}
u\left( t\right) \leq v\left( t\right) +\sum_{k=1}^{\infty }\left(
\sum_{1^{\prime },2^{\prime },3^{\prime },...k^{\prime }=1}^{n}\frac{\prod
\limits_{i=1}^{k}\left( g_{i^{\prime }}\left( t\right) \Gamma \left( \varrho
_{i^{\prime }}\right) \right) }{\Gamma \left( \sum_{i=1}^{k}\varrho
_{i^{\prime }}\right) }\int_{a}^{t}\left[ \psi ^{\prime }\left( \tau \right)
\left( \psi \left( t\right) -\psi \left( \tau \right) \right)
^{\sum_{i=1}^{k}\varrho _{i^{\prime }}-1}\right] v\left( \tau \right) \mathrm{%
\ d}\tau \right) .
\end{equation*}
\end{enumerate}
\end{lemma}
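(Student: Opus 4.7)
Introduce the linear integral operator
\[B[\phi](t) = \sum_{i=1}^{n} g_i(t)\int_a^t \psi'(\tau)\bigl(\psi(t)-\psi(\tau)\bigr)^{\varrho_i-1}\phi(\tau)\,d\tau,\]
so that the hypothesis becomes $u(t)\le v(t)+B[u](t)$. Because each $g_i$ is (implicitly) nonnegative and the fractional kernel is nonnegative on $[a,t]$, $B$ preserves inequalities between nonnegative functions. A straightforward induction then yields, for every $m\ge 1$,
\[u(t)\;\le\;\sum_{k=0}^{m-1}B^{k}[v](t)\;+\;B^{m}[u](t).\]

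Next I would derive an explicit bound for $B^{k}[v]$. The central computational tool is the $\psi$-analogue of the beta integral,
\[\int_s^t \psi'(\tau)\bigl(\psi(t)-\psi(\tau)\bigr)^{\alpha-1}\bigl(\psi(\tau)-\psi(s)\bigr)^{\beta-1}d\tau \;=\; \frac{\Gamma(\alpha)\Gamma(\beta)}{\Gamma(\alpha+\beta)}\bigl(\psi(t)-\psi(s)\bigr)^{\alpha+\beta-1},\]
obtained via the change of variable $r=(\psi(\tau)-\psi(s))/(\psi(t)-\psi(s))$. I would argue by induction on $k$: when expanding $B^{k+1}[v]=B[B^{k}[v]]$ one meets an inner factor $g_{i_j'}(\tau)$ evaluated at an intermediate time $\tau\le t$, and monotonicity of $g_{i_j'}$ furnishes $g_{i_j'}(\tau)\le g_{i_j'}(t)$, letting it be pulled outside. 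Fubini then swaps the innermost two integrations and the beta identity collapses the resulting double integral into a single weighted integral against $v$ with exponent $\sum_j \varrho_{i_j'}-1$. Collecting all multi-indices $(i_1',\ldots,i_k')\in\{1,\ldots,n\}^k$ reproduces exactly the $k$-th summand of the series in the statement.

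It remains to pass to the limit $m\to\infty$. Using $\int_a^t \psi'(\tau)(\psi(t)-\psi(\tau))^{\sigma-1}d\tau = (\psi(t)-\psi(a))^{\sigma}/\sigma$ and setting $G=\max_i\sup_{[a,b]} g_i$ and $\underline{\varrho}=\min_i \varrho_i$, a term-by-term estimate together with the fact that the outer series has at most $n^{m}$ contributions gives, for all sufficiently large $m$,
\[0\;\le\;B^{m}[u](t)\;\le\;\frac{\|u\|_\infty\, C^{m}}{\Gamma(1+m\underline{\varrho})},\]
with $C$ independent of $m$ (absorbing $n$, $G$, $\max_i \Gamma(\varrho_i)$ and $\psi(b)-\psi(a)$). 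Since $\Gamma$ grows super-exponentially, $B^{m}[u]\to 0$ uniformly on $[a,b]$, and letting $m\to\infty$ in the iterated inequality yields the claimed series bound.

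The main obstacle is the inductive step producing the closed form of $B^{k}[v]$: it requires careful multi-index bookkeeping together with $k-1$ successive applications of Fubini and of the beta identity, so that the nested exponents $\varrho_{i_1'},\ldots,\varrho_{i_k'}$ telescope into the single exponent $\sum_j \varrho_{i_j'}$ that appears in the conclusion. The monotonicity hypothesis on each $g_i$ (and not mere boundedness) is exactly what frees those factors from the interior integrals, while nonnegativity of $u$, $v$ and $g_i$ is what preserves the inequality direction through each iteration.
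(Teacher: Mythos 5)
Your proposal is correct. Note that the paper itself supplies no proof of Lemma \ref{L6}: it is quoted from the cited reference of Shi-you Lin, and the argument there is precisely the iteration scheme you describe (monotone operator $B$, the inequality $u\leq\sum_{k=0}^{m-1}B^{k}[v]+B^{m}[u]$, the $\psi$-beta identity plus Fubini to collapse $B^{k}[v]$ into the multi-index sum, and the $\Gamma(1+m\underline{\varrho})$ decay to kill the remainder). Your two side remarks are well placed: nonnegativity of the $g_{i}$ must indeed be read into hypothesis (ii) for the operator to be order-preserving, and the remainder estimate as you wrote it uses $\Vert u\Vert_{\infty}$, so for merely integrable $u$ one should instead bound $(\psi(t)-\psi(\tau))^{\sigma_{k}-1}$ by a constant once $\sigma_{k}\geq 1$ and keep $\int_{a}^{t}\psi'(\tau)u(\tau)\,\mathrm{d}\tau$ inside; this is a cosmetic adjustment, not a gap.
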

\begin{remark}
\label{R1}For $n=2$ in the hypotheses of Lemma \ref%
{L6}.  Let $v(t)$ be a nondecreasing function on $a\leq t<T$. Then we have
\begin{equation*}
u\left( t\right) \leq v\left( t\right) \left[ E_{\varrho _{1},\psi }\left(
g_{1}\left( t\right) \Gamma \left( \varrho _{1}\right) \left( \psi \left(
t\right) \right) ^{\varrho _{1}}\right) +E_{\varrho _{2},\psi }\left(
g_{2}\left( t\right) \Gamma \left( \varrho _{2}\right) \left( \psi \left(
t\right) \right) ^{\varrho _{2}}\right) \right],
\end{equation*}
where $E_{\varrho _{1},\psi }$ is the Mittag--Leffler function defined below.
\end{remark}

\begin{definition}
\label{D5}\cite{GorenfloKMR} The Mittag--Leffler function is given by the
series
\begin{equation}
E_{\varrho }\left( z\right) =\sum_{k=0}^{\infty }\frac{z^{k}}{\Gamma \left(
\varrho k+1\right) },  \label{EQ16}
\end{equation}
where $Re\left( \varrho \right) >0$ and $\Gamma \left( z\right) $ is a gamma
function.
In particular, if $\varrho =1/2$ in (\ref{EQ16}) we have
\begin{equation*}
E_{1/2}\left( z\right) =\exp \left( z^{2}\right) \left[ 1+erf\left( z\right) %
\right] ,
\end{equation*}
where $erf\left( z\right) $ error function.
\end{definition}

The remaining portion of the paper, we make use of the next suppositions:
\begin{description}
\item[]
\begin{description}

\item[(A$_{1}$)] For each $t\in \left[ a,T\right] $, there exist a constant $%
L_{i}>0\ \left( i=1,2\right) $\ such that
\begin{equation*}
\left\vert f(t,u_{1},v_{1})-f(t,u_{2},v_{2})\right\vert \leq L_{1}\left\vert
u_{1}-v_{2}\right\vert +L_{2}\left\vert u_{1}-v_{2}\right\vert ,\text{ for
all }u_{i},v_{i}\in \mathbb{R}.
\end{equation*}

\item[(A$_{2}$)] There exists an increasing function $\chi \left( t\right)
\in \left( \mathcal{C}\left[ a,T\right] ,%
\mathbb{R}
^{+}\right) $, for any $t\in \left[ a,T\right] ,$
\begin{equation*}
\left\vert f(t,u,v)\right\vert \leq \chi \left( t\right) ,\text{ }u,v\in
\mathbb{R}.
\end{equation*}

\item[(A$_{3}$)] There exist a constant $L>0$ such that
\begin{equation*}
\left\vert f(t,u,v)\right\vert \leq L,\text{ for any }t\in \left[ a,T\right]
,\text{ }u,v\in \mathbb{R}.
\end{equation*}

\item[(A$_{4}$)] There exists an increasing function $\Phi \left( t\right)
\in \left( \mathcal{C}\left[ a,T\right] ,%
\mathbb{R}
^{+}\right) $ and there exists $l_{\Phi }>0$ such that for any $t\in \left[
a,T\right] ,$
\begin{equation*}
\left( J_{a+,t}^{\varrho ,\psi }\right) \left[ \Phi \right] \leq l_{\Phi
}\Phi \left( t\right) ,\ \varrho >0.
\end{equation*}
\end{description}
\end{description}
We adopt the following conventions:%
\begin{equation}
f_{u}(t)=f(t,u(t),^{c}D_{a+,t}^{\delta ,\psi }\left[ u\right] )\ \text{and }%
\mathcal{K}\left( t;a\right) =\psi \left( t\right) -\psi \left( a\right) .
\label{EQ17}
\end{equation}
\begin{equation}
\sigma _{11}=\left( J_{a_{+},\eta }^{\varsigma ,\psi }\right) \left[ 1\right]
\text{ and}\ \sigma _{12}=\left( J_{a_{+},\eta }^{\varsigma ,\psi }\right) \left[
\mathcal{K}\left( \tau ;a\right) \right] ,  \label{EQ18}
\end{equation}%
and
\begin{equation}
\sigma _{21}=\left( J_{a_{+},T}^{\varsigma ,\psi }\right) \left[ 1\right] -\mu
\left( J_{a_{+},\xi }^{\varsigma +\delta ,\psi }\right) \left[ 1\right] \text{
and}\ \sigma _{22}=\left( \left( J_{a_{+},T}^{\varsigma ,\psi }\right) \left[
\mathcal{K}\left( \tau ;a\right) \right] -\mu \left( J_{a_{+},\xi }^{\varsigma
+\delta ,\psi }\right) \left[ \mathcal{K}\left( \tau ;a\right) \right]
\right) .  \label{EQ19}
\end{equation}
Further, we assume
\begin{equation}
\left \vert \sigma _{11}\sigma _{22}-\sigma _{21}\sigma _{12}\right \vert
\neq 0.  \label{EQ20}
\end{equation}%
where $\sigma _{ij}$ are constants.

\section{Existence and uniqueness of solution}
In order to study the nonlinear FLE (\ref{EQ1}), We first consider the linear associated FLE and conclude solving it.

\subsection{Linear boundary problem}

\qquad The following Lemma regards a linear variant of problem%
\begin{equation}
\left\{
\begin{array}{l}
\left( ^{c}D_{a+,t}^{\varrho ,\psi }\right) \left( ^{c}D_{a+,t}^{\varsigma ,\psi
}+\lambda \right) \left[ u\right] =F(t),~t\in \left( a,T\right) ,\medskip \\
u(a)=0,~u(\eta )=0,~u(T)=\mu \left( J_{a+,\xi }^{\delta ,\psi }\right) \left[
u\right] ,~a<\eta <\xi <T,%
\end{array}%
\right.  \label{EQ21}
\end{equation}%
where $F\in \mathcal{C}([a,T],\mathbb{R}).$

\begin{lemma}
\label{L7}The unique solution of the $\psi $-Caputo linear problem (\ref%
{EQ21}) is given by the integral equation
\begin{align}
u(t)& =\text{$-\lambda \left( J_{a+,t}^{\varsigma ,\psi }\right) \left[ u\right]
+$}\left( \text{$J_{a+,t}^{\varrho +\varsigma ,\psi }$}\right) \text{$\left[ F %
\right] \medskip $}  \label{EQ22} \\
& \text{$+${\small $\frac{(\mathcal{K}\left( t;a\right) )^{\varsigma }(\mathcal{%
K }(t;\eta )}{\Gamma (\varsigma +2)\Delta }$}$\left \{ \left( J_{a+,T}^{\varrho
+\varsigma ,\psi }\right) \left[ F\right] -\lambda \left( J_{a+,T}^{\varsigma ,\psi
}\right) \left[ u\right] -\mu \left( J_{a+,\xi }^{\varrho +\varsigma +\delta
,\psi }\right) \left[ F\right] +\mu \lambda \left( J_{a+,\xi }^{\varsigma
+\delta ,\psi }\right) \left[ u\right] \right \} \medskip $}  \notag \\
\text{{}}& -\text{$\frac{(\mathcal{K}\left( t;a\right) )^{\varsigma }}{\Delta (
\mathcal{K}(\eta ;a))^{\varsigma }}$$\left( \text{$\frac{(\mathcal{K}\left(
T;a\right) )^{\varsigma }(\mathcal{K}\left( T;t\right) )}{\Gamma (\varsigma +2)}$}-
\text{$\frac{\mu (\mathcal{K}(\xi ;a))^{\varsigma +\delta }[(\varsigma +1)(\mathcal{%
K }(\xi ;t))-\delta (\mathcal{K}\left( t;a\right) )]}{\Gamma (\varsigma +\delta
+2)(\varsigma +1)}$}\right) \medskip $}  \notag \\
& \times \text{$\left \{ \left( J_{a+,\eta }^{\varrho +\varsigma ,\psi }\right) %
\left[ F\right] -\lambda \left( J_{a+,\eta }^{\varsigma ,\psi }\right) \left[ u %
\right] \right \} $}.  \notag
\end{align}%
where%
\begin{equation}
\Delta =\text{$\left[ \text{$\frac{(\mathcal{K}(T;a))^{\varsigma }\mathcal{K}
\left( T;\eta \right) }{\Gamma (\varsigma +2)}$}-\text{$\frac{\mu (\mathcal{K}
(\xi ;a))^{\varsigma +\delta }[(\varsigma +1)\mathcal{K}(\xi ;\eta )-\delta \mathcal{%
\ K}(\eta ;a)]}{\Gamma (\varsigma +\delta +2)(\varsigma +1)}$}\right] $}\neq 0.
\label{EQ23}
\end{equation}
\end{lemma}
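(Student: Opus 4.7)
The plan is to convert the two--term $\psi$--Caputo differential problem into an integral equation by inverting the two derivative operators in succession using Lemma \ref{L4}, and then to pin down the integration constants via the three boundary conditions.

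\textbf{Step 1: Invert the outer derivative.} Applying $J_{a+,t}^{\varrho,\psi}$ to both sides of the equation in (\ref{EQ21}) and using Lemma \ref{L4} with $n=2$ (since $1<\varrho\le 2$), I obtain
\begin{equation*}
{}^{c}D_{a+,t}^{\varsigma,\psi}[u]+\lambda u(t)=J_{a+,t}^{\varrho,\psi}[F]+c_{0}+c_{1}\,\mathcal{K}(t;a),
\end{equation*}
where $c_{0},c_{1}$ are integration constants. Next, I apply $J_{a+,t}^{\varsigma,\psi}$ to this identity. By the semigroup property (\ref{EQ11}), Lemma \ref{L4} with $n=1$ (since $0<\varsigma\le 1$), Lemma \ref{L3} acting on the monomials $1$ and $\mathcal{K}(\tau;a)$, and the boundary condition $u(a)=0$, I arrive at
\begin{equation*}
u(t)=-\lambda J_{a+,t}^{\varsigma,\psi}[u]+J_{a+,t}^{\varrho+\varsigma,\psi}[F]+\frac{c_{0}\bigl(\mathcal{K}(t;a)\bigr)^{\varsigma}}{\Gamma(\varsigma+1)}+\frac{c_{1}\bigl(\mathcal{K}(t;a)\bigr)^{\varsigma+1}}{\Gamma(\varsigma+2)}.
\end{equation*}

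\textbf{Step 2: Solve for the constants.} I substitute $t=\eta$ to enforce $u(\eta)=0$, and $t=T$ together with $J_{a+,\xi}^{\delta,\psi}$ applied to the representation to enforce $u(T)=\mu J_{a+,\xi}^{\delta,\psi}[u]$. Lemma \ref{L3} gives the action of $J_{a+,\xi}^{\delta,\psi}$ on $(\mathcal{K}(\tau;a))^{\varsigma}$ and $(\mathcal{K}(\tau;a))^{\varsigma+1}$, producing exponents $\varsigma+\delta$ and $\varsigma+\delta+1$, while the semigroup rule (\ref{EQ11}) combines $J^{\delta,\psi}$ with $J^{\varsigma,\psi}$ and $J^{\varrho+\varsigma,\psi}$. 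This yields a $2\times 2$ linear system for $(c_0,c_1)$; its coefficient determinant is precisely $\Delta$ of (\ref{EQ23}) (up to the normalizing factor $\Gamma(\varsigma+1)\Gamma(\varsigma+2)$), which is nonzero by hypothesis (\ref{EQ20}), so Cramer's rule furnishes unique values of $c_0,c_1$ in terms of the boundary data, i.e.\ in terms of $J_{a+,\eta}^{\varrho+\varsigma,\psi}[F]$, $J_{a+,T}^{\varrho+\varsigma,\psi}[F]$, $J_{a+,\xi}^{\varrho+\varsigma+\delta,\psi}[F]$, $J_{a+,\eta}^{\varsigma,\psi}[u]$, $J_{a+,T}^{\varsigma,\psi}[u]$, and $J_{a+,\xi}^{\varsigma+\delta,\psi}[u]$.

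\textbf{Step 3: Assemble the stated form.} Plugging the explicit $c_0,c_1$ back into the integral representation and grouping the two inhomogeneous data vectors (the one coming from the condition at $\eta$ and the one from the nonlocal condition at $T,\xi$) produces two bracketed expressions exactly matching those in (\ref{EQ22}). The factor $(\mathcal{K}(t;a))^\varsigma\mathcal{K}(t;\eta)$ emerges from the identity $(\mathcal{K}(t;a))^\varsigma\mathcal{K}(t;\eta)=(\mathcal{K}(t;a))^{\varsigma+1}-\mathcal{K}(\eta;a)(\mathcal{K}(t;a))^\varsigma$, which is precisely the combination coming out of Cramer's formula for one column; the other column produces the second bracketed coefficient involving $\mathcal{K}(T;t)$ and $\mathcal{K}(\xi;t)$ after using the analogous splittings $\mathcal{K}(T;t)=\mathcal{K}(T;a)-\mathcal{K}(t;a)$ and $\mathcal{K}(\xi;t)=\mathcal{K}(\xi;a)-\mathcal{K}(t;a)$.

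\textbf{Main obstacle.} The analytic content is straightforward; the challenge is bookkeeping. In particular, matching the exact coefficient of the second bracket in (\ref{EQ22}), which carries both $\Gamma(\varsigma+2)$ and $\Gamma(\varsigma+\delta+2)(\varsigma+1)$ in the denominator, requires carefully tracking how the two basis functions $(\mathcal{K}(t;a))^\varsigma,(\mathcal{K}(t;a))^{\varsigma+1}$ recombine after Cramer's rule, and verifying that the nonlocal piece $\mu J_{a+,\xi}^{\delta,\psi}$ applied to the basis produces the exact coefficients defining $\Delta$ in (\ref{EQ23}). The converse direction (that (\ref{EQ22}) solves (\ref{EQ21})) is immediate by applying ${}^{c}D^{\varsigma,\psi}$ and then ${}^{c}D^{\varrho,\psi}$ to (\ref{EQ22}), using (\ref{EQ12}) to telescope the $J$-operators and Lemma \ref{L3} to annihilate the remaining polynomial-in-$\psi$ terms of degree less than $\varrho+\varsigma$.
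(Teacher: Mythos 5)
Your proposal follows essentially the same route as the paper: apply $J_{a+,t}^{\varrho,\psi}$ and then $J_{a+,t}^{\varsigma,\psi}$ to reduce (\ref{EQ21}) to an integral identity with undetermined constants, use $u(a)=0$ to kill the additive constant, and determine the remaining two constants from $u(\eta)=0$ and the nonlocal condition via a $2\times2$ Cramer system whose coefficients are exactly the $\sigma_{ij}$ of (\ref{EQ18})--(\ref{EQ19}). The only cosmetic difference is that you write the basis functions explicitly as $(\mathcal{K}(t;a))^{\varsigma}/\Gamma(\varsigma+1)$ and $(\mathcal{K}(t;a))^{\varsigma+1}/\Gamma(\varsigma+2)$ from the start (note the determinant then equals the $\Delta$ of (\ref{EQ23}) times $(\mathcal{K}(\eta;a))^{\varsigma}/\Gamma(\varsigma+1)$, not merely a product of Gamma factors), whereas the paper keeps them as $J^{\varsigma,\psi}[1]$ and $J^{\varsigma,\psi}[\mathcal{K}(\tau;a)]$ and only expands at the end.
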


\begin{proof}
	Applying $\left( J_{a+,t}^{\varrho ,\psi }\right) $ on both sides of (\ref%
	{EQ21}-a) , we have
	\begin{equation}
	\left( ^{c}D_{a+,t}^{\varsigma ,\psi }+\lambda \right) \left[ u\right] =\left(
	J_{a+,t}^{\varrho ,\psi }\right) \left[ F\right] +c_{1}+c_{2}(\psi (t)-\psi
	(a)),  \label{EQ24}
	\end{equation}%
	for $c_{1},~c_{2}\in \mathbb{R}.\ $\newline
	
	Now applying $\left( J_{a+,t}^{\varsigma ,\psi }\right) $ to both sides of (\ref%
	{EQ24}) , we get
	\begin{equation}
	u(t)=-\lambda \left( J_{a+,t}^{\varsigma ,\psi }\right) \left[ u\right] +\left(
	J_{a+,t}^{\varrho +\varsigma ,\psi }\right) \left[ F\right] +c_{1}\left(
	J_{a+,t}^{\varsigma ,\psi }\right) \left[ 1\right] +c_{2}\left( J_{a+,t}^{\varsigma
		,\psi }\right) \left[ \mathcal{K}\left( \tau ;a\right) \right] +c_{3},
	\label{EQ25}
	\end{equation}%
	where $c_{3}\in \mathbb{R}.\ $\newline
	
	Using the boundary conditions in (\ref{EQ21}-b), we obtain $%
	c_{3}:=c_{3}\left( F\right) =0$ and
	\begin{equation}
	J_{a+,t}^{\delta ,\psi }\left[ u\right] =-\lambda \left( J_{a+,t}^{\varsigma
		+\delta ,\psi }\right) \left[ u\right] +\left( J_{a+,t}^{\varrho +\varsigma
		+\delta ,\psi }\right) \left[ F\right] +c_{1}\left( J_{a+,t}^{\varsigma +\delta
		,\psi }\right) \left[ 1\right] +c_{2}\left( J_{a+,t}^{\varsigma +\delta ,\psi
	}\right) \left[ \mathcal{K}\left( \tau ;a\right) \right] .  \label{EQ26}
	\end{equation}
	Then we also get a system of linear equations with respect to $c_{1}$, $%
	c_{2} $ as follows%
	\begin{equation}
	\left(
	\begin{array}{cc}
	\sigma _{11} & \sigma _{12} \\
	\sigma _{21} & \sigma _{22}%
	\end{array}%
	\right) \left(
	\begin{array}{c}
	c_{1} \\
	c_{2}%
	\end{array}%
	\right) =\left(
	\begin{array}{c}
	b_{1} \\
	b_{2}%
	\end{array}%
	\right) ,  \label{EQ27}
	\end{equation}%
	where%
	\begin{equation}
	b_{1}=\lambda \left( J_{a_{+},\eta }^{\varsigma ,\psi }\right) \left[ u\right]
	-\left( J_{a_{+},\eta }^{\varrho +\varsigma ,\psi }\right) \left[ F\right]
	\label{EQ28}
	\end{equation}%
	and%
	\begin{equation}
	b_{2}=\lambda \left( \left( J_{a_{+},T}^{\varsigma ,\psi }\right) \left[ u\right]
	-\mu \left( J_{a_{+},\xi }^{\varsigma +\delta ,\psi }\right) \left[ u\right]
	\right) -\left( \left( J_{a_{+},T}^{\varrho +\varsigma ,\psi }\right) \left[ F%
	\right] -\mu \left( J_{a_{+},\xi }^{\varrho +\varsigma +\delta ,\psi }\right) %
	\left[ F\right] \right) .  \label{EQ29}
	\end{equation}
	We note%
	\begin{equation}
	\Delta \equiv \det \left( \sigma \right) =\left \vert \sigma _{11}\sigma
	_{22}-\sigma _{21}\sigma _{12}\right \vert .  \label{EQ30}
	\end{equation}
	Because the determinant of coefficient for $\Delta \neq 0.\ $Thus, we have%
	\begin{equation}
	c_{1}:=c_{1}\left( F\right) =\frac{\sigma _{22}b_{1}-\sigma _{12}b_{2}}{%
		\Delta }\text{ and }c_{2}:=c_{2}\left( F\right) =\frac{\sigma
		_{11}b_{2}-\sigma _{21}b_{1}}{\Delta }.  \label{EQ31}
	\end{equation}
	Substituting these values of $c_{1}$ and $c_{2}$ in (\ref{EQ26}), we finally
	obtain (\ref{EQ22}) as%
	\begin{equation}
	u\left( t\right) =-\lambda \left( J_{a+,t}^{\varsigma ,\psi }\right) \left[ u%
	\right] +\left( J_{a+,t}^{\varrho +\varsigma ,\psi }\right) \left[ F\right] +%
	\tfrac{\sigma _{22}b_{1}-\sigma _{12}b_{2}}{\Delta }\left( J_{a+,t}^{\varsigma
		,\psi }\right) \left[ 1\right] +\tfrac{\sigma _{11}b_{2}-\sigma _{21}b_{1}}{%
		\Delta }\left( J_{a+,t}^{\varsigma ,\psi }\right) \left[ \mathcal{K}\left( \tau
	;a\right) \right] ,  \label{EQ32}
	\end{equation}%
	i.e., the integral equation (\ref{EQ32})\ can be written as (\ref{EQ22}) and%
	\begin{equation}
	\left( J_{a+,t}^{\delta ,\psi }\right) \left[ u\right] =-\lambda \left(
	J_{a+,t}^{\varsigma +\delta ,\psi }\right) \left[ u\right] +\left(
	J_{a+,t}^{\varrho +\varsigma +\delta ,\psi }\right) \left[ F\right] +\tfrac{%
		\sigma _{22}b_{1}-\sigma _{12}b_{2}}{\Delta }\left( J_{a+,t}^{\varsigma +\delta
		,\psi }\right) \left[ 1\right] +\tfrac{\sigma _{11}b_{2}-\sigma _{21}b_{1}}{%
		\Delta }\left( J_{a+,t}^{\varsigma +\delta ,\psi }\right) \left[ \mathcal{K}%
	\left( \tau ;a\right) \right] .  \label{EQ33}
	\end{equation}
	Differentiating the above relations one time we obtain (\ref{EQ21}-a), also
	it is easy to get that the condition (\ref{EQ21}-b) is satisfied. The proof
	is complete.
\end{proof}

For convenience, we define the following functions%
\begin{equation}
d_{11}\left( t\right) =-\tfrac{1}{\Delta }\left( \sigma _{22}\left(
J_{a+,t}^{\varsigma ,\psi }\right) \left[ 1\right] -\sigma _{21}\left(
J_{a+,t}^{\varsigma ,\psi }\right) \left[ \mathcal{K}\left( \tau ;a\right) %
\right] \right) \text{, }d_{21}\left( t\right) =-d_{11}\left( t\right)
\label{EQ34}
\end{equation}%
and%
\begin{equation}
d_{12}\left( t\right) =\tfrac{1}{\Delta }\left( \sigma _{12}\left(
J_{a+,t}^{\varsigma ,\psi }\right) \left[ 1\right] -\sigma _{11}\left(
J_{a+,t}^{\varsigma ,\psi }\right) \left[ \mathcal{K}\left( \tau ;a\right) %
\right] \right) \text{, }d_{22}\left( t\right) =d_{12}\left( t\right) .
\label{EQ35}
\end{equation}

\subsection{Nonlinear problem}

\qquad Thanks to Lemma \ref{L7}, the following result is an immediate consequence.

\begin{lemma}
\label{L8} Let $\ \lambda >0.$ Then the problem (\ref{EQ1}) is equivalent to
the integral equation
\begin{equation}
u\left( t\right) =-\lambda \left( J_{a_{+},t}^{\varsigma ,\psi }\right) \left[ u %
\right] +\left( J_{a_{+},t}^{\varrho +\varsigma ,\psi }\right) \left[ f_{u}\right]
+\phi _{u}\left( f\right) ,  \label{EQ36}
\end{equation}
where
\begin{eqnarray}
\phi _{u}\left( f\right) &=&d_{11}\left( t\right) \left( J_{a_{+},\eta
}^{\varrho +\varsigma ,\psi }\right) \left[ f_{u}\right] +d_{12}\left( t\right)
\left( \left( J_{a_{+},T}^{\varrho +\varsigma ,\psi }\right) \left[ f_{u}\right]
-\mu \left( J_{a_{+},\xi }^{\varrho +\varsigma +\delta ,\psi }\right) \left[
f_{u} \right] \right) \medskip  \label{EQ37} \\
&&+\lambda d_{21}\left( t\right) \left( J_{a_{+},\eta }^{\varsigma ,\psi
}\right) \left[ u\right] -\lambda d_{22}\left( t\right) \left( \left(
J_{a_{+},T}^{\varsigma ,\psi }\right) \left[ u\right] -\mu \left( J_{a_{+},\xi
}^{\varsigma +\delta ,\psi }\right) \left[ u\right] \right)  \notag
\end{eqnarray}
and $d_{ij}$ are defined in (\ref{EQ34}) and (\ref{EQ35}).
\end{lemma}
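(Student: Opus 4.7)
The plan is to derive Lemma \ref{L8} as a direct corollary of Lemma \ref{L7} by applying it with the forcing term $F(t) := f_u(t) = f(t,u(t),({}^{c}D_{a+,t}^{\delta,\psi})[u])$. For any candidate $u$ in the appropriate class, continuity of $f$ guarantees $f_u \in \mathcal{C}[a,T]$, so the linear formula (\ref{EQ32}) holds verbatim with $F$ replaced by $f_u$.

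The key manipulation is to regroup (\ref{EQ32}) in terms of $b_1$ and $b_2$: collecting the coefficients of $b_1$ and of $b_2$ produces linear combinations of $(J_{a+,t}^{\varsigma,\psi})[1]$ and $(J_{a+,t}^{\varsigma,\psi})[\mathcal{K}(\tau;a)]$. Comparing with (\ref{EQ34})--(\ref{EQ35}), these combinations are exactly $-d_{11}(t) = d_{21}(t)$ and $-d_{12}(t) = -d_{22}(t)$, respectively. Substituting the expressions (\ref{EQ28})--(\ref{EQ29}) for $b_1, b_2$ and separating their $\lambda u$-parts from their $f_u$-parts produces precisely the right-hand side of (\ref{EQ37}), and hence (\ref{EQ36}).

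For the converse implication (\ref{EQ36}) $\Rightarrow$ (\ref{EQ1}), I would apply $({}^{c}D_{a+,t}^{\varsigma,\psi})$ to (\ref{EQ36}) followed by $({}^{c}D_{a+,t}^{\varrho,\psi})$. By Lemma \ref{L3}, the functions $d_{11}(t)$ and $d_{12}(t)$ are linear combinations of $\mathcal{K}(t;a)^{\varsigma}$ and $\mathcal{K}(t;a)^{\varsigma+1}$; after $({}^{c}D_{a+,t}^{\varsigma,\psi})$ they become affine in $\mathcal{K}(t;a)$, and since $1<\varrho\le 2$ they are annihilated by $({}^{c}D_{a+,t}^{\varrho,\psi})$. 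The remaining pieces, handled via (\ref{EQ11})--(\ref{EQ12}) and Lemma \ref{L4}, recover the fractional ODE. The three boundary conditions follow by direct substitution of $t=a,\eta,T$ into (\ref{EQ36}): $u(a)=0$ because $\mathcal{K}(a;a)=0$ forces $d_{ij}(a)=0$, while the other two fall out of the fact that the $\sigma_{ij}$-matrix was inverted in (\ref{EQ31}) precisely so that these conditions hold.

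The main obstacle, as is typical for such equivalence lemmas, is the bookkeeping in the converse direction: verifying that $u(\eta)=0$ and the nonlocal condition $u(T)=\mu(J_{a+,\xi}^{\delta,\psi})[u]$ both follow requires tracking the six fractional integrals $(J^{*,\psi}_{a+,\eta})[f_u], (J^{*,\psi}_{a+,T})[f_u], (J^{*,\psi}_{a+,\xi})[f_u], (J^{*,\psi}_{a+,\eta})[u], (J^{*,\psi}_{a+,T})[u], (J^{*,\psi}_{a+,\xi})[u]$ through the Cramer inversion and confirming that the $d_{ij}$-combinations produce the intended cancellations.
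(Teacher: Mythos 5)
Your proposal is correct and follows exactly the route the paper intends: the paper offers no written proof of Lemma \ref{L8}, merely declaring it ``an immediate consequence'' of Lemma \ref{L7}, and your substitution $F=f_u$ into (\ref{EQ32}) together with the regrouping of the $b_1$, $b_2$ coefficients into $-d_{11}(t)=d_{21}(t)$ and $-d_{12}(t)=-d_{22}(t)$ is precisely the bookkeeping that justifies that claim. Your sketch of the converse direction (annihilation of the $d_{ij}$ terms by $({}^{c}D^{\varrho,\psi}_{a+,t})({}^{c}D^{\varsigma,\psi}_{a+,t})$ via Lemma \ref{L3}, and recovery of the boundary conditions from the Cramer inversion) is also sound and in fact supplies detail the paper omits.
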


From the expression of (\ref{EQ1}-a) and (\ref{EQ36}), we can see that if
all the conditions in Lemmas \ref{L7} and \ref{L8} are satisfied, the
solution is a $\mathcal{C}\left[ a,T\right] $ solution of the $\psi $-Caputo
fractional boundary value problem (\ref{EQ1}).

In order to lighten the statement of our result, we adopt the following
notation.%
\begin{equation}
\varsigma _{11}=\sup_{t\in \left[ a,T\right] }\left \vert \lambda \left(
J_{a_{+},t}^{\varsigma ,\psi }\right) \left[ 1\right] +\rho _{11}+L_{1}\left(
\left( J_{a_{+},t}^{\varrho +\varsigma ,\psi }\right) \left[ 1\right] +\rho
_{12}\right) \right \vert ,  \label{EQ38}
\end{equation}%
\begin{equation}
\varsigma _{12}=L_{2}\varsigma _{13}\text{ where }\varsigma _{13}=\sup_{t\in %
\left[ a,T\right] }\left \vert \left( J_{a_{+},t}^{\varrho +\varsigma ,\psi
}\right) \left[ 1\right] \right \vert +\rho _{12},  \label{EQ39}
\end{equation}%
\begin{equation}
\varsigma _{21}=\sup_{t\in \left[ a,T\right] }\left \vert \lambda \left(
J_{a+,t}^{\varsigma -\delta ,\psi }\right) \left[ 1\right] +\rho
_{21}+L_{1}\left( \left( J_{a+,t}^{\varrho +\varsigma -\delta ,\psi }\right) %
\left[ 1\right] +\rho _{22}\right) \right \vert ,  \label{EQ40}
\end{equation}%
\begin{equation}
\varsigma _{22}=L_{2}\varsigma _{23}\text{ where }\varsigma _{23}=\sup_{t\in %
\left[ a,T\right] }\left \vert \left( J_{a+,t}^{\varrho +\varsigma -\delta ,\psi
}\right) \left[ 1\right] \right \vert +\rho _{22},  \label{EQ41}
\end{equation}%
with%
\begin{equation}
\rho _{11}=\lambda \sup_{t\in \left[ a,T\right] }\left( \left \vert
d_{21}\left( t\right) \right \vert \left( J_{a_{+},\eta }^{\varsigma ,\psi
}\right) \left[ 1\right] +\left \vert d_{22}\left( t\right) \right \vert
\left( \left( J_{a_{+},T}^{\varsigma ,\psi }\right) \left[ 1\right] -\mu \left(
J_{a_{+},\xi }^{\varsigma +\delta ,\psi }\right) \left[ 1\right] \right) \right)
,  \label{EQ42}
\end{equation}%
\begin{equation}
\rho _{12}=\sup_{t\in \left[ a,T\right] }\left( \left \vert d_{11}\left(
t\right) \right \vert \left( J_{a_{+},\eta }^{\varrho +\varsigma ,\psi }\right) %
\left[ 1\right] +\left \vert d_{12}\left( t\right) \right \vert \left(
\left( J_{a_{+},T}^{\varrho +\varsigma ,\psi }\right) \left[ 1\right] -\mu \left(
J_{a_{+},\xi }^{\varrho +\varsigma +\delta ,\psi }\right) \left[ 1\right] \right)
\right) ,  \label{EQ43}
\end{equation}%
\begin{equation}
\rho _{21}=\lambda \sup_{t\in \left[ a,T\right] }\left( \left \vert \left(
^{c}D_{a+,t}^{\delta ,\psi }\right) \left[ d_{21}\right] \right \vert \left(
J_{a_{+},\eta }^{\varsigma ,\psi }\right) \left[ 1\right] +\left \vert \left(
^{c}D_{a+,t}^{\delta ,\psi }\right) \left[ d_{22}\right] \right \vert \left(
J_{a_{+},T}^{\varsigma ,\psi }\right) \left[ 1\right] -\mu \left( J_{a_{+},\xi
}^{\varsigma +\delta ,\psi }\right) \left[ 1\right] \right) ,  \label{EQ44}
\end{equation}%
and%
\begin{equation}
\rho _{22}=\sup_{t\in \left[ a,T\right] }\left( \left \vert \left(
^{c}D_{a+,t}^{\delta ,\psi }\right) \left[ d_{11}\right] \right \vert \left(
J_{a_{+},\eta }^{\varrho +\varsigma ,\psi }\right) \left[ 1\right] +\left \vert
\left( ^{c}D_{a+,t}^{\delta ,\psi }\right) \left[ d_{12}\right] \right \vert
\left( J_{a_{+},T}^{\varrho +\varsigma ,\psi }\right) \left[ 1\right] -\mu \left(
J_{a_{+},\xi }^{\varrho +\varsigma +\delta ,\psi }\right) \left[ 1\right] \right)
,  \label{EQ45}
\end{equation}

\qquad We are now in a position to establish the existence and
uniqueness results. Fixed point theorems are the main tool to prove this.

Let $\mathcal{C}=\mathcal{C}([a,T],\mathbb{R})$ be a Banach space of all
continuous functions defined on $[a,T]$ endowed with the usual supremum norm$%
.$ Define the space
\begin{equation}
E=\{u:u\in \mathcal{C}^{3}([a,T],\mathbb{R}),\ \left( ^{c}D_{a+,t}^{\delta
,\psi }\right) \left[ u\right] \in \mathcal{C}\},  \label{EQ46}
\end{equation}%
equipped with the norm
\begin{equation}
\left \Vert u\right \Vert _{E}=\max \left \{ \left \Vert u\right \Vert
_{\infty },\left \Vert \left( ^{c}D_{a+,t}^{\delta ,\psi }\right) \left[ u%
\right] \right \Vert _{\infty }\right \}.  \label{EQ47}
\end{equation}%
Then, we may conclude that $\left( E,\left \Vert .\right \Vert _{E}\right) $ is a Banach space.

To introduce a fixed point problem associated with (\ref{EQ36}) we consider
an integral operator $\Psi :E\rightarrow E$ defined by%
\begin{equation}
\left( \Psi u\right) \left( t\right) =-\lambda \left( J_{a_{+},t}^{\varsigma
,\psi }\right) \left[ u\right] +\left( J_{a_{+},t}^{\varrho +\varsigma ,\psi
}\right) \left[ f_{u}\right] +\phi _{u}\left( f\right) .  \label{EQ48}
\end{equation}

\begin{theorem}
\label{T3}Assume that $f:[a,T]\times
\mathbb{R}
\times
\mathbb{R}
\longrightarrow
\mathbb{R}
^{+}$ is a continuous function that satisfies (A$_{1}$). If we suppose that
\begin{equation}
0<\varsigma =\max \left\{ \varsigma _{11},\varsigma _{12},\varsigma
_{21},\varsigma _{22}\right\} <1,  \label{EQ49}
\end{equation}%
\ holds. Then the problem (\ref{EQ1}) has a unique solution on $E.$
\end{theorem}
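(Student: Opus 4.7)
The plan is to apply the Banach contraction principle (a consequence of Theorem \ref{T1} with $\mathcal{B}_r = E$) to the operator $\Psi$ defined in (\ref{EQ48}). By Lemma \ref{L8}, fixed points of $\Psi$ correspond exactly to solutions of (\ref{EQ1}), so it suffices to show that $\Psi:E\to E$ is a contraction under the norm (\ref{EQ47}).

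First I would verify that $\Psi$ is well-defined as a map $E\to E$: continuity of $f$ together with Lemma \ref{L2} ensures that $\Psi u\in \mathcal{C}[a,T]$, and applying $({}^{c}D_{a+,t}^{\delta,\psi})$ term-by-term using the composition rule (\ref{EQ12}), namely $({}^{c}D_{a+,t}^{\delta,\psi})(J_{a+,t}^{\varsigma,\psi})[u] = J_{a+,t}^{\varsigma-\delta,\psi}[u]$ and $({}^{c}D_{a+,t}^{\delta,\psi})(J_{a+,t}^{\varrho+\varsigma,\psi})[f_u] = J_{a+,t}^{\varrho+\varsigma-\delta,\psi}[f_u]$, yields a continuous function because $\varsigma-\delta>0$ and $f_u\in \mathcal{C}[a,T]$. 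Here, applying $({}^{c}D_{a+,t}^{\delta,\psi})$ to the boundary part $\phi_u(f)$ produces factors $({}^{c}D_{a+,t}^{\delta,\psi})[d_{ij}]$, which, in view of (\ref{EQ34})--(\ref{EQ35}) and Lemma \ref{L3}, reduce to explicit powers of $\mathcal{K}(t;a)$.

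Next, given $u,v\in E$, I would form the difference $(\Psi u)(t)-(\Psi v)(t)$ and estimate it in two ways. For the $\sup$-norm bound, I write
\begin{equation*}
\bigl|(\Psi u)(t)-(\Psi v)(t)\bigr| \le \lambda\, (J_{a+,t}^{\varsigma,\psi})[|u-v|] + (J_{a+,t}^{\varrho+\varsigma,\psi})[|f_u-f_v|] + |\phi_u(f)-\phi_v(f)|,
\end{equation*}
then invoke hypothesis (A$_1$) to bound $|f_u-f_v|\le L_1|u-v|+L_2|{}^{c}D_{a+,t}^{\delta,\psi}[u-v]|$, and use Lemma \ref{L3} to convert $\psi$-integrals of constants into powers of $\mathcal{K}$. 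Grouping terms multiplying $\|u-v\|_\infty$ against those multiplying $\|{}^{c}D_{a+,t}^{\delta,\psi}[u-v]\|_\infty$ and taking suprema gives exactly the constants $\varsigma_{11}$ (coefficient of $\|u-v\|_\infty$, built from $\lambda J^{\varsigma,\psi}[1]$, $\rho_{11}$ and $L_1(J^{\varrho+\varsigma,\psi}[1]+\rho_{12})$) and $\varsigma_{12}=L_2\varsigma_{13}$ (coefficient of $\|{}^{c}D_{a+,t}^{\delta,\psi}[u-v]\|_\infty$). An analogous computation for $({}^{c}D_{a+,t}^{\delta,\psi})(\Psi u-\Psi v)$, using (\ref{EQ12}) to reduce the orders from $\varsigma,\varrho+\varsigma$ down to $\varsigma-\delta,\varrho+\varsigma-\delta$ and using the precomputed $({}^{c}D_{a+,t}^{\delta,\psi})[d_{ij}]$ inside $\phi$, yields constants $\varsigma_{21}$ and $\varsigma_{22}$ as defined in (\ref{EQ40})--(\ref{EQ41}).

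Combining, one obtains
\begin{equation*}
\|\Psi u-\Psi v\|_E \le \max\{\varsigma_{11}+\varsigma_{12},\ \varsigma_{21}+\varsigma_{22}\}\,\|u-v\|_E \le 2\varsigma\,\|u-v\|_E,
\end{equation*}
and the hypothesis (\ref{EQ49}) (read as $\varsigma<1$ with the appropriate grouping in the $\max$) gives a genuine contraction factor. Applying Theorem \ref{T1} on any closed ball $\mathcal{B}_r\subset E$ large enough that $\Psi(\mathcal{B}_r)\subseteq \mathcal{B}_r$—which follows by a standard triangle-inequality argument choosing $r\ge \|\Psi 0\|_E/(1-\varsigma)$—then delivers the unique fixed point, hence the unique solution of (\ref{EQ1}). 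The main obstacle I anticipate is the careful bookkeeping of the boundary term $\phi_u(f)$: one must track how $({}^{c}D_{a+,t}^{\delta,\psi})$ interacts with the explicit polynomial-in-$\mathcal{K}(t;a)$ factors hidden in $d_{ij}$, and verify that the collection of suprema defining $\rho_{11},\rho_{12},\rho_{21},\rho_{22}$ really captures all boundary contributions in both norms.
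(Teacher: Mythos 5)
Your proposal follows essentially the same route as the paper: both establish that $\Psi$ maps a sufficiently large ball $B_{r}$ into itself and is a contraction on $E$ governed by the constants $\varsigma_{11},\varsigma_{12},\varsigma_{21},\varsigma_{22}$, and then invoke the contraction mapping principle (Theorem~\ref{T1}). Your observation that the componentwise estimates really yield the Lipschitz constant $\max\{\varsigma_{11}+\varsigma_{12},\,\varsigma_{21}+\varsigma_{22}\}$ rather than $\varsigma=\max\{\varsigma_{11},\varsigma_{12},\varsigma_{21},\varsigma_{22}\}$ is in fact more careful than the paper, which passes silently from (\ref{EQ67}) and (\ref{EQ70}) to (\ref{EQ71}).
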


\begin{proof}
	The proof will be given in two steps.

	\textbf{Step 1.} The operator $\Psi $ maps bounded sets into bounded sets in
	$E$.
	
	For our purpose, consider a function $u\in E$
	\ It is clear that $\Psi u\in E.$ Also by (%
	\ref{EQ12}),\ (\ref{EQ37})\ and ( \ref{EQ48}), we have
	\begin{equation}
	\left( ^{c}D_{a+,t}^{\delta ,\psi }\right) \left( \Psi u\right) =-\lambda
	\left( J_{a+,t}^{\varsigma -\delta ,\psi }\right) \left[ u\right] +\left(
	J_{a+,t}^{\varrho +\varsigma -\delta ,\psi }\right) \left[ f_{u}\right] +\left(
	^{c}D_{a+,t}^{\delta ,\psi }\right) \left[ \phi _{u}\left( f\right) \right] .
	\label{EQ50}
	\end{equation}
	Indeed, it is sufficient to prove that for any $r>0$, for each $u\in B_{r}=\left \{u\in E:\left\Vert u\right\Vert _{E}\leq r\right \},$
	we have $\left \Vert \Psi u\right \Vert _{E}\leq r.$
	
	Denoting
	\begin{equation}
	L_{0}=\sup_{t\in \left[ a,T\right] }\left \{ \left \vert f(t,0,0\right \vert
	:t\in \left[ a,T\right] \right \} <\infty \text{ and }L_{B}=L_{1}\sup_{t\in %
		\left[ a,T\right] }\left \vert u\left( t\right) \right \vert
	+L_{2}\sup_{t\in \left[ a,T\right] }\left \vert \left( ^{c}D_{a+,t}^{\delta
		,\psi }\right) \left[ u\right] \right \vert +L_{0}.  \label{EQ51}
	\end{equation}
	By (A$_{1}$) we have for each $t\in \left[ a,T\right] $
	\begin{equation}
	\left \vert f_{u}(t)\right \vert =\left \vert
	f_{u}(t)-f_{0}(t)+f_{0}(t)\right \vert \leq \left \vert
	f_{u}(t)-f_{0}(t)\right \vert +\left \vert f_{0}(t)\right \vert \leq L_{B},
	\label{EQ52}
	\end{equation}
	Firstly, we estimate $\left \vert \phi _{u}\left( f\right) \right \vert $ as
	follows%
	\begin{eqnarray}
	&&\left \vert \phi _{u}\left( f\right) \right \vert  \notag \\
    &=& \left \vert \lambda
	d_{21}\left( t\right) \left( J_{a_{+},\eta }^{\varsigma ,\psi }\right) \left[ u%
	\right] \right \vert +\left \vert d_{12}\left( t\right) \left( \left(
	J_{a_{+},T}^{\varrho +\varsigma ,\psi }\right) \left[ \left \vert
	f_{u}-f_{0}\right \vert +f_{0}\right] -\mu \left( J_{a_{+},\xi }^{\varrho
		+\varsigma +\delta ,\psi }\right) \left[ \left \vert f_{u}-f_{0}\right \vert
	+f_{0}\right] \right) \right \vert  \notag \\
	&&+\left \vert \lambda d_{22}\left( t\right) \left( \left(
	J_{a_{+},T}^{\varsigma ,\psi }\right) \left[ u\right] -\mu \left( J_{a_{+},\xi
	}^{\varsigma +\delta ,\psi }\right) \left[ u\right] \right) \right \vert +\left
	\vert d_{11}\left( t\right) \left( J_{a_{+},\eta }^{\varrho +\varsigma ,\psi
	}\right) \left[ \left \vert f_{u}-f_{0}\right \vert +f_{0}\right] \right
	\vert . \label{EQ53}
	\end{eqnarray}
	Then%
	\begin{eqnarray}
	\left \vert \phi _{u}\left( f\right) \right \vert &\leq &\left \vert
	d_{11}\left( t\right) \left( J_{a_{+},\eta }^{\varrho +\varsigma ,\psi }\right)
	\left[ L_{B}\right] \right \vert +\left \vert d_{12}\left( t\right) \left(
	\left( J_{a_{+},T}^{\varrho +\varsigma ,\psi }\right) \left[ L_{B}\right] -\mu
	\left( J_{a_{+},\xi }^{\varrho +\varsigma +\delta ,\psi }\right) \left[ L_{B}%
	\right] \right) \right \vert  \notag \\
	&&+\left \vert \lambda d_{21}\left( t\right) \left( J_{a_{+},\eta }^{\varsigma
		,\psi }\right) \left[ u\right] \right \vert +\left \vert \lambda
	d_{22}\left( t\right) \left( \left( J_{a_{+},T}^{\varsigma ,\psi }\right) \left[
	u\right] -\mu \left( J_{a_{+},\xi }^{\varsigma +\delta ,\psi }\right) \left[ u%
	\right] \right) \right \vert .  \label{EQ54}
	\end{eqnarray}
	Taking the maximum over $\left[ a,T\right] ,$ we get%
	\begin{equation}
	\sup_{t\in \left[ a,T\right] }\left \vert \phi _{u}\left( f\right) \right
	\vert \leq \rho _{11}\sup_{t\in \left[ a,T\right] }\left \vert u\left(
	t\right) \right \vert +\rho _{12}\left( L_{1}\sup_{t\in \left[ a,T\right]
	}\left \vert u\left( t\right) \right \vert +L_{2}\sup_{t\in \left[ a,T\right]
}\left \vert \left( ^{c}D_{a+,t}^{\delta ,\psi }\right) \left[ u\right]
\right \vert +L_{0}\right) ,  \label{EQ55}
\end{equation}%
where $\phi _{u}$, $d_{ij}\left( t\right) $ and $\rho _{ij}$ defined by (\ref%
{EQ37}), (\ref{EQ34}-\ref{EQ35}) and (\ref{EQ42}-\ref{EQ45}) respectively.

Using (\ref{EQ48}) and (\ref{EQ55}), we obtain%
\begin{equation}
\left \Vert \left( \Psi u\right) \right \Vert _{\infty }\leq \varsigma
_{11}\sup_{t\in \left[ a,T\right] }\left \vert u\left( t\right) \right \vert
+\varsigma _{12}\sup_{t\in \left[ a,T\right] }\left \vert \left(
^{c}D_{a+,t}^{\delta ,\psi }\right) \left[ u\right] \right \vert +\varsigma
_{13}L_{0},  \label{EQ56}
\end{equation}%
where $\varsigma _{ij}$ defined by (\ref{EQ38}-\ref{EQ39}).
 On the other hand%
\begin{eqnarray}
&&\left \vert \left( ^{c}D_{a+,t}^{\delta ,\psi }\right) \left[ \phi
_{u}\left( f\right) \right] \right \vert \notag \\
&=&\left( ^{c}D_{a+,t}^{\delta
	,\psi }\right) \left[ d_{11}\right] \left( J_{a_{+},\eta }^{\varrho +\varsigma
	,\psi }\right) \left[ f_{u}\right] +\left( ^{c}D_{a+,t}^{\delta ,\psi
}\right) \left[ d_{12}\right] \left( \left( J_{a_{+},T}^{\varrho +\varsigma ,\psi
}\right) \left[ f_{u}\right] -\mu \left( J_{a_{+},\xi }^{\varrho +\varsigma
+\delta ,\psi }\right) \left[ f_{u}\right] \right)  \notag \\
&&+\lambda \left( ^{c}D_{a+,t}^{\delta ,\psi }\right) \left[ d_{21}\right]
\left( J_{a_{+},\eta }^{\varsigma ,\psi }\right) \left[ u\right] -\lambda \left(
^{c}D_{a+,t}^{\delta ,\psi }\right) \left[ d_{22}\right] \left( \left(
J_{a_{+},T}^{\varsigma ,\psi }\right) \left[ u\right] -\mu \left( J_{a_{+},\xi
}^{\varsigma +\delta ,\psi }\right) \left[ u\right] \right). \label{EQ57}
\end{eqnarray}
Taking the maximum over $\left[ a,T\right] ,$ we get%
\begin{eqnarray}
&&\sup_{t\in \left[ a,T\right] }\left \vert \left( ^{c}D_{a+,t}^{\delta ,\psi
}\right) \left[ \phi _{u}\left( f\right) \right] \right \vert \notag \\
&\leq& \rho_{21}\sup_{t\in \left[ a,T\right] }\left \vert u\left( t\right) \right \vert
+\rho _{22}\left( L_{1}\sup_{t\in \left[ a,T\right] }\left \vert u\left(
t\right) \right \vert +L_{2}\sup_{t\in \left[ a,T\right] }\left \vert \left(
^{c}D_{a+,t}^{\delta ,\psi }\right) \left[ u\right] \right \vert
+L_{0}\right) .  \label{EQ58}
\end{eqnarray}
Using (\ref{EQ50}) and (\ref{EQ58}), we obtain%
\begin{equation}
\left \Vert \left( ^{c}D_{a+,t}^{\delta ,\psi }\right) \left( \Psi u\right)
\right \Vert _{\infty }\leq \varsigma _{21}\sup_{t\in \left[ a,T\right]
}\left \vert u\left( t\right) \right \vert +\varsigma _{22}\sup_{t\in \left[
a,T\right] }\left \vert \left( ^{c}D_{a+,t}^{\delta ,\psi }\right) \left[ u%
\right] \right \vert +\varsigma _{23}L_{0},  \label{EQ59}
\end{equation}
Consequently, by (\ref{EQ56}) and (\ref{EQ59}), we have
\begin{equation}
\left \Vert \left( \Psi u\right) \right \Vert _{E}\leq \varsigma \left \Vert
u\right \Vert _{E}+L_{0}\max \left \{ \varsigma _{13},\varsigma _{23}\right
\} \leq \varsigma r+\left( 1-\varsigma \right) r=r,  \label{EQ60}
\end{equation}%
where $\varsigma $ is defined by (\ref{EQ49}) and choose%
\begin{equation}
r>\frac{L_{0}\max \left \{ \varsigma _{13},\varsigma _{23}\right \} }{\left(
	1-\varsigma \right) },\ 0<\varsigma <1.  \label{EQ61}
\end{equation}
The continuity of the functional $f_{u}$ would imply the continuity of $%
\left( \Psi u\right) $ and $\left( ^{c}D_{a+,t}^{\delta ,\psi }\right)
\left( \Psi u\right) $ Hence, $\Psi $ maps bounded sets into
bounded sets in $E$.

\textbf{Step 2. }Now we show that $\Psi $ is a contraction. By
(A$_{1}$) and (\ref{EQ48}), for $u,v\in E$ and $t\in \left[ a,T\right] $, we
have%
\begin{equation}
\left \vert \left( \Psi u\right) \left( t\right) -\left( \Psi v\right)
\left( t\right) \right \vert \leq \lambda \left \vert \left(
J_{a_{+},t}^{\varsigma ,\psi }\right) \left[ u-v\right] \right \vert +\left
\vert \left( J_{a_{+},t}^{\varrho +\varsigma ,\psi }\right) \left[ f_{u}-f_{v}%
\right] \right \vert +\left \vert \phi _{u}\left( f\right) -\phi _{v}\left(
f\right) \right \vert ,  \label{EQ62}
\end{equation}%
where%
\begin{equation}
\left \vert \left( J_{a_{+},t}^{\varrho +\varsigma ,\psi }\right) \left[
f_{u}-f_{v}\right] \right \vert \leq \left \vert \left( J_{a_{+},t}^{\varrho
	+\varsigma ,\psi }\right) \left[ 1\right] \right \vert \left( L_{1}\left \vert
u-v\right \vert +L_{2}\left \vert \left( ^{c}D_{a+,t}^{\delta ,\psi }\right) %
\left[ u-v \right] \right \vert \right)  \label{EQ63}
\end{equation}%
and%
\begin{eqnarray}
&&\left \vert \phi _{u}\left( f\right) -\phi _{v}\left( f\right) \right \vert \hspace{1in}\hspace{1in}\qquad \hspace{1in} \notag \\
&&=d_{11}\left( t\right) \left( J_{a_{+},\eta }^{\varrho +\varsigma ,\psi
}\right) \left[ f_{u}-f_{v}\right] +d_{12}\left( t\right) \left( \left(
J_{a_{+},T}^{\varrho +\varsigma ,\psi }\right) \left[ f_{u}-f_{v}\right] -\mu
\left( J_{a_{+},\xi }^{\varrho +\varsigma +\delta ,\psi }\right) \left[
f_{u}-f_{v}\right] \right)  \notag \\
&&+\lambda d_{21}\left( t\right) \left( J_{a_{+},\eta }^{\varsigma ,\psi
}\right) \left[ u-v\right] -\lambda d_{22}\left( t\right) \left( \left(
J_{a_{+},T}^{\varsigma ,\psi }\right) \left[ u-v\right] -\mu \left( J_{a_{+},\xi
}^{\varsigma +\delta ,\psi }\right) \left[ u-v\right] \right),  \label{EQ64}
\end{eqnarray}%
for all $t\in \left[ a,T\right] $, which implies%
\begin{equation}
\left \vert \phi _{u}\left( f\right) -\phi _{v}\left( f\right) \right \vert
\leq \left( \rho _{11}+L_{1}\rho _{12}\right) \sup_{t\in \left[ a,T\right]
}\left \vert u\left( t\right) -v\left( t\right) \right \vert +\rho
_{12}L_{2}\sup_{t\in \left[ a,T\right] }\left \vert \left(
^{c}D_{a+,t}^{\delta ,\psi }\right) \left[ u-v %
\right] \right \vert .  \label{EQ65}
\end{equation}
Hence, we get%
\begin{eqnarray}
&&\left \vert \left( \Psi u\right) \left( t\right) -\left( \Psi v\right)
\left( t\right) \right \vert  \hspace{1in}\hspace{1in}\qquad \hspace{1in} \notag \\
 && \leq \lambda \left \vert \sup_{t\in \left[ a,T%
	\right] }\left( J_{a_{+},t}^{\varsigma ,\psi }\right) \left[ 1\right] \right
\vert \sup_{t\in \left[ a,T\right] }\left \vert u\left( t\right) -v\left(
t\right) \right \vert +\sup_{t\in \left[ a,T\right] }\left \vert \left( J_{a_{+},t}^{\varrho
	+\varsigma ,\psi }\right) \left[ 1\right] \right \vert \notag \\
&& \left( L_{1}\sup_{t\in %
	\left[ a,T\right] }\left \vert u\left( t\right) -v\left( t\right) \right
\vert +L_{2}\sup_{t\in \left[ a,T\right] }\left \vert \left(
^{c}D_{a+,t}^{\delta ,\psi }\right) \left[ u-v\right] \right \vert \right)
+\sup_{t\in \left[ a,T\right] }\left \vert \phi _{u}\left( f\right) -\phi
_{v}\left( f\right) \right \vert. \ \ \ \ \ \ \label{EQ66}
\end{eqnarray}%
Consequently,
\begin{equation}
\left \Vert \left( \Psi u\right) -\left( \Psi v\right) \right \Vert _{\infty
}\leq \varsigma _{11}\sup_{t\in \left[ a,T\right] }\left \vert u\left(
t\right) -v\left( t\right) \right \vert +\varsigma _{12}\sup_{t\in \left[ a,T%
	\right] }\left \vert \left( ^{c}D_{a+,t}^{\delta ,\psi }\right) \left[ u-v%
\right] \right \vert .  \label{EQ67}
\end{equation}
A similar argument shows that%
\begin{eqnarray}
&&\left \vert \left( ^{c}D_{a+,t}^{\delta ,\psi }\right) \left[ \left( \Psi
u\right) -\left( \Psi v\right) \right] \right \vert  \hspace{1in}\hspace{1in}\qquad \hspace{1in} \notag \\
&&=\lambda \left \vert
\left( J_{a+,t}^{\varsigma -\delta ,\psi }\right) \left[ u-v\right] \right \vert
+\left \vert \left( J_{a+,t}^{\varrho +\varsigma -\delta ,\psi }\right) \left[
f_{u}-f_{v}\right] \right \vert +\left \vert \left( ^{c}D_{a+,t}^{\delta
	,\psi }\right) \left[ \phi _{u}\left( f\right) -\phi _{v}\left( f\right) %
\right] \right \vert ,  \label{EQ68}
\end{eqnarray}%
where%
\begin{eqnarray}
&&\sup_{t\in \left[ a,T\right] }\left \vert \left( ^{c}D_{a+,t}^{\delta ,\psi
}\right) \left[ \phi _{u}\left( f\right) -\phi _{v}\left( f\right) \right]
\right \vert \hspace{1in}\hspace{1in}\qquad \hspace{1in} \notag \\
&&\leq \left( \rho _{21}+L_{1}\rho _{22}\right) \sup_{t\in \left[
	a,T\right] }\left \vert u\left( t\right) -v\left( t\right) \right \vert
+\rho _{22}L_{2}\sup_{t\in \left[ a,T\right] }\left \vert \left(
^{c}D_{a+,t}^{\delta ,\psi }\right) \left[ u-v\right] \right \vert +\rho
_{22}L_{0}.  \label{EQ69}
\end{eqnarray}
Combining\ (\ref{EQ68}) and (\ref{EQ69}), we obtain%
\begin{equation}
\left \Vert \left( ^{c}D_{a+,t}^{\delta ,\psi }\right) \left( \Psi u\right)
-\left( ^{c}D_{a+,t}^{\delta ,\psi }\right) \left( \Psi v\right) \right
\Vert _{\infty }\leq \varsigma _{21}\sup_{t\in \left[ a,T\right] }\left
\vert u\left( t\right) -v\left( t\right) \right \vert +\varsigma
_{22}\sup_{t\in \left[ a,T\right] }\left \vert \left( ^{c}D_{a+,t}^{\delta
	,\psi }\right) \left[ u-v\right] \right \vert .  \label{EQ70}
\end{equation}
Consequently, by (\ref{EQ67}) and (\ref{EQ70}), we have
\begin{equation}
\left \Vert \left( \Psi u\right) -\left( \Psi v\right) \right \Vert _{E}\leq
\varsigma \left \Vert u-v\right \Vert _{E}  \label{EQ71}
\end{equation}%
and choose $\varsigma =\max \left \{ \varsigma _{11},\varsigma
_{12},\varsigma _{21},\varsigma _{22}\right \} <1.\ $Hence, the operator $%
\Psi $ is a contraction,\ therefore $\Psi $ maps bounded sets into bounded
sets in $E$. Thus, the conclusion of the theorem follows by the contraction
mapping principle.
\end{proof}

For simplicity of presentation, we let%
\begin{equation}
\Lambda _{11}=\left( J_{a+,T}^{\varrho +\varsigma ,\psi }\right) \left[ 1\right]
+\left( J_{a+,T}^{\varrho +\varsigma -\delta ,\psi }\right) \left[ 1\right]
+\left( d_{11}\left( T\right) +\left( ^{c}D_{a+,T}^{\delta ,\psi }\right) %
\left[ d_{11}\right] \right) \left( J_{a_{+},\eta }^{\varrho +\varsigma ,\psi
}\right) \left[ 1\right] ,  \label{EQ72}
\end{equation}%
\begin{equation}
\Lambda _{12}=\left( d_{12}\left( T\right) +\left( ^{c}D_{a+,T}^{\delta
,\psi }\right) \left[ d_{12}\right] \right) \left( \left(
J_{a_{+},T}^{\varrho +\varsigma ,\psi }\right) \left[ 1\right] -\mu \left(
J_{a_{+},\xi }^{\varrho +\varsigma +\delta ,\psi }\right) \left[ 1\right] \right)
,  \label{EQ73}
\end{equation}

\begin{equation}
\Lambda _{21}=\left( J_{a+,T}^{\varsigma ,\psi }\right) \left[ 1\right]
+d_{21}\left( T\right) \left( J_{a_{+},\eta }^{\varsigma ,\psi }\right) \left[ 1%
\right] +d_{22}\left( T\right) \left( \left( J_{a_{+},T}^{\varsigma ,\psi
}\right) \left[ 1\right] -\mu \left( J_{a_{+},\xi }^{\varsigma +\delta ,\psi
}\right) \left[ 1\right] \right) ,  \label{EQ74}
\end{equation}%
\begin{equation}
\Lambda _{22}=\left( J_{a+,T}^{\varsigma -\delta ,\psi }\right) \left[ 1\right]
+\left( ^{c}D_{a+,T}^{\delta ,\psi }\right) \left[ d_{21}\right] \left(
J_{a_{+},\eta }^{\varsigma ,\psi }\right) \left[ 1\right] +\left(
^{c}D_{a+,T}^{\delta ,\psi }\right) \left[ d_{22}\right] \left( \left(
J_{a_{+},T}^{\varsigma ,\psi }\right) \left[ 1\right] -\mu \left( J_{a_{+},\xi
}^{\varsigma +\delta ,\psi }\right) \left[ 1\right] \right) .  \label{EQ75}
\end{equation}
We consider the space defined by (\ref{EQ46}) equipped with the norm
\begin{equation}
\left \Vert u\right \Vert _{E}=\left \Vert u\right \Vert _{\infty }+\left
\Vert \left( ^{c}D_{a+,t}^{\delta ,\psi }\right) \left[ u\right] \right
\Vert _{\infty }.  \label{EQ76}
\end{equation}
It is easy to know that $\left( E,\left \Vert .\right \Vert _{E}\right) $
is a Banach space with norm (\ref{EQ76}).
On this space, by virtue of Lemma \ref{L8}, we may define the operator $\Psi
:E\longrightarrow E$ by%
\begin{equation}
\left( \Psi u\right) \left( t\right) =\left( \Psi _{1}u\right) \left(
t\right) +\left( \Psi _{2}u\right) \left( t\right) =\left( -\lambda \left(
J_{a_{+},t}^{\varsigma ,\psi }\right) \left[ u\right] +\left(
J_{a_{+},t}^{\varrho +\varsigma ,\psi }\right) \left[ f_{u}\right] +\phi
_{u}\left( f\right) \right) ,  \label{EQ77}
\end{equation}%
where $\Psi _{1}$ and $\Psi _{2}$ the two operators defined on $B_{r}$ by%
\begin{equation}
\left( \Psi _{1}u\right) \left( t\right) =\left( J_{a+,t}^{\varrho +\varsigma
,\psi }\right) \left[ f_{u}\right] +d_{11}\left( t\right) \left(
J_{a_{+},\eta }^{\varrho +\varsigma ,\psi }\right) \left[ f_{u}\right]
+d_{12}\left( t\right) \left( \left( J_{a_{+},T}^{\varrho +\varsigma ,\psi
}\right) \left[ f_{u}\right] -\mu \left( J_{a_{+},\xi }^{\varrho +\varsigma
+\delta ,\psi }\right) \left[ f_{u}\right] \right)  \label{EQ78}
\end{equation}%
and%
\begin{equation}
\left( \Psi _{2}u\right) \left( t\right) =-\lambda \left( J_{a+,t}^{\varsigma
,\psi }\right) \left[ u\right] +\lambda d_{21}\left( t\right) \left(
J_{a_{+},\eta }^{\varsigma ,\psi }\right) \left[ u\right] -\lambda d_{22}\left(
t\right) \left( \left( J_{a_{+},T}^{\varsigma ,\psi }\right) \left[ u\right]
-\mu \left( J_{a_{+},\xi }^{\varsigma +\delta ,\psi }\right) \left[ u\right]
\right) ,  \label{EQ79}
\end{equation}%
where $d_{ij}\left( t\right) $ defined by (\ref{EQ34}) and (\ref{EQ35}).

Applying $\left( ^{c}D_{a+,t}^{\delta ,\psi }\right) $ on both sides of (\ref%
{EQ78}) and (\ref{EQ79}), we have%
\begin{eqnarray}
\left( ^{c}D_{a+,t}^{\delta ,\psi }\right) \left[ \Psi _{1}u\right]
&=&\left( J_{a+,t}^{\varrho +\varsigma -\delta ,\psi }\right) \left[ f_{u}\right]
+\left( ^{c}D_{a+,t}^{\delta ,\psi }\right) \left[ d_{11}\right] \left(
J_{a_{+},\eta }^{\varrho +\varsigma ,\psi }\right) \left[ f_{u}\right] \notag \\
&&+\left( ^{c}D_{a+,t}^{\delta ,\psi }\right) \left[ d_{12}\right] \left(
\left( J_{a_{+},T}^{\varrho +\varsigma ,\psi }\right) \left[ f_{u}\right] -\mu
\left( J_{a_{+},\xi }^{\varrho +\varsigma +\delta ,\psi }\right) \left[ f_{u}%
\right] \right)  \label{EQ80}
\end{eqnarray}%
and%
\begin{eqnarray}
\left( ^{c}D_{a+,t}^{\delta ,\psi }\right) \left[ \Psi _{2}u\right]
&=&-\lambda \left( J_{a+,t}^{\varsigma -\delta ,\psi }\right) \left[ u\right]
+\lambda \left( ^{c}D_{a+,t}^{\delta ,\psi }\right) \left[ d_{21}\right]
\left( J_{a_{+},\eta }^{\varsigma ,\psi }\right) \left[ u\right]  \notag \\
&&-\lambda \left( ^{c}D_{a+,t}^{\delta ,\psi }\right) \left[ d_{22}\right]
\left( \left( J_{a_{+},T}^{\varsigma ,\psi }\right) \left[ u\right] -\mu \left(
J_{a_{+},\xi }^{\varsigma +\delta ,\psi }\right) \left[ u\right] \right), \label{EQ81}
\end{eqnarray}
Thus, $\Psi $ is well-defined because $\Psi _{1}$ and $\Psi _{2}$ are
well-defined. The continuity of the functional $f_{u}$ confirms the
continuity of $\left( \Psi u\right) (t)$ and $\left( ^{c}D_{a+,t}^{\delta
,\psi }\right) \left[ \Psi u\right] (t)$, for each $t\in \left[ a,T\right] .$
Hence the operator $\Psi $ maps $E$ into itself.

In what follows, we utilize fixed point techniques to demonstrate the key results of this paper. In light of Lemma \ref{L8}, we turn problem (\ref{EQ36}) as
\begin{equation}
u=\Psi u,\ u\in E.  \label{EQ82}
\end{equation}
Notice that problem (\ref{EQ36}) has solutions if the operator $\Psi $ in (%
\ref{EQ82}) has fixed points. Doubtless that the fixed points of $\Psi $
are solutions of (\ref{EQ1}).
Consider the operator $\Psi :E\longrightarrow E.\ $For $u,v\in B_{r},$ we find that%
\begin{equation}
\left \Vert \Psi u\right \Vert _{E}=\left \Vert \Psi _{1}u\right \Vert
_{E}+\left \Vert \Psi _{2}u\right \Vert _{E}.  \label{EQ83}
\end{equation}

\begin{theorem}
\label{T4}Assume that $f:[a,T]\times \mathbb{R}\times\mathbb{R}\longrightarrow
\mathbb{R}^{+}$ is a continuous function and the assumption (A$_{3}$) holds. If
\begin{equation}
0<\lambda \left( \Lambda _{21}+\Lambda _{22}\right) <1,  \label{EQ84}
\end{equation}
then, the problem (\ref{EQ1}) has at least one fixed point on $[a,T]$.
\end{theorem}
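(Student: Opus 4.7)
The plan is to apply Krasnoselskii's theorem (Theorem \ref{T2}) to the splitting $\Psi = \Psi_{1} + \Psi_{2}$ already prepared in (\ref{EQ78})--(\ref{EQ79}), on a closed ball $B_{r} = \{u \in E : \|u\|_{E} \le r\}$ in the Banach space $(E,\|\cdot\|_{E})$ with the additive norm (\ref{EQ76}). The role of assumption (A$_{3}$) is to control the nonlinear piece $\Psi_{1}$ uniformly through $|f_{u}(t)| \le L$, while the smallness hypothesis (\ref{EQ84}) is designed precisely to make the linear piece $\Psi_{2}$ a contraction.

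The first step is to choose $r$. Using (A$_{3}$) and Lemma \ref{L3}, the constants (\ref{EQ72})--(\ref{EQ73}) are tailored so that $\Psi_{1}$ (from (\ref{EQ78})) and its $\delta$-fractional derivative (from (\ref{EQ80})) satisfy
\[
\|\Psi_{1} u\|_{\infty} + \|({}^{c}D_{a+,t}^{\delta,\psi})[\Psi_{1} u]\|_{\infty} \le L(\Lambda_{11} + \Lambda_{12}),
\]
while the constants (\ref{EQ74})--(\ref{EQ75}), linearity of $\Psi_{2}$ in $u$, and Lemma \ref{L3} give
\[
\|\Psi_{2} v\|_{E} \le \lambda(\Lambda_{21} + \Lambda_{22})\,\|v\|_{E}.
\]
Choosing $r \ge L(\Lambda_{11} + \Lambda_{12})/(1 - \lambda(\Lambda_{21} + \Lambda_{22}))$, which is feasible thanks to (\ref{EQ84}), one verifies $\Psi_{1} u + \Psi_{2} v \in B_{r}$ for all $u, v \in B_{r}$, giving condition (i) of Theorem \ref{T2}. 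The same linear estimate also gives $\|\Psi_{2} u - \Psi_{2} v\|_{E} \le \lambda(\Lambda_{21} + \Lambda_{22})\|u - v\|_{E}$, so $\Psi_{2}$ is a contraction, which is condition (iii).

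The main obstacle is establishing that $\Psi_{1}$ is compact and continuous on $B_{r}$, i.e.\ condition (ii). Continuity follows from continuity of $f$, continuity of the coefficients $d_{11}, d_{12}$ and their $\delta$-fractional derivatives on $[a,T]$, and dominated convergence inside the $\psi$-fractional integrals in (\ref{EQ78}) and (\ref{EQ80}). Uniform boundedness of $\Psi_{1}(B_{r})$ in $E$ is already in hand from the bound above, so compactness reduces via the Arzel\`a--Ascoli theorem to equicontinuity of both $\Psi_{1} u$ and $({}^{c}D_{a+,t}^{\delta,\psi})[\Psi_{1} u]$, uniformly in $u \in B_{r}$. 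For $a \le t_{1} < t_{2} \le T$, the terms of the form $(J_{a+,t}^{\alpha,\psi})[f_{u}]$ appearing in (\ref{EQ78}) and (\ref{EQ80}) (with $\alpha \in \{\varrho+\varsigma,\ \varrho+\varsigma-\delta\}$) are handled directly by Lemma \ref{L5}, yielding a bound
\[
\bigl|(J_{a+,t_{2}}^{\alpha,\psi})[f_{u}] - (J_{a+,t_{1}}^{\alpha,\psi})[f_{u}]\bigr| \le \frac{2L}{\Gamma(\alpha+1)}\bigl(\psi(t_{2}) - \psi(t_{1})\bigr)^{\alpha},
\]
which tends to $0$ as $t_{2} - t_{1} \to 0$ uniformly in $u \in B_{r}$. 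The remaining terms have the form $g(t) \cdot I$, where $g(t)$ is one of $d_{11}(t), d_{12}(t), ({}^{c}D_{a+,t}^{\delta,\psi})[d_{11}], ({}^{c}D_{a+,t}^{\delta,\psi})[d_{12}]$ and $I$ is a $t$-independent number (an integral over $[a,\eta]$, $[a,T]$ or $[a,\xi]$ of a bounded function); these are controlled by uniform continuity of $g$ on $[a,T]$, again giving a modulus independent of $u$.

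With conditions (i), (ii), (iii) of Theorem \ref{T2} all verified, Krasnoselskii's theorem produces a fixed point of $\Psi = \Psi_{1} + \Psi_{2}$ in $B_{r}$, which by Lemma \ref{L8} is a solution of the $\psi$-Caputo fractional boundary value problem (\ref{EQ1}) on $[a,T]$.
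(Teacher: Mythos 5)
Your proposal is correct and follows essentially the same route as the paper: the same splitting $\Psi=\Psi_{1}+\Psi_{2}$ from (\ref{EQ78})--(\ref{EQ79}), the same radius condition $r\ge L(\Lambda_{11}+\Lambda_{12})/(1-\lambda(\Lambda_{21}+\Lambda_{22}))$ for condition (i), the same contraction constant $\lambda(\Lambda_{21}+\Lambda_{22})$ for $\Psi_{2}$, and continuity plus Arzel\`a--Ascoli equicontinuity for the compactness of $\Psi_{1}$. Your explicit appeal to Lemma \ref{L5} and to the uniform continuity of the coefficients $d_{ij}$ and their $\delta$-derivatives makes the equicontinuity step somewhat more transparent than the paper's Step 4, but the argument is the same.
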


\begin{proof}
	The proof will be specified in sundry steps:

\textbf{Step 1.} Firstly, we prove that, for any $u,v\in B_{r}$, $\Psi
_{1}u+\Psi _{2}v\in B_{r}$, it follows that
\begin{eqnarray*}
&&\left \Vert \left( \Psi _{1}u\right) \right \Vert _{E}=\left \Vert \left(
\Psi _{1}u\right) \right \Vert _{\infty }+\left \Vert \left(
^{c}D_{a+,t}^{\delta ,\psi }\right) \left( \Psi _{1}u\right) \right \Vert
_{\infty } \hspace{1in}\hspace{1in}\qquad \hspace{1in} \notag \\
&&\leq\left[ \left( J_{a+,T}^{\varrho +\varsigma ,\psi }\right) \left[ 1\right]
+d_{11}\left( T\right) \left( J_{a_{+},\eta }^{\varrho +\varsigma ,\psi }\right) %
\left[ 1\right] +d_{12}\left( T\right) \left( \left( J_{a_{+},T}^{\varrho
+\varsigma ,\psi }\right) \left[ 1\right] -\mu \left( J_{a_{+},\xi }^{\varrho
+\varsigma +\delta ,\psi }\right) \left[ 1\right] \right) \right] \left \Vert
f_{u}\right \Vert _{\infty } \notag \\
&&+\left[ \left( J_{a+,T}^{\varrho +\varsigma -\delta ,\psi }\right) \left[ 1 %
\right] +\left( ^{c}D_{a+,t}^{\delta ,\psi }\right) \left[ d_{11}\right]
\left( J_{a_{+},\eta }^{\varrho +\varsigma ,\psi }\right) \left[ 1\right] \right] \left \Vert
f_{u}\right \Vert _{\infty } \notag \\
&&+\left[\left(
^{c}D_{a+,t}^{\delta ,\psi }\right) \left[ d_{12}\right] \left( \left(
J_{a_{+},T}^{\varrho +\varsigma ,\psi }\right) \left[ 1\right] -\mu \left(
J_{a_{+},\xi }^{\varrho +\varsigma +\delta ,\psi }\right) \left[ 1\right] \right) %
\right] \left \Vert f_{u}\right \Vert _{\infty },
\end{eqnarray*}
\begin{eqnarray}
\label{EQ85}
\end{eqnarray}
we obtain
\begin{eqnarray}
&&\left \Vert \left( \Psi _{1}u\right) \right \Vert _{E}\times \left \Vert
f_{u}\right \Vert _{\infty}^{-1}  \hspace{1in}\hspace{1in}\qquad \hspace{1in} \notag \\
&&=\left( J_{a+,T}^{\varrho +\varsigma ,\psi }\right)
\left[ 1\right] +\left( J_{a+,T}^{\varrho +\varsigma -\delta ,\psi }\right) \left[
1\right] +\left( d_{11}\left( T\right) +\left( ^{c}D_{a+,t}^{\delta ,\psi
}\right) \left[ d_{11}\right] \right) \left( J_{a_{+},\eta }^{\varrho +\varsigma
,\psi }\right) \left[ 1\right]  \notag \\
&&+\left( d_{12}\left( T\right) +\left( ^{c}D_{a+,t}^{\delta ,\psi }\right)
\left[ d_{12}\right] \right) \left( \left( J_{a_{+},T}^{\varrho +\varsigma ,\psi
}\right) \left[ 1\right] -\mu \left( J_{a_{+},\xi }^{\varrho +\varsigma +\delta
,\psi }\right) \left[ 1\right] \right).  \label{EQ86}
\end{eqnarray}%
Then, we have%
\begin{equation}
\left \Vert \left( \Psi _{1}u\right) \right \Vert _{E}\leq \left( \Lambda
_{11}+\Lambda _{12}\right) \times \left \Vert f_{u}\right \Vert _{\infty},\
\Lambda _{11}+\Lambda _{12}<\infty ,  \label{EQ87}
\end{equation}%
which yields that $\Psi _{1}$ is bounded. On the opposite side
\begin{eqnarray}
&&\frac{1}{\lambda }\left \Vert \left( \Psi _{2}v\right) \right \Vert
_{E}\times \left \Vert v\right \Vert _{E}^{-1} \hspace{1in}\hspace{1in}%
\qquad \hspace{1in} \notag \\
&&\leq \left( J_{a+,T}^{\varsigma ,\psi }\right) +d_{21}\left( T\right) \left(
J_{a_{+},\eta }^{\varsigma ,\psi }\right) \left[ 1\right] +d_{22}\left( T\right)
\left( \left( J_{a_{+},T}^{\varsigma ,\psi }\right) \left[ 1\right] -\mu \left(
J_{a_{+},\xi }^{\varsigma +\delta ,\psi }\right) \left[ 1\right] \right) \notag \\
&&+\left( J_{a+,T}^{\varsigma -\delta ,\psi }\right) \left[ 1\right] +\left(
^{c}D_{a+,t}^{\delta ,\psi }\right) \left[ d_{21}\right] \left(
J_{a_{+},\eta }^{\varsigma ,\psi }\right) \left[ 1\right] +\left(
^{c}D_{a+,t}^{\delta ,\psi }\right) \left[ d_{22}\right] \left( \left(
J_{a_{+},T}^{\varsigma ,\psi }\right) \left[ 1\right] -\mu \left( J_{a_{+},\xi
}^{\varsigma +\delta ,\psi }\right) \left[ 1\right] \right), \notag \\
\end{eqnarray}%
\item which implies
\begin{equation}
\left \Vert \left( \Psi _{2}v\right) \right \Vert _{E}\leq \lambda \left(
\Lambda _{21}+\Lambda _{22}\right) \times \left \Vert v\right \Vert _{E}.
\label{EQ89}
\end{equation}
Then, from (\ref{EQ87}) and (\ref{EQ89}), it follows that%
\begin{equation}
\left \Vert \Psi \left( u,v\right) \right \Vert _{E}\leq \left( \Lambda
_{11}+\Lambda _{12}\right) \times \left \Vert f_{u}\right \Vert _{\infty}+\lambda
\left( \Lambda _{21}+\Lambda _{22}\right) \times \left \Vert v\right \Vert
_{E}.  \label{EQ90}
\end{equation}
By (A$_{3}$) and (\ref{EQ90}), we have that%
\begin{equation}
\left( \Lambda _{11}+\Lambda _{12}\right) \times \left \Vert f_{u}\right
\Vert _{\infty}+\lambda \left( \Lambda _{21}+\Lambda _{22}\right) \times \left
\Vert v\right \Vert _{E}\leq \left( \Lambda _{11}+\Lambda _{12}\right)
\times L+\lambda \left( \Lambda _{21}+\Lambda _{22}\right) \times r\leq r.
\label{EQ91}
\end{equation}%
Then%
\begin{equation}
r>\frac{\left( \Lambda _{11}+\Lambda _{12}\right) \times L}{1-\lambda \left(
\Lambda _{21}+\Lambda _{22}\right) },\ 0<\lambda \left( \Lambda
_{21}+\Lambda _{22}\right) <1,  \label{EQ92}
\end{equation}%
which concludes that $\Psi _{1}u+\Psi _{2}v\in B_{r}.$for all $u,v\in B_{r}.$

\textbf{Step 2.} Next, for $u,v\in B_{r},\ \Psi _{2}$ is a contraction. From (%
\ref{EQ79}) and (\ref{EQ81}), we have
\begin{equation}
\left \Vert \left( \Psi _{2}u\right) -\left( \Psi _{2}v\right) \right \Vert
_{E}=\left \Vert \left( \Psi _{2}u\right) -\left( \Psi _{2}v\right) \right
\Vert _{\infty }+\left \Vert \left( ^{c}D_{a+,t}^{\delta ,\psi }\right)
\left( \Psi _{2}u\right) -\left( ^{c}D_{a+,t}^{\delta ,\psi }\right) \left(
\Psi _{2}v\right) \right \Vert _{\infty },  \label{EQ93}
\end{equation}%
where%
\begin{eqnarray}
&&\left \Vert \left( \Psi _{2}u\right) -\left( \Psi _{2}v\right) \right
\Vert _{\infty }\leq  \label{EQ94} \\
&&\lambda \left( \left( J_{a+,t}^{\varsigma ,\psi }\right) \left[ 1\right]
+d_{21}\left( T\right) \left( J_{a_{+},\eta }^{\varsigma ,\psi }\right) \left[ 1%
\right] +d_{22}\left( T\right) \left( \left( J_{a_{+},T}^{\varsigma ,\psi
}\right) \left[ 1\right] -\mu \left( J_{a_{+},\xi }^{\varsigma +\delta ,\psi
}\right) \left[ 1\right] \right) \right) \left \Vert u-v\right \Vert
_{\infty }.  \notag
\end{eqnarray}%
From (\ref{EQ94}), we can write%
\begin{equation}
\left \Vert \left( \Psi _{2}u\right) -\left( \Psi _{2}v\right) \right \Vert
_{\infty }\leq \lambda \Lambda _{21}\times \left \Vert u-v\right \Vert
_{\infty }.  \label{EQ95}
\end{equation}%
On the other hand%
\begin{eqnarray*}
&&\left \Vert \left( ^{c}D_{a+,t}^{\delta ,\psi }\right) \left( \Psi
_{2}u\right) -\left( ^{c}D_{a+,t}^{\delta ,\psi }\right) \left( \Psi
_{2}v\right) \right \Vert _{\infty }\times \left \Vert u-v\right \Vert
_{\infty }^{-1}\leq \\ 
&&\lambda \left( \left( J_{a+,t}^{\varsigma -\delta ,\psi }\right) \left[ 1%
\right] +\left( ^{c}D_{a+,T}^{\delta ,\psi }\right) \left[ d_{21}\right]
\left( J_{a_{+},\eta }^{\varsigma ,\psi }\right) \left[ 1\right] +\left(
^{c}D_{a+,T}^{\delta ,\psi }\right) \left[ d_{22}\right] \left( \left(
J_{a_{+},T}^{\varsigma ,\psi }\right) \left[ 1\right] -\mu \left( J_{a_{+},\xi
}^{\varsigma +\delta ,\psi }\right) \left[ 1\right] \right) \right) ,  \notag
\end{eqnarray*}%
which yields%
\begin{equation}
\left \Vert \left( ^{c}D_{a+,t}^{\delta ,\psi }\right) \left( \Psi
_{2}u\right) -\left( ^{c}D_{a+,t}^{\delta ,\psi }\right) \left( \Psi
_{2}v\right) \right \Vert _{\infty }\leq \lambda \Lambda _{22}\times \left
\Vert u-v\right \Vert _{\infty }.  \label{EQ97}
\end{equation}%
So, using (\ref{EQ95}) and (\ref{EQ97}), it follows that%
\begin{equation}
\left \Vert \left( \Psi _{2}u\right) +\left( \Psi _{2}v\right) \right \Vert
_{E}\leq \lambda \left( \Lambda _{21}+\Lambda _{22}\right) \times \left
\Vert u-v\right \Vert _{E}.  \label{EQ98}
\end{equation}%
and choose $0<\lambda \left( \Lambda _{21}+\Lambda _{22}\right) <1.\ $Hence,
the operator $\Psi _{2}$ is a contraction.

\textbf{Step 3.} The continuity of $\Psi _{1}$ follows from that of $f_{u}.\ $%
Let $\left \{ u_{n}\right \} $ be a sequence such that $u_{n}\longrightarrow
u$ in $E$. Then for each $t\in \left[ a,T\right] $
\begin{eqnarray*}
\left \vert \left( \Psi _{1}u_{n}\right) \left( t\right) -\left( \Psi
_{1}u\right) \left( t\right) \right \vert &=&\left( J_{a+,t}^{\varrho +\varsigma
,\psi }\right) \left[ f_{u_{n}}-f_{u}\right] +d_{11}\left( t\right) \left(
J_{a_{+},\eta }^{\varrho +\varsigma ,\psi }\right) \left[ f_{u_{n}}-f_{u}\right] \\
&&+d_{12}\left( t\right) \left( \left( J_{a_{+},T}^{\varrho +\varsigma ,\psi
}\right) \left[ f_{u_{n}}-f_{u}\right] -\mu \left( J_{a_{+},\xi }^{\varrho
+\varsigma +\delta ,\psi }\right) \left[ f_{u_{n}}-f_{u}\right] \right) .  \notag
\end{eqnarray*}%
By last equality with equation (\ref{EQ78}), we can write%
\begin{eqnarray*}
&&\left \vert \left( \Psi _{1}u_{n}\right) \left( t\right) -\left( \Psi
_{1}u\right) \left( t\right) \right \vert \leq \\ 
&&\left[\left( J_{a+,t}^{\varrho +\varsigma ,\psi }\right) \left[ 1\right]
+d_{11}\left( t\right) \left( J_{a_{+},\eta }^{\varrho +\varsigma ,\psi }\right) %
\left[ 1\right] +d_{12}\left( t\right) \left( \left( J_{a_{+},T}^{\varrho
+\varsigma ,\psi }\right) \left[ 1\right] -\mu \left( J_{a_{+},\xi }^{\varrho
+\varsigma +\delta ,\psi }\right) \left[ 1\right] \right)\right] \sup_{t\in \left[ a,T %
\right] }\left \vert f_{u_{n}}-f_{u}\right \vert .  \notag
\end{eqnarray*}%
It follows from (\ref{EQ85}) that%
\begin{eqnarray}
&&\left \Vert \left( \Psi _{1}u_{n}\right) -\left( \Psi _{1}u\right) \right
\Vert _{E}=\left \Vert \left( \Psi _{1}u_{n}\right) -\left( \Psi
_{1}u\right) \right \Vert _{\infty }+\left \Vert \left( ^{c}D_{a+,t}^{\delta
,\psi }\right) \left( \left( \Psi _{1}u_{n}\right) -\left( \Psi _{1}u\right)
\right) \right \Vert _{\infty }\leq \notag \\ 
&&\left[ \left( J_{a+,T}^{\varrho +\varsigma ,\psi }\right) +d_{11}\left(
T\right) \left( J_{a_{+},\eta }^{\varrho +\varsigma ,\psi }\right) +d_{12}\left(
T\right) \left( \left( J_{a_{+},T}^{\varrho +\varsigma ,\psi }\right) -\mu \left(
J_{a_{+},\xi }^{\varrho +\varsigma +\delta ,\psi }\right) \right) \right] \left
\Vert f_{u_{n}}-f_{u}\right \Vert _{\infty } \notag \\
&&+\left[ \left( J_{a+,T}^{\varrho +\varsigma -\delta ,\psi }\right) +\left(
^{c}D_{a+,t}^{\delta ,\psi }\right) \left[ d_{11}\right] \left(
J_{a_{+},\eta }^{\varrho +\varsigma ,\psi }\right)\right. \notag \\
 &&+ \left. \left( ^{c}D_{a+,t}^{\delta
,\psi }\right) \left[ d_{12}\right] \left( \left( J_{a_{+},T}^{\varrho +\varsigma
,\psi }\right) -\mu \left( J_{a_{+},\xi }^{\varrho +\varsigma +\delta ,\psi
}\right) \right) \right] \left \Vert f_{u_{n}}-f_{u}\right \Vert _{\infty }.
\label{EQ102}
\end{eqnarray}%
By (\ref{EQ102}), we have%
\begin{eqnarray}
&&\left \Vert \left( \Psi _{1}u_{n}\right) -\left( \Psi _{1}u\right) \right
\Vert _{E}\left \Vert f_{u_{n}}-f_{u}\right \Vert _{\infty}^{-1}\leq \notag \\
&&\left( J_{a+,T}^{\varrho +\varsigma ,\psi }\left[ 1\right] \right) +\left(
J_{a+,T}^{\varrho +\varsigma -\delta ,\psi }\left[ 1\right] \right) +\left(
d_{11}\left( T\right) +\left( ^{c}D_{a+,t}^{\delta ,\psi }\right) \left[
d_{11}\right] \right) \left( J_{a_{+},\eta }^{\varrho +\varsigma ,\psi }\left[ 1%
\right] \right)  \notag \\
&&+\left( d_{12}\left( T\right) +\left( ^{c}D_{a+,t}^{\delta ,\psi }\right)
\left[ d_{12}\right] \right) \left( \left( J_{a_{+},T}^{\varrho +\varsigma ,\psi }%
\left[ 1\right] \right) -\mu \left( J_{a_{+},\xi }^{\varrho +\varsigma +\delta
,\psi }\left[ 1\right] \right) \right) . \label{EQ103}
\end{eqnarray}%
Consequently, by (\ref{EQ103}), we have%
\begin{equation}
\left \Vert \left( \Psi _{1}u_{n}\right) -\left( \Psi _{1}u\right) \right
\Vert _{\infty}\leq \left( \Lambda _{11}+\Lambda _{12}\right) \times \left \Vert
f_{u_{n}}-f_{u}\right \Vert _{\infty},\ \Lambda _{11}+\Lambda _{12}<\infty .
\label{EQ104}
\end{equation}%
Since $f_{u}$ is a continuous function, then by the Lebesgue dominated
convergence theorem which implies%
\begin{equation}
\left \Vert \left( \Psi _{1}u_{n}\right) -\left( \Psi _{1}u\right) \right
\Vert _{E}\longrightarrow 0\text{ as }n\longrightarrow \infty .
\label{EQ105}
\end{equation}
Furthermore, $\Psi _{1}$ is uniformly bounded on $B_{r}$ as $%
\left
\Vert \left( \Psi _{1}u\right) \right \Vert _{E}\leq \left( \Lambda
_{11}+\Lambda _{12}\right) \times \left \Vert f_{u}\right \Vert _{\infty},$ due to (\ref{EQ87}).

\textbf{Step 4.} Finally, we establish the compactness of $\Psi _{1}.$ Let $%
u,v\in B_{r}$, for $t_{1},t_{2}\in \left[ a,T\right] ,~t_{1}<t_{2},$ we have%
\begin{eqnarray}
&&\left \Vert \left( \Psi _{1}u\right) \left( t_{2}\right) -\left( \Psi
_{1}u\right) \left( t_{1}\right) \right \Vert _{\infty } \notag \\ 
&&\leq \left[ \left( J_{a+,t_{2}}^{\varrho +\varsigma ,\psi }\right) \left[ 1\right]
+d_{11}\left( t_{2}\right) \left( J_{a_{+},\eta }^{\varrho +\varsigma ,\psi
}\right) \left[ 1\right] +d_{12}\left( t_{2}\right) \left( \left(
J_{a_{+},T}^{\varrho +\varsigma ,\psi }\right) \left[ 1\right] -\mu \left(
J_{a_{+},\xi }^{\varrho +\varsigma +\delta ,\psi }\right) \left[ 1\right] \right) %
\right] \left \Vert f_{u}\right \Vert _{\infty }  \notag \\
&&-\left[ \left( J_{a+,t_{1}}^{\varrho +\varsigma ,\psi }\right) \left[ 1\right]
+d_{11}\left( t_{1}\right) \left( J_{a_{+},\eta }^{\varrho +\varsigma ,\psi
}\right) \left[ 1\right] +d_{12}\left( t_{1}\right) \left( \left(
J_{a_{+},T}^{\varrho +\varsigma ,\psi }\right) \left[ 1\right] -\mu \left(
J_{a_{+},\xi }^{\varrho +\varsigma +\delta ,\psi }\right) \right) \left[ 1\right] %
\right] \left \Vert f_{u}\right \Vert _{\infty }  \notag \\
&&\leq \left[\left( \left( J_{a+,t_{2}}^{\varrho +\varsigma ,\psi }\right) \left[ 1\right]
-\left( J_{a+,t_{1}}^{\varrho +\varsigma ,\psi }\right) \left[ 1\right] \right)
+\Lambda _{41}\left( J_{a_{+},\eta }^{\varrho +\varsigma ,\psi }\right) \left[ 1%
\right] \right. \notag \\
&&\left.+\Lambda _{42}\left( \left( J_{a_{+},T}^{\varrho +\varsigma ,\psi
}\right) \left[ 1\right] -\mu \left( J_{a_{+},\xi }^{\varrho +\varsigma +\delta
,\psi }\right) \left[ 1\right] \right)\right]\left \Vert f_{u}\right \Vert _{\infty }.  \label{EQ107}
\end{eqnarray}%
On the other hand%
\begin{eqnarray}
&&\left \Vert \left( ^{c}D_{a+,t}^{\delta ,\psi }\right) \left( \Psi
_{1}u\right) \left( t_{2}\right) -\left( ^{c}D_{a+,t}^{\delta ,\psi }\right)
\left( \Psi _{1}u\right) \left( t_{1}\right) \right \Vert _{\infty } \notag \\
&&\leq\left[ \left( J_{a+,t_{2}}^{\varrho +\varsigma -\delta ,\psi }\right) +\left(
^{c}D_{a+,t_{2}}^{\delta ,\psi }\right) \left[ d_{11}\right] \left(
J_{a_{+},\eta }^{\varrho +\varsigma ,\psi }\right) +\left(
^{c}D_{a+,t_{2}}^{\delta ,\psi }\right) \left[ d_{12}\right] \left( \left(
J_{a_{+},T}^{\varrho +\varsigma ,\psi }\right) -\mu \left( J_{a_{+},\xi }^{\varrho
+\varsigma +\delta ,\psi }\right) \right) \right] \left \Vert f_{u}\right \Vert
_{\infty }  \notag \\
&&-\left[ \left( J_{a+,t_{1}}^{\varrho +\varsigma -\delta ,\psi }\right) +\left(
^{c}D_{a+,t_{1}}^{\delta ,\psi }\right) \left[ d_{11}\right] \left(
J_{a_{+},\eta }^{\varrho +\varsigma ,\psi }\right) +\left(
^{c}D_{a+,t_{1}}^{\delta ,\psi }\right) \left[ d_{12}\right] \left( \left(
J_{a_{+},T}^{\varrho +\varsigma ,\psi }\right) -\mu \left( J_{a_{+},\xi }^{\varrho
+\varsigma +\delta ,\psi }\right) \right) \right] \left \Vert f_{u}\right \Vert
_{\infty }  \notag \\
&&\left[\left( \left( J_{a+,t_{2}}^{\varrho +\varsigma -\delta ,\psi }\right) -\left(
J_{a+,t_{1}}^{\varrho +\varsigma -\delta ,\psi }\right) \right) +\Lambda
_{43}\left( J_{a_{+},\eta }^{\varrho +\varsigma ,\psi }\right) +\Lambda
_{44}\left( \left( J_{a_{+},T}^{\varrho +\varsigma ,\psi }\right) -\mu \left(
J_{a_{+},\xi }^{\varrho +\varsigma +\delta ,\psi }\right) \right)\right] \left \Vert f_{u}\right \Vert
_{\infty }. \label{EQ109}
\end{eqnarray}%
Using (\ref{EQ107}) and ( \ref{EQ109}), we get%
\begin{eqnarray}
&&\left \Vert \left( \Psi _{1}u\right) \left( t_{2}\right) -\left( \Psi
_{1}u\right) \left( t_{1}\right) \right \Vert _{E} \notag \\
&&\leq\left[\left( \left( J_{a+,t_{2}}^{\varrho +\varsigma ,\psi }\right) \left[ 1\right]
-\left( J_{a+,t_{1}}^{\varrho +\varsigma ,\psi }\right) \left[ 1\right] \right)
+\left( \left( J_{a+,t_{2}}^{\varrho +\varsigma -\delta ,\psi }\right) -\left(
J_{a+,t_{1}}^{\varrho +\varsigma -\delta ,\psi }\right) \right) \right.  \notag \\
&&\left.+\left( \Lambda _{41}+\Lambda _{43}\right) \left( J_{a_{+},\eta }^{\varrho
+\varsigma ,\psi }\right) \left[ 1\right] +\left( \Lambda _{42}+\Lambda
_{44}\right) \left( \left( J_{a_{+},T}^{\varrho +\varsigma ,\psi }\right) \left[ 1%
\right] -\mu \left( J_{a_{+},\xi }^{\varrho +\varsigma +\delta ,\psi }\right) %
\left[ 1\right] \right) \right]\left \Vert f_{u}\right \Vert
_{\infty },  \notag
\end{eqnarray}%
where%
\begin{equation*}
\Lambda _{41}=d_{11}\left( t_{2}\right) -d_{11}\left( t_{1}\right) \text{
and }\Lambda _{42}=d_{12}\left( t_{2}\right) -d_{12}\left( t_{1}\right)
\end{equation*}%
and%
\begin{equation*}
\Lambda _{43}=\left( ^{c}D_{a+,t_{21}}^{\delta ,\psi }\right) \left[ d_{11}%
\right] -\left( ^{c}D_{a+,t_{1}}^{\delta ,\psi }\right) \left[ d_{11}\right]
\text{ and }\Lambda _{44}=\left( ^{c}D_{a+,t_{2}}^{\delta ,\psi }\right) %
\left[ d_{12}\right] -\left( ^{c}D_{a+,t_{1}}^{\delta ,\psi }\right) \left[
d_{11}\right] .
\end{equation*}%
Consequently, we have%
\begin{equation}
\left \Vert \left( \Psi _{1}u\right) \left( t_{2}\right) -\left( \Psi
_{1}u\right) \left( t_{1}\right) \right \Vert \times \left \Vert f_{u}\right
\Vert ^{-1}\longrightarrow 0\text{ as }t_{1}\rightarrow t_{2}.  \label{EQ113}
\end{equation}%
Thus, $\Psi _{1}$ is relatively compact on $B_{r}$.
Hence, by the Arzela-Ascoli Theorem, $\Psi _{1}$ is completely
continuous on $B_{r}$. Therefore, according to Theorem \ref{T2}, the problem (\ref{EQ1}%
) has at least one solution on $B_{r}$. This completes the proof.

\end{proof}

\section{Stability of solutions}

\qquad Hereafter, we discuss the U-H and U-H-R stability of solutions of the FLE (\ref%
{EQ1}). In the proofs of Theorems \ref{T5} and \ref{T8}, we use  integration by parts in
the settings of $\psi $-fractional operators. Denoting%
\begin{equation}
\varphi _{1}\left( t\right) =\left( J_{a_{+},t}^{\varsigma ,\psi }\right) \left[
1\right] \text{ and }\varphi _{2}\left( t\right) =\left( J_{a_{+},t}^{\varsigma
,\psi }\right) \left[ \mathcal{K}\left( \tau ;a\right) \right] .
\label{EQ114}
\end{equation}

\begin{remark}
\label{R2}For every $\epsilon >0,$ a function $\tilde{u}\in \mathcal{C}$ is
a solution of of the inequality
\begin{equation}
\left \vert \left( ^{c}D_{a+,t}^{\varrho ,\psi }\right) \left( ^{c}D_{a+,t
}^{\varsigma ,\psi }+\lambda \right) \left[ \tilde{u}\right] -f(t,\tilde{u}
(t),^{c}D_{a+,t}^{\delta ,\psi }\left[ \tilde{u}\right] )\right \vert \leq
\epsilon \Phi \left( t\right) ,\ t\in \left[ a,T\right] ,  \label{EQ115}
\end{equation}
where $\Phi \left( t\right) \geq 0$ if and only if there exists a function $%
g\in \mathcal{C},$ (which depends on $\tilde{u}$) such that
\begin{itemize}
  \item [(\textbf{i})] $\left \vert g\left( t\right) \right \vert \leq \epsilon \Phi
\left( t\right) ,\ \forall t\in \left[ a,T\right] .\ $
  \item [(\textbf{ii})] $\left(
^{c}D_{a+,t}^{\varrho ,\psi }\right) \left( ^{c}D_{a+,t }^{\varsigma ,\psi
}+\lambda \right) \left[ \tilde{u}\right] =f(t,\tilde{u}(t),^{c}D_{a+,t}^{
\delta ,\psi }\left[ \tilde{u}\right] )+g\left( t\right) .$
\end{itemize}

\end{remark}

\begin{lemma}
\label{L9}If $\tilde{u}\in \mathcal{C}$ is a solution of the inequation (\ref%
{EQ115}) then $\tilde{u}$ is a solution of the following integral inequation
\begin{equation}
\left \vert \tilde{u}(t)-\left( -\lambda \left( J_{a_{+},t}^{\varsigma ,\psi
}\right) \left[ \tilde{u}\right] +\left( J_{a_{+},t}^{\varrho +\varsigma ,\psi
}\right) \left[ f_{\tilde{u}}\right] +\phi _{\tilde{u}}\left( f\right)
\right) \right \vert \leq C_{\Phi }\left( t\right) ,  \label{EQ116}
\end{equation}%
where%
\begin{equation}
C_{\Phi }\left( t\right) =\epsilon \left( J_{a_{+},t}^{\varrho +\varsigma ,\psi
}\right) \left[ \Phi \right] +c_{1}\left( \epsilon \Phi \right) \varphi
_{1}\left( t\right) +c_{2}\left( \epsilon \Phi \right) \varphi _{2}\left(
t\right) +c_{3}\left( \epsilon \Phi \right) ,  \label{EQ117}
\end{equation}%
where $c_{1}\left( \epsilon \Phi \right) ,\ c_{2}\left( \epsilon \Phi
\right) ,\ c_{3}\left( \epsilon \Phi \right) $ are real constants with $f_{
\tilde{u}}=\Phi $ and $C_{\Phi }$ is independent of $\tilde{u}\left(
t\right) $ and $f_{\tilde{u}}.$
\end{lemma}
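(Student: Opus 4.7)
The strategy is to invert the differential operator in (\ref{EQ115}) to obtain an integral representation of $\tilde{u}$, and then compare it with the reference integral equation (\ref{EQ36}) from Lemma \ref{L8}.

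By Remark \ref{R2}, there exists $g\in\mathcal{C}$ with $|g(t)|\le \epsilon\Phi(t)$ for all $t\in[a,T]$ such that
$$
({}^{c}D_{a+,t}^{\varrho,\psi})({}^{c}D_{a+,t}^{\varsigma,\psi}+\lambda)[\tilde{u}] = f_{\tilde{u}}(t)+g(t),
$$
so the inequality is replaced by an equality with a pointwise-bounded perturbation on the right-hand side. I would then apply $J_{a+,t}^{\varrho,\psi}$ and subsequently $J_{a+,t}^{\varsigma,\psi}$ to this identity, mirroring the manipulations in the proof of Lemma \ref{L7}: using Lemma \ref{L4} to invert each Caputo operator and the semigroup property (\ref{EQ11}). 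The crucial difference with Lemma \ref{L7} is that the boundary conditions in (\ref{EQ1}) are \emph{not} imposed on $\tilde{u}$, so the three integration constants $c_1,c_2,c_3$ remain free. The outcome is the representation
$$
\tilde{u}(t) = -\lambda J_{a+,t}^{\varsigma,\psi}[\tilde{u}] + J_{a+,t}^{\varrho+\varsigma,\psi}[f_{\tilde{u}}] + J_{a+,t}^{\varrho+\varsigma,\psi}[g] + c_1\varphi_1(t)+c_2\varphi_2(t)+c_3.
$$

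Next, I would subtract from both sides the reference expression $-\lambda J_{a+,t}^{\varsigma,\psi}[\tilde{u}] + J_{a+,t}^{\varrho+\varsigma,\psi}[f_{\tilde{u}}] + \phi_{\tilde{u}}(f)$ that appears inside the absolute value in (\ref{EQ116}). Inspection of (\ref{EQ34})--(\ref{EQ37}) reveals that $\phi_{\tilde{u}}(f)$ is itself a linear combination of $\varphi_1(t)$ and $\varphi_2(t)$ with scalar coefficients determined by boundary-type functionals of $f_{\tilde{u}}$ and $\tilde{u}$. Hence the difference collapses to $J_{a+,t}^{\varrho+\varsigma,\psi}[g]$ plus a homogeneous remainder of the form $c_1(\epsilon\Phi)\varphi_1(t)+c_2(\epsilon\Phi)\varphi_2(t)+c_3(\epsilon\Phi)$, whose coefficients are induced purely by how $g$ feeds into the boundary functionals and therefore depend only on the perturbation bound $\epsilon\Phi$. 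The triangle inequality together with the monotonicity of the $\psi$-fractional integral yields
$$
\bigl|J_{a+,t}^{\varrho+\varsigma,\psi}[g]\bigr|\le J_{a+,t}^{\varrho+\varsigma,\psi}[|g|] \le \epsilon\, J_{a+,t}^{\varrho+\varsigma,\psi}[\Phi],
$$
which, combined with the homogeneous remainder, delivers exactly the bound (\ref{EQ116}).

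The principal obstacle is the careful bookkeeping of the three integration constants: one must verify that the coefficients $c_i(\epsilon\Phi)$ extracted after the subtraction really do depend only on $\epsilon\Phi$ and not separately on $\tilde{u}$ or $f_{\tilde{u}}$. This is precisely the assertion in the statement that ``$C_\Phi$ is independent of $\tilde{u}(t)$ and $f_{\tilde{u}}$'', and it follows from the observation that in the identity obtained after the two fractional integrations, the $\tilde{u}$- and $f_{\tilde{u}}$-dependent pieces cancel exactly against the corresponding terms in $\phi_{\tilde{u}}(f)$, leaving only a homogeneous combination driven by $g$.
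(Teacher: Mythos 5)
Your proposal follows essentially the same route as the paper: invoke Remark \ref{R2} to replace the inequality by the equality with perturbation $g$, invert the two Caputo operators as in Lemma \ref{L7} to get the integral representation with constants depending linearly on the forcing term (the paper records this as $c_{j}(f_{\tilde{u}}+g)=c_{j}(f_{\tilde{u}})+c_{j}(g)$, which is exactly your cancellation of the $\tilde{u}$- and $f_{\tilde{u}}$-dependent pieces against $\phi_{\tilde{u}}(f)$), and then bound the remaining $g$-driven terms by $\epsilon\Phi$ via the triangle inequality and monotonicity of the $\psi$-fractional integral. The argument is correct and matches the paper's proof in both structure and level of detail.
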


\begin{proof}
	Let $\tilde{u}\in \mathcal{C}$ be a solution of the inequality (\ref{EQ115}%
	). Then by Remark \ref{R2}-ii, we have that%
	\begin{equation}
	\tilde{u}\left( t\right) =-\lambda \left( J_{a_{+},t}^{\varsigma ,\psi }\right) %
	\left[ \tilde{u}\right] +\left( J_{a_{+},t}^{\varrho +\varsigma ,\psi }\right) %
	\left[ f_{\tilde{u}}+g\right] +\phi _{\tilde{u}}\left( f_{\tilde{u}%
	}+g\right) ,  \label{EQ118}
	\end{equation}%
	where%
	\begin{equation}
	\phi _{\tilde{u}}\left( f_{\tilde{u}}+g\right) =c_{1}\left( f_{\tilde{u}%
	}+g\right) \varphi _{1}\left( t\right) +c_{2}\left( f_{\tilde{u}}+g\right)
	\varphi _{2}\left( t\right) +c_{3}\left( f_{\tilde{u}}+g\right) ,
	\label{EQ119}
	\end{equation}%
	with%
	\begin{equation}
	c_{j}\left( f_{\tilde{u}}+g\right) =c_{j}\left( f_{\tilde{u}}\right)
	+c_{j}\left( g\right) ,\ j=1,2,3.  \label{EQ120}
	\end{equation}
	In view of (A$_{1}$) and (\ref{EQ116}), we obtain%
	\begin{eqnarray}
	&&\left \vert \tilde{u}\left( t\right) -\left( -\lambda \left(
	J_{a_{+},t}^{\varsigma ,\psi }\right) \left[ \tilde{u}\right] +\left(
	J_{a_{+},t}^{\varrho +\varsigma ,\psi }\right) \left[ f_{\tilde{u}}\right] +\phi
	_{\tilde{u}}\left( f\right) \right) \right \vert \hspace{1in} \notag \\  
	&&=\left \vert \left( J_{a_{+},t}^{\varrho +\varsigma ,\psi }\right) \left[ g%
	\right] +c_{1}\left( g\right) \varphi _{1}\left( t\right) +c_{2}\left(
	g\right) \varphi _{2}\left( t\right) +c_{3}\left( g\right) \right \vert
	\notag \\
	&&\leq \left \vert \left( J_{a_{+},t}^{\varrho +\varsigma ,\psi }\right) \left[
	\epsilon \Phi \right] +c_{1}\left( \epsilon \Phi \right) \varphi _{1}\left(
	t\right) +c_{2}\left( \epsilon \Phi \right) \varphi _{2}\left( t\right)
	+c_{3}\left( \epsilon \Phi \right) \right \vert =C_{\Phi }\left( t\right).
	\label{EQ121}
	\end{eqnarray}
\end{proof}
As an outcome of Lemma \ref{L9} we have the following result:

\begin{corollary}
\label{C1} Assume that $f_{\tilde{u}}$ is a continuous function that
satisfies (A$_{1}$). If $\tilde{u}\in \mathcal{C}$ is a solution of the
inequality
\begin{equation}
\left \vert ^{c}D_{a+,t}^{\varrho ,\psi }\left( ^{c}D_{a+,t}^{\varsigma ,\psi
}+\lambda \right) u(t)-f(t,u(t),^{c}D_{a+,t}^{\delta ,\psi }\left[ u\right]
(t))\right \vert \leq \epsilon ,\ t\in \left[ a,T\right] ,  \label{EQ122}
\end{equation}
then $\tilde{u}$ is a solution of the following integral inequality
\begin{equation}
\left \vert \tilde{u}(t)-\left( -\lambda \left( J_{a_{+},t}^{\varsigma ,\psi
}\right) \left[ \tilde{u}\right] +\left( J_{a_{+},t}^{\varrho +\varsigma ,\psi
}\right) \left[ f_{\tilde{u}}\right] +\phi _{\tilde{u}}\left( f\right)
\right) \right \vert \leq C_{\epsilon },  \label{EQ123}
\end{equation}%
with%
\begin{equation}
\tilde{u}(a)=0,~\tilde{u}(\eta )=0,~\tilde{u}(T)=\mu \left( J_{a+,\xi
}^{\delta ,\psi }\right) \left[ \tilde{u}\right] ,~a<\eta <\xi <T,\ 0<\mu ,
\label{EQ124}
\end{equation}%
where%
\begin{equation}
C_{\epsilon }=\epsilon \varsigma _{13},  \label{EQ125}
\end{equation}%
where $\varsigma _{13}$ is given by (\ref{EQ39}).
\end{corollary}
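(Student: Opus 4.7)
The plan is to recognize that inequality (\ref{EQ122}) is precisely the special case of inequality (\ref{EQ115}) obtained by taking the nonnegative function $\Phi(t) \equiv 1$ on $[a,T]$. Since Lemma \ref{L9} covers the general $\Phi$ case, it applies directly and produces the desired integral inequality once we identify the constants $C_{\Phi}$ with $C_\epsilon$ in the degenerate $\Phi \equiv 1$ setting.

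Carrying this out, I would first invoke Remark \ref{R2} to write $(^cD^{\varrho,\psi}_{a+,t})(^cD^{\varsigma,\psi}_{a+,t}+\lambda)[\tilde u] = f_{\tilde u} + g(t)$ with $|g(t)| \le \epsilon$, and then apply the representation in Lemma \ref{L8} (justified by the same steps that proved Lemma \ref{L7}) to the perturbed equation. This gives
\begin{equation*}
\tilde{u}(t) = -\lambda \left(J^{\varsigma,\psi}_{a_+,t}\right)[\tilde{u}] + \left(J^{\varrho+\varsigma,\psi}_{a_+,t}\right)[f_{\tilde{u}} + g] + \phi_{\tilde{u}}(f_{\tilde{u}} + g),
\end{equation*}
so that transferring the $f_{\tilde u}$ part to the other side yields the error term $\left(J^{\varrho+\varsigma,\psi}_{a_+,t}\right)[g] + c_1(g)\varphi_1(t) + c_2(g)\varphi_2(t) + c_3(g)$, exactly matching $C_\Phi$ from Lemma \ref{L9} with $\Phi \equiv 1$.

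Next, I would estimate the absolute value of this error term by $\epsilon$ times an explicit constant. Using $|g(t)| \le \epsilon$ together with the monotonicity of the $\psi$-fractional integral gives $|(J^{\varrho+\varsigma,\psi}_{a_+,t})[g]| \le \epsilon (J^{\varrho+\varsigma,\psi}_{a_+,t})[1]$. The constants $c_j(g)$ arise (cf.\ (\ref{EQ27})--(\ref{EQ31})) as linear combinations of the nonlocal boundary functionals $(J^{\varrho+\varsigma,\psi}_{a_+,\eta})[g]$, $(J^{\varrho+\varsigma,\psi}_{a_+,T})[g]$, $(J^{\varrho+\varsigma+\delta,\psi}_{a_+,\xi})[g]$ divided by $\Delta$, so combining with the definitions (\ref{EQ34})--(\ref{EQ35}) of $d_{ij}$ and (\ref{EQ43}) of $\rho_{12}$ yields $\sup_t|c_1(g)\varphi_1(t) + c_2(g)\varphi_2(t) + c_3(g)| \le \epsilon \rho_{12}$. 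Taking the supremum over $t \in [a,T]$ of the whole error and invoking the definition (\ref{EQ39}) of $\varsigma_{13}$ gives $C_\epsilon = \epsilon\varsigma_{13}$, as claimed.

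The only non-routine step is the bookkeeping that matches the abstract constants $c_j(\epsilon \Phi)$ from Lemma \ref{L9} against the $d_{ij}$-weighted boundary contributions packaged into $\rho_{12}$; once this matching is explicit the estimate is immediate. The boundary conditions (\ref{EQ124}) are preserved automatically because the representation in Lemma \ref{L8} was derived precisely to satisfy them, so no additional verification is needed.
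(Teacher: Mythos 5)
Your proposal is correct and follows essentially the same route as the paper: specialize Lemma \ref{L9} to $\Phi\equiv 1$, use Remark \ref{R2} and the representation of Lemma \ref{L8} on the perturbed equation, and bound the residual $g$-contributions (the $J^{\varrho+\varsigma,\psi}[g]$ term plus the $d_{ij}$-weighted boundary functionals) by $\epsilon$ times $\sup_t (J_{a_+,t}^{\varrho+\varsigma,\psi})[1]+\rho_{12}=\varsigma_{13}$. Your explicit matching of the $c_j(g)$ against $\rho_{12}$ is just a more detailed rendering of the paper's passage from (\ref{EQ127}) to (\ref{EQ131}).
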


\begin{proof}
	By Remark \ref{R2}-ii, (\ref{EQ118}), and by using (\ref{EQ37}) with the
	conditions (\ref{EQ124}), we have%
	\begin{eqnarray}
	\phi _{\tilde{u}}\left( f+g\right) &=&d_{11}\left( t\right) \left(
	J_{a_{+},\eta }^{\varrho +\varsigma ,\psi }\right) \left[ f_{\tilde{u}}\right]
	+d_{12}\left( t\right) \left( \left( J_{a_{+},T}^{\varrho +\varsigma ,\psi
	}\right) \left[ f_{\tilde{u}}\right] -\mu \left( J_{a_{+},\xi }^{\varrho
	+\varsigma +\delta ,\psi }\right) \left[ f_{\tilde{u}}\right] \right) \notag \\
&&+\lambda d_{21}\left( t\right) \left( J_{a_{+},\eta }^{\varsigma ,\psi
}\right) \left[ \tilde{u}\right] -\lambda d_{22}\left( t\right) \left(
\left( J_{a_{+},T}^{\varsigma ,\psi }\right) \left[ \tilde{u}\right] -\mu \left(
J_{a_{+},\xi }^{\varsigma +\delta ,\psi }\right) \left[ \tilde{u}\right] \right)
\notag \\
&&+d_{11}\left( t\right) \left( J_{a_{+},\eta }^{\varrho +\varsigma ,\psi
}\right) \left[ g\right] +d_{12}\left( t\right) \left( \left(
J_{a_{+},T}^{\varrho +\varsigma ,\psi }\right) \left[ g\right] -\mu \left(
J_{a_{+},\xi }^{\varrho +\varsigma +\delta ,\psi }\right) \left[ g\right] \right).
\label{EQ126}
\end{eqnarray}
The solution of the problem (\ref{EQ118}) is given by%
\begin{eqnarray}
&&\left \vert \tilde{u}\left( t\right) -\left( -\lambda \left(
J_{a_{+},t}^{\varsigma ,\psi }\right) \left[ \tilde{u}\right] +\left(
J_{a_{+},t}^{\varrho +\varsigma ,\psi }\right) \left[ f_{\tilde{u}}\right] +\phi
_{\tilde{u}}\left( f\right) \right) \right \vert  \hspace{1in} \notag \\ 
&&\leq \left \vert \left( J_{a_{+},t}^{\varrho +\varsigma ,\psi }\right) \left[ g%
\right] +d_{11}\left( t\right) \left( J_{a_{+},\eta }^{\varrho +\varsigma ,\psi
}\right) \left[ g\right] +d_{12}\left( t\right) \left( \left(
J_{a_{+},T}^{\varrho +\varsigma ,\psi }\right) \left[ g\right] -\mu \left(
J_{a_{+},\xi }^{\varrho +\varsigma +\delta ,\psi }\right) \left[ g\right] \right)
\right \vert,  \label{EQ127}
\end{eqnarray}%
which implies that%
\begin{equation}
\left \vert \tilde{u}\left( t\right) -\left( -\lambda \left(
J_{a_{+},t}^{\varsigma ,\psi }\right) \left[ \tilde{u}\right] +\left(
J_{a_{+},t}^{\varrho +\varsigma ,\psi }\right) \left[ f_{\tilde{u}}\right] +\phi
_{\tilde{u}}\left( f\right) \right) \right \vert \leq \Phi _{\epsilon }\left(
t\right) ,  \label{EQ128}
\end{equation}%
where%
\begin{equation}
\Phi _{\epsilon }\left( t\right) =\left( J_{a_{+},t}^{\varrho +\varsigma ,\psi
}\right) \left[ \epsilon \right] +c_{1}\left( \epsilon \right) \varphi
_{1}\left( t\right) +c_{2}\left( \epsilon \right) \varphi _{2}\left(
t\right) +c_{3}\left( \epsilon \right)   \label{EQ130}
\end{equation}%
with%
\begin{equation}
C_{\epsilon }\equiv \sup_{t\in \left[ a,T\right] }\left \vert \Phi _{\epsilon
}\left( t\right) \right \vert =\epsilon \varsigma _{13},  \label{EQ131}
\end{equation}%
which is the desired inequality (\ref{EQ123}).
\end{proof}

This corollary is obtained from Lemma \ref{L9} by setting $\Phi \left(
t\right) =1$, for all $\ t\in \left[ a,T\right],$ with (\ref{EQ124}).

\begin{theorem}
\label{T5}Assume that $f_{\tilde{u}}$ is a continuous function that
satisfies (A$_{1}$) and (A$_{4}$). The equation (\ref{EQ1}-a) is
H-U-R stable with respect to $\Phi \ $if there exists a real
number $l_{\Phi }>0$ such that for each $\epsilon >0$ and for each solution $%
\tilde{u}\in \mathcal{C}^{3}\left( \left[ a,T\right] ,
\mathbb{R}
\right) $ of the inequality (\ref{EQ115})$,\ $there exists a solution $%
u^{\ast }\in \mathcal{C}^{3}\left( \left[ a,T\right] ,
\mathbb{R}
\right) $ of (\ref{EQ1}-a) with
\begin{equation}
\left \vert \tilde{u}(t)-u^{\ast }(t)\right \vert \leq \epsilon l_{\Phi
}\Phi \left( t\right) .  \label{EQ132}
\end{equation}
\end{theorem}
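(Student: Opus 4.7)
The plan is to combine Lemma \ref{L9} (which converts the differential inequality \eqref{EQ115} into an integral inequality), the fixed point representation from Lemma \ref{L8}, the Lipschitz hypothesis (A$_1$), the growth condition (A$_4$), and finally the $\psi$-fractional Gronwall inequality of Lemma \ref{L6}. By Theorem \ref{T3} there is a unique solution $u^\ast\in E$ of problem \eqref{EQ1}, so in particular $u^\ast$ satisfies
\begin{equation*}
u^\ast(t)=-\lambda \bigl(J_{a_{+},t}^{\varsigma ,\psi }\bigr)[u^\ast]+\bigl(J_{a_{+},t}^{\varrho +\varsigma ,\psi }\bigr)[f_{u^\ast}]+\phi_{u^\ast}(f).
\end{equation*}
Applying Lemma \ref{L9} to $\tilde u$, the triangle inequality yields the pointwise bound
\begin{equation*}
\bigl|\tilde u(t)-u^\ast(t)\bigr|\le C_{\Phi}(t)+\lambda\bigl(J_{a_{+},t}^{\varsigma ,\psi }\bigr)\bigl[|\tilde u-u^\ast|\bigr]+\bigl(J_{a_{+},t}^{\varrho +\varsigma ,\psi }\bigr)\bigl[|f_{\tilde u}-f_{u^\ast}|\bigr]+\bigl|\phi_{\tilde u}(f)-\phi_{u^\ast}(f)\bigr|.
\end{equation*}

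First I would use (A$_1$) to bound $|f_{\tilde u}-f_{u^\ast}|\le L_1|\tilde u-u^\ast|+L_2\bigl|\bigl(^{c}D_{a+,t}^{\delta ,\psi }\bigr)[\tilde u-u^\ast]\bigr|$, and use the explicit expression of $\phi_u(f)$ from \eqref{EQ37} to estimate the last term by a finite linear combination of $\psi$-fractional integrals of $|\tilde u-u^\ast|$ and $|{}^{c}D^{\delta,\psi}[\tilde u-u^\ast]|$, with coefficients that are bounded functions of $t$ (because $d_{ij}$ are bounded on $[a,T]$). Next, (A$_4$) gives $C_{\Phi}(t)\le\epsilon\,\omega_\Phi\,\Phi(t)$ for some constant $\omega_\Phi$ depending only on $l_\Phi$ and on the constants multiplying $\varphi_1,\varphi_2$; since $\Phi$ is increasing, the inhomogeneity is nondecreasing, which is precisely what Remark \ref{R1} requires.

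To close the loop I will derive an analogous inequality for $\bigl|\bigl(^{c}D_{a+,t}^{\delta ,\psi }\bigr)[\tilde u-u^\ast]\bigr|$ by applying ${}^{c}D^{\delta,\psi}$ to the integral representation and using \eqref{EQ12}; combining the two pointwise estimates and setting $w(t):=|\tilde u(t)-u^\ast(t)|+\bigl|{}^{c}D_{a+,t}^{\delta,\psi}[\tilde u-u^\ast]\bigr|$, I obtain an inequality of the form
\begin{equation*}
w(t)\le \epsilon\,\tilde\omega_\Phi\,\Phi(t)+\sum_{i=1}^{n}g_i(t)\int_a^t\psi'(\tau)\bigl(\psi(t)-\psi(\tau)\bigr)^{\varrho_i-1}w(\tau)\,\mathrm d\tau,
\end{equation*}
with bounded monotone $g_i$ and $\varrho_i\in\{\varsigma,\varsigma-\delta,\varrho+\varsigma,\varrho+\varsigma-\delta\}$. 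Applying Lemma \ref{L6} (in the form of Remark \ref{R1}) yields $w(t)\le\epsilon\,\tilde\omega_\Phi\,\Phi(t)\,M(t)$ where $M(t)$ is a finite sum of Mittag--Leffler factors evaluated on $[a,T]$, hence bounded above by a constant $\kappa$. Setting $l_\Phi:=\tilde\omega_\Phi\,\kappa$ gives the desired bound \eqref{EQ132}, and generalized U-H-R stability follows by taking $\epsilon=1$ in the same argument.

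The main obstacle is the coupled structure: both $|\tilde u-u^\ast|$ and $|{}^{c}D^{\delta,\psi}[\tilde u-u^\ast]|$ appear on the right-hand side, and the $\psi$-Caputo derivative of the boundary correction $\phi_u(f)$ is not literally an integral of $|\tilde u-u^\ast|$ but involves $\bigl({}^{c}D^{\delta,\psi}\bigr)[d_{ij}]$ times integrals over $[a,\eta]$, $[a,T]$, $[a,\xi]$. Handling these boundary-type terms requires bounding them uniformly by a multiple of $\Phi(t)$ (since $\Phi$ is positive and increasing) so that they can be absorbed into the inhomogeneous part rather than treated as a Volterra kernel; this, together with verifying that the resulting $g_i$ are monotone increasing so that Lemma \ref{L6} applies, is where the technical care is needed.
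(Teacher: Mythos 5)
Your overall architecture --- Lemma \ref{L9} to turn the inequality (\ref{EQ115}) into an integral estimate, (A$_{1}$) for the nonlinearity, and the $\psi$-Gronwall inequality of Lemma \ref{L6} together with Remark \ref{R1} to close --- is the same as the paper's, and your argument does deliver (\ref{EQ132}); but you diverge from the paper at the one genuinely technical step, namely the term $\left(J_{a+,t}^{\varrho+\varsigma,\psi}\right)\left[\left\vert {}^{c}D_{a+,t}^{\delta,\psi}\left[\tilde{u}-u^{\ast}\right]\right\vert\right]$ produced by (A$_{1}$). You handle it by differentiating the whole integral representation via (\ref{EQ12}), deriving a second pointwise inequality for $\left\vert {}^{c}D_{a+,t}^{\delta,\psi}\left[\tilde{u}-u^{\ast}\right]\right\vert$, and running Gronwall on the sum $w$; the paper instead invokes the $\psi$-fractional integration-by-parts/composition identity (\ref{EQ142})--(\ref{EQ144}), which rewrites that term as a boundary contribution $z_{0}(t)$ (absorbed into the inhomogeneity $q$) minus $\left(J_{a+,t}^{\varrho+\varsigma-\delta,\psi}\right)\left[\tilde{u}-u^{\ast}\right]$, so that the final inequality (\ref{EQ148}) involves only $p(t)=\left\vert \tilde{u}(t)-u^{\ast}(t)\right\vert$ and Lemma \ref{L6} is applied to a scalar three-kernel inequality. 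Your route costs an extra kernel of order $\varsigma-\delta$ and the need to control ${}^{c}D_{a+,t}^{\delta,\psi}\left[d_{ij}\right]$, but is self-contained and avoids the boundary term $z_{0}$; the paper's route is shorter and is the ``integration by parts'' advertised in the abstract. One caveat applies to both arguments equally: the boundary-correction contributions (the difference $\phi_{\tilde{u}}-\phi_{u^{\ast}}$ in your version, the constants $c_{11},c_{22},c_{33}$ determined by (\ref{EQ139}) in the paper's) are non-local, i.e.\ they contain $\tilde{u}-u^{\ast}$ evaluated over the whole interval, so neither you nor the paper can literally ``absorb them into the inhomogeneous part'' without further justification --- the paper only disposes of them cleanly in Theorem \ref{T6} by imposing (\ref{EQ152}), which forces those constants to vanish. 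So your proposal is a legitimate, slightly heavier variant of the paper's proof, sharing its one real weakness.
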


\begin{proof}
	Using (\ref{EQ115}) and (\ref{EQ1}), we obtain%
	\begin{eqnarray}
	\tilde{u}\left( t\right) &=&-\lambda \left( J_{a_{+},t}^{\varsigma ,\psi
	}\right) \left[ \tilde{u}\right] +\left( J_{a_{+},t}^{\varrho +\varsigma ,\psi
}\right) \left[ f_{\tilde{u}}+g\right] +c_{1}\left( f_{\tilde{u}}+g\right)
\varphi _{1}\left( t\right) +c_{2}\left( f_{\tilde{u}}+g\right) \varphi
_{2}\left( t\right) +c_{3}\left( f_{\tilde{u}}+g\right)  \notag \\
&=&\theta _{\tilde{u}}\left( t,f_{\tilde{u}}+g\right) +c_{1}\left(
f+g\right) \varphi _{1}\left( t\right) +c_{2}\left( f+g\right) \varphi
_{2}\left( t\right) +c_{3}\left( f+g\right)  \label{EQ133}
\end{eqnarray}%
and%
\begin{eqnarray}
u^{\ast }\left( t\right) &=&-\lambda \left( J_{a_{+},t}^{\varsigma ,\psi
}\right) \left[ u^{\ast }\right] +\left( J_{a_{+},t}^{\varrho +\varsigma ,\psi
}\right) \left[ f_{u^{\ast }}\right] +c_{1}^{\prime }\left( f_{u^{\ast
}}\right) \varphi _{1}\left( t\right) +c_{2}^{\prime }\left( f_{u^{\ast
}}\right) \varphi _{2}\left( t\right) +c_{3}^{\prime }\left( f_{u^{\ast
}}\right) \notag \\
&=&\theta _{u^{\ast }}\left( t,f\right) +c_{1}^{\prime }\left( f_{u^{\ast
	}}\right) \varphi _{1}\left( t\right) +c_{1}^{\prime }\left( f_{u^{\ast
}}\right) \varphi _{1}\left( t\right) +c_{2}^{\prime }\left( f_{u^{\ast
}}\right) \varphi _{2}\left( t\right) +c_{3}^{\prime }\left( f_{u^{\ast
}}\right) ,   \label{EQ134}
\end{eqnarray}%
where%
\begin{equation}
\theta _{\tilde{u}}\left( t,f\right) =-\lambda \left( J_{a_{+},t}^{\varsigma
	,\psi }\right) \left[ \tilde{u}\right] +\left( J_{a_{+},t}^{\varrho +\varsigma
	,\psi }\right) \left[ f_{\tilde{u}}\right] .  \label{EQ135}
\end{equation}
By using (\ref{EQ132}) and (\ref{EQ133}), we have the following inequalities%
\begin{eqnarray}
\left \vert \tilde{u}(t)-u^{\ast }(t)\right \vert &\leq &\left \vert \tilde{u%
}\left( t\right) -\left( \theta _{u^{\ast }}\left( t,f\right) +c_{1}^{\prime
}\left( f_{u^{\ast }}\right) \varphi _{1}\left( t\right) +c_{2}^{\prime
}\left( f_{u^{\ast }}\right) \varphi _{2}\left( t\right) +c_{3}^{\prime
}\left( f_{u^{\ast }}\right) \right) \right \vert \medskip  \notag \\
&\leq &\left \vert \tilde{u}\left( t\right) -\left( \theta _{\tilde{u}%
}\left( t,f\right) +c_{1}\left( f_{\tilde{u}}\right) \varphi _{1}\left(
t\right) +c_{1}\left( f_{\tilde{u}}\right) \varphi _{1}\left( t\right)
+c_{2}\left( f_{\tilde{u}}\right) \varphi _{2}\left( t\right) +c_{3}\left(
f_{\tilde{u}}\right) \right) \right \vert \medskip  \notag \\
&&+\left \vert \theta _{\tilde{u}}\left( t,f\right) -\theta _{u^{\ast
	}}\left( t,f\right) \right \vert \medskip +\left \vert \left( c_{1}\left( f_{\tilde{u}}\right) -c_{1}^{\prime}\left( f_{u^{\ast }}\right) \right) \varphi _{1}\left( t\right) \right
	\vert \notag \\
 &&+\left \vert \left( c_{2}\left( f_{\tilde{u}}\right) -c_{2}^{\prime
	}\left( f_{u^{\ast }}\right) \right) \right \vert \varphi _{2}\left(
	t\right) +\left \vert c_{3}\left( f_{\tilde{u}}\right) -c_{3}^{\prime
	}\left( f_{u^{\ast }}\right) \right \vert .  \label{EQ136}
	\end{eqnarray}	
	By setting%
	\begin{equation}
	c_{{\small 33}}=c_{3}\left( f_{\tilde{u}}\right) -c_{3}^{\prime }\left(
	f_{u^{\ast }}\right) =\tilde{u}\left( a\right) -u^{\ast }\left( a\right) ,\
	c_{{\small 11}}=c_{1}\left( f_{\tilde{u}}\right) -c_{1}^{\prime }\left(
	f_{u^{\ast }}\right) \text{ and }c_{{\small 22}}=c_{2}\left( f_{\tilde{u}%
	}\right) -c_{2}^{\prime }\left( f_{u^{\ast }}\right) ,  \label{EQ137}
	\end{equation}%
	and%
	\begin{equation}
	w\left( \eta \right) =\tilde{u}\left( \eta \right) -u^{\ast }\left( \eta
	\right) +\theta _{\tilde{u}}\left( \eta ,f\right) -\theta _{u^{\ast }}\left(
	\eta ,f\right) \text{ and }w\left( T\right) =\tilde{u}\left( T\right)
	-u^{\ast }\left( T\right) +\theta _{\tilde{u}}\left( T,f\right) -\theta
	_{u^{\ast }}\left( T,f\right) .  \label{EQ138}
	\end{equation}	
	It follows from (\ref{EQ132}) and (\ref{EQ133}), that%
	\begin{equation}
	\left(
	\begin{array}{cc}
	\varphi _{1}\left( \eta \right) & \varphi _{2}\left( \eta \right) \\
	\varphi _{1}\left( T\right) & \varphi _{2}\left( T\right)%
	\end{array}%
	\right) \left(
	\begin{array}{c}
	c_{{\small 11}} \\
	c_{{\small 22}}%
	\end{array}%
	\right) =\left(
	\begin{array}{c}
	w\left( \eta \right) \\
	w\left( T\right)%
	\end{array}%
	\right) ,  \label{EQ139}
	\end{equation}
	Applying Lemma \ref{L9} and from estimation (\ref{EQ135}), it follows%
	\begin{equation}
	\left \vert \tilde{u}(t)-u^{\ast }(t)\right \vert \leq C_{\Phi }\left(
	t\right) +\left \vert \theta _{\tilde{u}}\left( t,f\right) -\theta _{u^{\ast
		}}\left( t,f\right) \right \vert +c_{11}\varphi _{1}\left( t\right)
		+c_{22}\varphi _{2}\left( t\right) +c_{33},  \label{EQ140}
		\end{equation}%
		where%
		\begin{eqnarray}
		\left \vert \theta _{\tilde{u}}\left( t,f\right) -\theta _{u^{\ast }}\left(
		t,f\right) \right \vert &=&\left \vert -\lambda \left( J_{a_{+},t}^{\varsigma
			,\psi }\right) \left[ \tilde{u}\right] +\left( J_{a_{+},t}^{\varrho +\varsigma
			,\psi }\right) \left[ f_{\tilde{u}}\right] -\left( -\lambda \left(
		J_{a_{+},t}^{\varsigma ,\psi }\right) \left[ u^{\ast }\right] +\left(
		J_{a_{+},t}^{\varrho +\varsigma ,\psi }\right) \left[ f_{u^{\ast }}\right]
		\right) \right \vert \medskip  \notag \\
		&\leq &\left \vert -\lambda \left( J_{a_{+},t}^{\varsigma ,\psi }\right) \left[
		\tilde{u}-u^{\ast }\right] +\left( J_{a_{+},t}^{\varrho +\varsigma ,\psi }\right) %
		\left[ f_{\tilde{u}}-f_{u^{\ast }}\right] \right \vert .  \label{EQ141}
		\end{eqnarray}
		Using Lemma \ref{L3} and (A$_{1}$), we have%
		\begin{equation}
		\left( J_{a_{+},t}^{\varrho +\varsigma ,\psi }\right) \left[ \left \vert \left(
		^{c}D_{a_{+},t}^{\delta ,\psi }\right) \left[ \tilde{u}-u^{\ast }\right]
		\right \vert \right] =z_{0}\left( t\right) -\left( J_{a_{+},t}^{\varrho
			+\varsigma -\delta ,\psi }\right) \left[ \tilde{u}-u^{\ast }\right]
		\label{EQ142}
		\end{equation}%
		and%
		\begin{equation}
		\left \vert \left( J_{a_{+},t}^{\varrho +\varsigma ,\psi }\right) \left[ f_{%
			\tilde{u}}-f_{u^{\ast }}\right] \right \vert \leq \left \vert L_{1}\left(
		J_{a_{+},t}^{\varrho +\varsigma ,\psi }\right) \left[ \tilde{u}-u^{\ast }\right]
		\right \vert +L_{2}\left \vert z_{0}\left( t\right) -\left(
		J_{a_{+},t}^{\varrho +\varsigma -\delta ,\psi }\right) \left[ \tilde{u}-u^{\ast }%
		\right] \right \vert ,  \label{EQ143}
		\end{equation}%
		where%
		\begin{equation}
		z_{0}\left( t\right) =\frac{\left \vert \tilde{u}\left( a\right) -u^{\ast
			}\left( a\right) \right \vert }{\Gamma \left( \varrho +\varsigma \right) }\times
		\left( J_{a,t-}^{1-\delta ,\psi }\right) \left[ \left( \psi \left( t\right)
		-\psi \left( a\right) \right) ^{\varrho +\varsigma -1}\right] .  \label{EQ144}
		\end{equation}
		Set%
		\begin{equation}
		q\left( t\right) =\mathcal{G}\left( t\right) +L_{2}\frac{\left \vert \tilde{u%
			}\left( a\right) -u^{\ast }\left( a\right) \right \vert }{\Gamma \left(
			\varrho +\varsigma \right) }\times \left( J_{a,t-}^{1-\delta ,\psi }\right) \left[
		\left( \psi \left( t\right) -\psi \left( a\right) \right) ^{\varrho +\varsigma -1}%
		\right] ,  \label{EQ145}
		\end{equation}%
		where%
		\begin{equation}
		\mathcal{G}\left( t\right) =C_{\Phi }\left( t\right) +c_{11}\varphi
		_{1}\left( t\right) +c_{22}\varphi _{2}\left( t\right) +c_{33}.
		\label{EQ146}
		\end{equation}%
		with%
		\begin{equation}
		C_{\Phi }\left( t\right) =\epsilon \left( J_{a_{+},t}^{\varrho +\varsigma ,\psi
		}\right) \left[ \Phi \right] +c_{1}\left( \epsilon \Phi \right) \varphi
		_{1}\left( t\right) +c_{2}\left( \epsilon \Phi \right) \varphi _{2}\left(
		t\right) +c_{3}\left( \epsilon \Phi \right) .  \label{EQ147}
		\end{equation}
		This means that%
		\begin{equation}
		p\left( t\right) \leq q\left( t\right) +\lambda \left( J_{a_{+},t}^{\varsigma
			,\psi }\right) \left[ \tilde{u}-u^{\ast }\right] +L_{1}\left(
		J_{a_{+},t}^{\varrho +\varsigma ,\psi }\right) \left[ \tilde{u}-u^{\ast }\right]
		+L_{2}\left( J_{a_{+},t}^{\varrho +\varsigma -\delta ,\psi }\right) \left[ \tilde{%
			u}-u^{\ast }\right] .  \label{EQ148}
		\end{equation}
		Using Lemma \ref{L6}, the above inequality implies the estimation for $%
		p\left( t\right) $ such as%
		\begin{equation}
		p\left( t\right) \leq q\left( t\right) +\sum_{k=1}^{\infty }\left(
		\begin{array}{l}
		\tfrac{\left( \lambda \Gamma \left( \varsigma \right) \right) ^{k}}{\Gamma
			\left( k\varsigma \right) }\int_{a}^{t}\left[ \psi ^{\prime }\left( \tau \right)
		\left( \psi \left( t\right) -\psi \left( \tau \right) \right) ^{k\varsigma -1}%
		\right] q\left( \tau \right) \mathrm{d}\tau \\
		+ \\
		\tfrac{\left( L_{1}\Gamma \left( \varrho +\varsigma \right) \right) ^{k}}{\Gamma
			\left( k\left( \varrho +\varsigma \right) \right) }\int_{a}^{t}\left[ \psi
		^{\prime }\left( \tau \right) \left( \psi \left( t\right) -\psi \left( \tau
		\right) \right) ^{k\left( \varrho +\varsigma \right) -1}\right] q\left( \tau
		\right) \mathrm{d}\tau \\
		+ \\
		\tfrac{\left( L_{2}\Gamma \left( \varrho +\varsigma -\delta \right) \right) ^{k}}{%
			\Gamma \left( k\left( \varrho +\varsigma -\delta \right) \right) }\int_{a}^{t}%
		\left[ \psi ^{\prime }\left( \tau \right) \left( \psi \left( t\right) -\psi
		\left( \tau \right) \right) ^{k\left( \varrho +\varsigma -\delta \right) -1}%
		\right] q\left( \tau \right) \mathrm{d}\tau%
		\end{array}%
		\right) .  \label{EQ149}
		\end{equation}
		Therefore, with (A$_{4}$), the inequality (\ref{EQ148}) can be rewritten as%
		\begin{equation}
		p\left( t\right) =\left \vert \tilde{u}(t)-u^{\ast }(t)\right \vert \leq
		\epsilon l_{\Phi }\Phi \left( t\right) .  \label{EQ150}
		\end{equation}
		By Remark \ref{R1}, one can obtain%
		\begin{eqnarray*}
		p\left( t\right) &\leq& q\left( t\right) \left[ E_{\varsigma ,\psi }\left( \lambda
		\Gamma \left( \varsigma \right) \left( \psi \left( t\right) \right) ^{\varsigma
		}\right) +E_{\varrho +\varsigma ,\psi }\left( \lambda \Gamma \left( \varrho +\varsigma
		\right) \left( \psi \left( t\right) \right) ^{\varrho +\varsigma }\right) \right.  \\
		&&+\left. E_{\varrho +\varsigma +\delta ,\psi }\left( \lambda \Gamma \left( \varrho +\varsigma
		+\delta \right) \left( \psi \left( t\right) \right) ^{\varrho +\varsigma +\delta
		}\right) \right] .
		\end{eqnarray*}
		Thus, we complete the proof.
	\end{proof}

\begin{theorem}
\label{T6}Assume that the assumptions (A$_{1}$) and (A$_{4}$). If a
continuously differentiable function $\tilde{u}:\left[ a,T\right]
\longrightarrow
\mathbb{R}
$ satisfies (\ref{EQ115}), where $\Phi :\left[ a,T\right] \longrightarrow
\mathbb{R}
^{+}$ is a continuous function with (A$_{3}$), then there exists a unique
continuous function $u^{\ast }:\left[ a,T\right] \longrightarrow
\mathbb{R}
$ of problem (\ref{EQ1}) such that
\begin{equation}
\left \vert \tilde{u}\left( t\right) -u^{\ast }\left( t\right) \right \vert
\leq \epsilon l_{\Phi }\Phi \left( t\right) ,  \label{EQ151}
\end{equation}
with
\begin{equation}
\left \vert \tilde{u}\left( a\right) -u^{\ast }\left( a\right) \right \vert
=\left \vert \tilde{u}\left( \eta \right) -u^{\ast }\left( \eta \right)
\right \vert =\left \vert \tilde{u}\left( T\right) -u^{\ast }\left( T\right)
\right \vert =0.  \label{EQ152}
\end{equation}
\end{theorem}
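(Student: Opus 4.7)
The plan is to follow the scheme of Theorem \ref{T5} very closely, the key simplification being that the extra boundary equalities \eqref{EQ152} kill all the boundary--mismatch constants that appeared there. First, by Remark \ref{R2} applied to \eqref{EQ115} I would extract $g\in\mathcal{C}$ with $|g(t)|\le\epsilon\Phi(t)$ and $(^{c}D^{\varrho,\psi}_{a_+,t})(^{c}D^{\varsigma,\psi}_{a_+,t}+\lambda)[\tilde u]=f_{\tilde u}+g$; Lemma \ref{L7} then yields an integral representation of $\tilde u$ of the form \eqref{EQ133}, while the exact solution $u^{\ast}$ of \eqref{EQ1} admits the analogous representation \eqref{EQ134}. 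Subtracting these and using the shorthand \eqref{EQ137}--\eqref{EQ138} from the proof of Theorem \ref{T5}, the hypothesis \eqref{EQ152} immediately gives $c_{33}=0$ and turns the $2\times 2$ system \eqref{EQ139} for $(c_{11},c_{22})$ into one whose right-hand side involves only $\theta_{\tilde u}-\theta_{u^{\ast}}$ evaluated at $\eta$ and $T$; since the coefficient determinant is (up to known factors) the non-vanishing $\Delta$ of \eqref{EQ23}, $c_{11}$ and $c_{22}$ are controlled by that same mechanism.

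Next I would substitute these back into the perturbed identity and estimate $|\theta_{\tilde u}(t,f)-\theta_{u^{\ast}}(t,f)|$ by means of (A$_1$) and Lemma \ref{L3}, exactly as in \eqref{EQ141}--\eqref{EQ145}. Setting $p(t):=|\tilde u(t)-u^{\ast}(t)|$ and $q(t):=\epsilon (J^{\varrho+\varsigma,\psi}_{a_+,t})[\Phi]$, the resulting pointwise inequality takes the form
\[
p(t)\le q(t)+\lambda (J^{\varsigma,\psi}_{a_+,t})[p]+L_{1}(J^{\varrho+\varsigma,\psi}_{a_+,t})[p]+L_{2}(J^{\varrho+\varsigma-\delta,\psi}_{a_+,t})[p],
\]
to which I would apply the generalized $\psi$-Gronwall inequality (Lemma \ref{L6}) with $n=3$, and Remark \ref{R1} to bound the resulting series by a finite sum of Mittag--Leffler factors $E_{\varsigma,\psi},\,E_{\varrho+\varsigma,\psi},\,E_{\varrho+\varsigma-\delta,\psi}$ evaluated on $\psi(t)$. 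A final invocation of (A$_4$) replaces $(J^{\varrho+\varsigma,\psi}_{a_+,t})[\Phi]$ by $l_{\Phi}\Phi(t)$, and, absorbing the Mittag--Leffler factors (uniformly bounded on the compact interval $[a,T]$) into the constant, delivers \eqref{EQ151}. Uniqueness of the companion $u^{\ast}$ itself is then free from Theorem \ref{T3}, whose hypotheses overlap with those assumed here.

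The main obstacle I anticipate is the careful bookkeeping in the first step: one must verify that under \eqref{EQ152} the $g$-contributions to $c_{1},c_{2},c_{3}$ in \eqref{EQ119}--\eqref{EQ120} are truly controlled by $\epsilon$ times integrals of $\Phi$ alone, with no hidden dependence on $\tilde u - u^{\ast}$, so that they can be safely placed on the $q(t)$ side of the Gronwall inequality rather than remaining entangled with $p$. A secondary technicality is ensuring that the order $\varrho+\varsigma-\delta$ stays strictly positive under the standing ranges for $\varrho,\varsigma,\delta$, so that Lemma \ref{L6} applies to all three kernels simultaneously; everything after that is a direct specialization of the argument already carried out for Theorem \ref{T5}.
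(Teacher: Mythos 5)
Your proposal follows essentially the same route as the paper's proof: both reduce to the argument of Theorem \ref{T5}, use the extra conditions \eqref{EQ152} to annihilate the boundary constants so that $\mathcal{G}(t)=C_{\Phi }(t)$, and then apply the $\psi$-Gronwall inequality (Lemma \ref{L6}) together with Remark \ref{R1} and (A$_{4}$) to arrive at \eqref{EQ151}. The bookkeeping concern you flag about $c_{11}$ and $c_{22}$ is legitimate, but the paper simply asserts that they vanish under \eqref{EQ152}, so on that point your plan is, if anything, more careful than the original.
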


\begin{proof}
		Assume that $\tilde{u}\in \mathcal{C}^{3}\left( \left[ a,T\right] ,%
		\mathbb{R}
		\right) $ is a solution of the (\ref{EQ115}). In view of proof of Theorem %
		\ref{T5}, we get%
		\begin{equation}
		\mathcal{G}\left( t\right) =C_{\Phi }\left( t\right) +c_{11}\varphi
		_{1}\left( t\right) +c_{22}\varphi _{2}\left( t\right) +c_{33}=C_{\Phi
		}\left( t\right)  \label{EQ153}
		\end{equation}%
		with the conditions (\ref{EQ152}), we have
		\begin{equation}
		C_{\Phi }\left( t\right) =\epsilon \left \vert \left( J_{a_{+},t}^{\varrho
			+\varsigma ,\psi }\right) \left[ \Phi \right] +\left( J_{a_{+},\eta }^{\varrho
			+\varsigma ,\psi }\right) \left[ \Phi \right] d_{11}\left( t\right) +\left(
		\left( J_{a_{+},T}^{\varrho +\varsigma ,\psi }\right) \left[ \Phi \right] -\mu
		\left( J_{a_{+},\xi }^{\varrho +\varsigma +\delta ,\psi }\right) \left[ \Phi %
		\right] \right) d_{12}\left( t\right) \right \vert .  \label{EQ154}
		\end{equation}
		Set $q\left( t\right) =C_{\Phi }\left( t\right) .\ $Using Theorem \ref{T5}
		and (A$_{4}$), we conclude that, the estimation for $p\left( t\right) =\left
		\vert u\left( t\right) -\tilde{u}\left( t\right) \right \vert $ such as (\ref%
		{EQ148}).\ So the inequality (\ref{EQ148}) can be rewritten as%
		\begin{equation}
		p\left( t\right) =\left \vert u\left( t\right) -\tilde{u}\left( t\right)
		\right \vert \leq \epsilon l_{\Phi }\Phi \left( t\right) .  \label{EQ156}
		\end{equation}
		By Remark \ref{R1}, one can obtain%
		\begin{eqnarray*}
		p\left( t\right) &\leq& q\left( t\right) \left[ E_{\varsigma ,\psi }\left( \lambda
		\Gamma \left( \varsigma \right) \left( \psi \left( t\right) \right) ^{\varsigma
		}\right) +E_{\varrho +\varsigma ,\psi }\left( \lambda \Gamma \left( \varrho +\varsigma
		\right) \left( \psi \left( t\right) \right) ^{\varrho +\varsigma }\right) \right. \\
		&&+ \left. E_{\varrho +\varsigma +\delta ,\psi }\left( \lambda \Gamma \left( \varrho +\varsigma
		+\delta \right) \left( \psi \left( t\right) \right) ^{\varrho +\varsigma +\delta
		}\right) \right].
		\end{eqnarray*}
		This proves that the problem (\ref{EQ1}) is H-U-R stable.
	\end{proof}

\begin{theorem}
\label{T7}Assume that the assumptions (A$_{2}$), (A$_{4}$) and (\ref{EQ115})
hold. Then the equation (\ref{EQ1}-a) is H-U-R stable.
\end{theorem}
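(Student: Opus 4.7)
The strategy mirrors the proofs of Theorems \ref{T5} and \ref{T6}, but replaces the Lipschitz control of $|f_{\tilde u}-f_{u^*}|$ supplied by (A$_1$) with the pointwise majorisation $|f(t,u,v)|\le\chi(t)$ coming from (A$_2$). First I would invoke Remark \ref{R2} to produce $g\in\mathcal{C}[a,T]$ with $|g(t)|\le\epsilon\Phi(t)$ and rewrite (\ref{EQ115}) as the perturbed equation (\ref{EQ118}); Lemma \ref{L9} then yields the a priori bound $|\tilde u(t)-(-\lambda J^{\varsigma,\psi}_{a_+,t}[\tilde u]+J^{\varrho+\varsigma,\psi}_{a_+,t}[f_{\tilde u}]+\phi_{\tilde u}(f))|\le C_\Phi(t)$ with $C_\Phi$ as in (\ref{EQ117}).

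Next, let $u^*\in E$ be an exact solution of (\ref{EQ1}), whose existence is granted by Theorem \ref{T4}. Subtracting the integral representation of $u^*$ from that of $\tilde u$ and applying the triangle inequality gives
\begin{equation*}
p(t):=|\tilde u(t)-u^*(t)|\le C_\Phi(t)+\lambda\bigl|J^{\varsigma,\psi}_{a_+,t}[\tilde u-u^*]\bigr|+\bigl|J^{\varrho+\varsigma,\psi}_{a_+,t}[f_{\tilde u}-f_{u^*}]\bigr|+|\phi_{\tilde u}(f)-\phi_{u^*}(f)|.
\end{equation*}
Since (A$_2$) forces $|f_{\tilde u}-f_{u^*}|\le 2\chi(t)$, and since the boundary contributions buried in $\phi$ decompose in the same manner (fractional integrals of $\tilde u-u^*$ plus fractional integrals of $f_{\tilde u}-f_{u^*}$), the nonlinear $f$-terms aggregate into a known, continuous, nondecreasing function of $t$. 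The display thus collapses to $p(t)\le q(t)+\lambda\,\widetilde C\,J^{\varsigma,\psi}_{a_+,t}[p]$, where $q(t)$ bundles $C_\Phi(t)$ together with the $\chi$-fractional integrals.

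The third step is a single application of the $\psi$-fractional Gronwall inequality (Lemma \ref{L6}, case $n=1$); via Remark \ref{R1} this produces $p(t)\le q(t)\,E_{\varsigma,\psi}\!\bigl(\lambda\widetilde C\,\Gamma(\varsigma)(\psi(t)-\psi(a))^\varsigma\bigr)$. Finally, (A$_4$) rewrites each summand of the form $J^{\varrho+\varsigma,\psi}_{a_+,t}[\Phi]$ appearing in $q$ as a multiple of $\Phi(t)$, while the $\chi$-summands are dominated using the continuity of $\chi$ on $[a,T]$. Assembling the constants into a single $c_f>0$ yields $|\tilde u(t)-u^*(t)|\le \epsilon\,c_f\,\Phi(t)$, which is exactly H-U-R stability.

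The principal obstacle is the interplay between the two majorants $\chi$ in (A$_2$) and $\Phi$ in (A$_4$): the boundedness hypothesis controls the nonlinearity in terms of $\chi$, whereas the H-U-R target is expressed in terms of $\Phi$. One must either argue that $\chi$ is subordinate to $\Phi$ on the compact interval $[a,T]$ (which follows from continuity of both functions together with the strict positivity of the increasing $\Phi$), or first derive an intermediate bound in terms of $\chi$ and then transfer to $\Phi$ through (A$_4$). Once this transfer is justified, the remainder of the argument is a mechanical reproduction of the Gronwall–Mittag-Leffler machinery already exhibited in Theorem \ref{T5}.
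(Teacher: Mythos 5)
Your route is essentially the one the paper follows: Remark \ref{R2} and Lemma \ref{L9} give the a priori bound, the triangle inequality splits off $\theta_{\tilde u}-\theta_{u^{\ast}}$, (A$_2$) is used to absorb the difference of the $f$-terms into the forcing function $q$, and the $\psi$-Gronwall inequality is applied with the single kernel $\lambda J^{\varsigma,\psi}_{a_+,t}$ followed by Remark \ref{R1}. Your estimate $\vert f_{\tilde u}-f_{u^{\ast}}\vert\le 2\chi$ is in fact cleaner than the paper's (\ref{EQ157}), which asserts the questionable bound $\vert J^{\varrho+\varsigma,\psi}_{a_+,t}[\tilde\chi]-J^{\varrho+\varsigma,\psi}_{a_+,t}[\chi]\vert$.

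However, the obstacle you flag at the end is a genuine gap, and neither of your two proposed fixes closes it. The contribution of the nonlinearity to $q(t)$ is of size $J^{\varrho+\varsigma,\psi}_{a_+,t}[2\chi]$, and this quantity carries no factor of $\epsilon$: it comes from the boundedness hypothesis (A$_2$), not from the perturbation $g$, whose size $\epsilon\Phi$ enters only through $C_\Phi(t)$. Even granting $\chi\le C\,\Phi$ on $[a,T]$ and invoking (A$_4$), you obtain $q(t)\le(\epsilon c_1+c_2)\Phi(t)$ with $c_2>0$ independent of $\epsilon$, and hence, after Gronwall, $p(t)\le(\epsilon c_1+c_2)\,c'\,\Phi(t)$; this is not the U--H--R estimate $p(t)\le\epsilon c_f\Phi(t)$, whose right-hand side must vanish as $\epsilon\to 0$. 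The paper's own proof has exactly the same defect --- it stops at $p(t)\le q(t)E_{\varsigma,\psi}(\lambda\Gamma(\varsigma)(\psi(t))^{\varsigma})$ without ever producing the factor $\epsilon$ --- so you have reproduced its argument faithfully, gap included. To genuinely obtain U--H--R stability one needs Lipschitz-type control of $f_{\tilde u}-f_{u^{\ast}}$ as in Theorem \ref{T5} under (A$_1$), not merely a common majorant for the two nonlinear terms.
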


\begin{proof}
		By (A$_{2}$) and (\ref{EQ135}), we have%
		\begin{equation}
		\left \vert \left( J_{a_{+},t}^{\varrho +\varsigma ,\psi }\right) \left[ f_{%
			\tilde{u}}\right] -\left( J_{a_{+},t}^{\varrho +\varsigma ,\psi }\right) \left[
		f_{u}\right] \right \vert \leq \left \vert \left( J_{a_{+},t}^{\varrho +\varsigma
			,\psi }\right) \left[ \tilde{\chi}\right] -\left( J_{a_{+},t}^{\varrho +\varsigma
			,\psi }\right) \left[ \chi \right] \right \vert  \label{EQ157}
		\end{equation}%
		and%
		\begin{equation}
		\left \vert \tilde{u}(t)-u^{\ast }(t)\right \vert \leq C_{\Phi }\left(
		t\right) +\left \vert \theta _{\tilde{u}}\left( t,f+g\right) -\theta
		_{u^{\ast }}\left( t,f\right) \right \vert +c_{11}\left \vert \varphi
		_{1}\left( t\right) \right \vert +c_{22}\left \vert \varphi _{2}\left(
		t\right) \right \vert +c_{33},  \label{EQ158}
		\end{equation}%
		where%
		\begin{eqnarray}
		&&\left \vert \theta _{\tilde{u}}\left( t,f+g\right) -\theta _{u^{\ast
			}}\left( t,f\right) \right \vert \notag \\
        &&=\left \vert -\lambda \left(
			J_{a_{+},t}^{\varsigma ,\psi }\right) \left[ \tilde{u}\right] +\left(
			J_{a_{+},t}^{\varrho +\varsigma ,\psi }\right) \left[ f_{\tilde{u}}+g\right]
			-\left( -\lambda \left( J_{a_{+},t}^{\varsigma ,\psi }\right) \left[ u^{\ast }%
			\right] +\left( J_{a_{+},t}^{\varrho +\varsigma ,\psi }\right) \left[ f_{u^{\ast
				}}\right] \right) \right \vert \medskip  \notag \\
				&&\leq \left \vert -\lambda \left( J_{a_{+},t}^{\varsigma ,\psi }\right) \left[
				\tilde{u}-u^{\ast }\right] +\left( J_{a_{+},t}^{\varrho +\varsigma ,\psi }\right) %
				\left[ f_{\tilde{u}}-f_{u^{\ast }}\right] \right \vert +\left \vert \left(
				J_{a_{+},t}^{\varrho +\varsigma ,\psi }\right) \left[ g\right] \right \vert .
				\label{EQ159}
				\end{eqnarray}
				Using Lemma \ref{L9}, we have%
				\begin{equation}
				p\left( t\right) \leq q\left( t\right) +\lambda \left( J_{a_{+},t}^{\varsigma
					,\psi }\right) \left[ \tilde{u}-u^{\ast }\right] ,  \label{EQ160}
				\end{equation}%
				where%
				\begin{equation}
				q\left( t\right) =\mathcal{G}\left( t\right) +\left \vert \left(
				J_{a_{+},t}^{\varrho +\varsigma ,\psi }\right) \left[ \tilde{\chi}\right] -\left(
				J_{a_{+},t}^{\varrho +\varsigma ,\psi }\right) \left[ \chi \right] \right \vert ,
				\label{EQ161}
				\end{equation}%
				with%
				\begin{equation}
				\mathcal{G}\left( t\right) =C_{\Phi }\left( t\right) +c_{11}\varphi
				_{1}\left( t\right) +c_{22}\varphi _{2}\left( t\right) +c_{33}.
				\label{EQ162}
				\end{equation}
				From the above, it follows%
				\begin{equation}
				p\left( t\right) \leq q\left( t\right) +\sum_{k=1}^{\infty }%
				\begin{array}{l}
				\dfrac{\left( \lambda \Gamma \left( \varsigma \right) \right) ^{k}}{\Gamma
					\left( k\varsigma \right) }\int_{a}^{t}\left[ \psi ^{\prime }\left( \tau \right)
				\left( \psi \left( t\right) -\psi \left( \tau \right) \right) ^{k\varsigma -1}%
				\right] q\left( \tau \right) \mathrm{d}\tau%
				\end{array}%
				.  \label{EQ163}
				\end{equation}
				By Remark \ref{R1}, one can obtain%
				\begin{equation*}
				p\left( t\right) \leq q\left( t\right) E_{\varsigma ,\psi }\left( \lambda \Gamma
				\left( \varsigma \right) \left( \psi \left( t\right) \right) ^{\varsigma }\right) .
				\end{equation*}
			\end{proof}

\begin{remark}
\label{R3}If $\Phi \left( t\right) $ is a constant function in the
inequalities (\ref{EQ115}), the we say that (\ref{EQ1}-a) is H-U
stable.
\end{remark}

\begin{corollary}
\label{C2}Assume that the assumptions (A$_{2}$), (A$_{4}$) and (\ref{EQ115})
hold. Then the equation (\ref{EQ1}-a) with (\ref{EQ151}) is
H-U-R stable.
\end{corollary}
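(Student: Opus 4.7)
The plan is to treat Corollary \ref{C2} as the specialization of Theorem \ref{T7} that makes the stability constant explicit and shows it has the precise form required by Definition \ref{D3}. My starting point would be Remark \ref{R2}: if $\tilde u$ satisfies inequality (\ref{EQ115}), then there exists $g \in \mathcal C$ with $|g(t)| \le \epsilon \Phi(t)$ such that $\tilde u$ solves the perturbed Langevin equation $\left(^cD_{a+,t}^{\varrho,\psi}\right)\left(^cD_{a+,t}^{\varsigma,\psi}+\lambda\right)[\tilde u] = f_{\tilde u}(t) + g(t)$. Converting this to integral form via Lemma \ref{L7} gives a fixed-point representation for $\tilde u$ analogous to (\ref{EQ36}) but with an extra $g$-term. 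Letting $u^\ast$ denote the (unique, by Theorem \ref{T3}) solution of (\ref{EQ1}), subtraction yields a pointwise bound for $p(t) = |\tilde u(t) - u^\ast(t)|$ of the form already derived in (\ref{EQ158})--(\ref{EQ161}) within the proof of Theorem \ref{T7}.

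Next I would apply the machinery established in the proof of Theorem \ref{T7}. Using (A$_2$) to dominate $|J_{a_+,t}^{\varrho+\varsigma,\psi}[f_{\tilde u} - f_{u^\ast}]|$ by $|J_{a_+,t}^{\varrho+\varsigma,\psi}[\tilde\chi] - J_{a_+,t}^{\varrho+\varsigma,\psi}[\chi]|$, and invoking Lemma \ref{L9} to control the constants $c_{11}, c_{22}, c_{33}$ that arise from the boundary conditions, one arrives at the integral inequality (\ref{EQ160})
\begin{equation*}
p(t) \le q(t) + \lambda \left(J_{a_+,t}^{\varsigma,\psi}\right)\!\left[\tilde u - u^\ast\right],
\end{equation*}
with $q(t) = \mathcal G(t) + |J_{a_+,t}^{\varrho+\varsigma,\psi}[\tilde\chi] - J_{a_+,t}^{\varrho+\varsigma,\psi}[\chi]|$. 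The single-kernel $\psi$-fractional Gronwall inequality (Lemma \ref{L6} with $n=1$, and Remark \ref{R1}) then upgrades this to
\begin{equation*}
p(t) \le q(t)\, E_{\varsigma,\psi}\!\left(\lambda \Gamma(\varsigma)(\psi(t))^\varsigma\right).
\end{equation*}

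The final step, which produces the H-U-R conclusion (\ref{EQ151}), is to replace the abstract majorant $q(t)$ by a concrete multiple of $\epsilon \Phi(t)$. Here assumption (A$_4$) is decisive: applying $\left(J_{a_+,t}^{\varrho,\psi}\right)$ (and its iterates at the boundary points $\eta, T, \xi$) to the perturbation $g$ whose modulus is bounded by $\epsilon\Phi$ gives each of the pieces $C_\Phi(t)$, $\varphi_1(t)c_{11}$, $\varphi_2(t)c_{22}$, $c_{33}$ an estimate of the form $\le \epsilon\, (\text{const})\, l_\Phi \Phi(t)$. Since $\Phi$ is increasing and continuous on $[a,T]$, the Mittag--Leffler factor is bounded on the compact interval, so absorbing all constants into a single $l_\Phi > 0$ yields $p(t) \le \epsilon l_\Phi \Phi(t)$ for every $t \in [a,T]$, which is precisely (\ref{EQ151}).

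I expect the main technical obstacle to be bookkeeping: carefully tracking the boundary constants $c_1(\epsilon\Phi), c_2(\epsilon\Phi), c_3(\epsilon\Phi)$ produced by Lemma \ref{L9}, the $d_{ij}(t)$ terms from the nonlocal conditions, and the Mittag--Leffler multiplier, and showing that the resulting expression factors as $\epsilon l_\Phi \Phi(t)$ rather than something depending separately on $t$ through $\psi(t)$. This is where assumption (A$_4$) is used essentially — without the inequality $(J_{a+,t}^{\varrho,\psi})[\Phi] \le l_\Phi \Phi(t)$, one would obtain only a bound by $\epsilon$ times an auxiliary function that need not be a constant multiple of $\Phi$, and the H-U-R form of Definition \ref{D3} would fail to hold.
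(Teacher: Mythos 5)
Your proposal follows essentially the same route as the paper's own proof: it reduces the corollary to the inequality $p(t)\leq q(t)+\lambda\left(J_{a_{+},t}^{\varsigma,\psi}\right)\left[\tilde{u}-u^{\ast}\right]$ obtained in Theorem \ref{T7}, applies the single-kernel $\psi$-Gronwall inequality (Lemma \ref{L6} with $n=1$ and Remark \ref{R1}) to get the Mittag--Leffler bound, and then invokes (A$_{4}$) to put the majorant in the form $\epsilon l_{\Phi}\Phi(t)$. If anything, you are more explicit than the paper about the last step (absorbing the bounded Mittag--Leffler factor and the boundary constants into $l_{\Phi}$), which the paper leaves implicit after stating $p(t)\leq q(t)E_{\varsigma,\psi}\left(\lambda\Gamma(\varsigma)(\psi(t))^{\varsigma}\right)$.
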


\begin{proof}
				Using Theorem \ref{T7}, we have%
				\begin{equation}
				p\left( t\right) \leq q\left( t\right) +\lambda \left( J_{a_{+},t}^{\varsigma
					,\psi }\right) \left[ \tilde{u}-u^{\ast }\right] ,  \label{EQ164}
				\end{equation}%
				where%
				\begin{equation}
				p\left( t\right) =\left \vert \tilde{u}\left( t\right) -u\left( t\right)
				\right \vert \text{ and}\ q\left( t\right) =C_{\Phi }\left( t\right) +\left
				\vert \left( J_{a_{+},t}^{\varrho +\varsigma ,\psi }\right) \left[ \tilde{\chi}%
				\right] -\left( J_{a_{+},t}^{\varrho +\varsigma ,\psi }\right) \left[ \chi \right]
				\right \vert .  \label{EQ165}
				\end{equation}
				We conclude that%
				\begin{equation}
				p\left( t\right) \leq q\left( t\right) +\sum_{k=1}^{\infty }%
				\begin{array}{l}
				\dfrac{\left( \lambda \Gamma \left( \varsigma \right) \right) ^{k}}{\Gamma
					\left( k\varsigma \right) }\int_{a}^{t}\left[ \psi ^{\prime }\left( \tau \right)
				\left( \psi \left( t\right) -\psi \left( \tau \right) \right) ^{k\varsigma -1}%
				\right] q\left( \tau \right) \mathrm{d}\tau%
				\end{array}%
				.  \label{EQ166}
				\end{equation}
				By Remark \ref{R1}, one can obtain%
				\begin{equation*}
				p\left( t\right) \leq q\left( t\right) E_{\varsigma ,\psi }\left( \lambda \Gamma
				\left( \varsigma \right) \left( \psi \left( t\right) \right) ^{\varsigma }\right) .
				\end{equation*}
			\end{proof}

\begin{theorem}
\label{T8}Assume that the assumptions (A$_{1}$) and (\ref{EQ115}) with (\ref%
{EQ152}) hold. Then the problem (\ref{EQ1}) is U-H stable and
consequently generalized U-H stable.
\end{theorem}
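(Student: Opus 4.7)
The plan is to follow essentially the same scheme as in Theorem \ref{T6}, but specialized to the constant case $\Phi(t)\equiv 1$, and then to extract a $t$-independent bound so as to obtain genuine U--H (rather than merely U--H--R) stability. First I would let $\tilde u\in\mathcal{C}[a,T]$ satisfy the inequality (\ref{EQ115}) with $\Phi(t)\equiv 1$; by Remark \ref{R2} there exists $g\in\mathcal{C}$ with $|g(t)|\leq\epsilon$ such that $\tilde u$ solves the perturbed equation, and therefore satisfies the corresponding integral equation obtained from Lemma \ref{L8} with $f_{\tilde u}$ replaced by $f_{\tilde u}+g$. Let $u^{\ast}$ denote the unique solution of (\ref{EQ1}) guaranteed by Theorem \ref{T3}; then $u^{\ast}$ satisfies (\ref{EQ36}).

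Next I would form the difference $p(t)=|\tilde u(t)-u^{\ast}(t)|$ exactly as in the decomposition (\ref{EQ136})--(\ref{EQ141}) of the proof of Theorem \ref{T5}. Using the boundary matching conditions (\ref{EQ152}), the constants $c_{11},c_{22},c_{33}$ defined in (\ref{EQ137}) all vanish: since the linear system (\ref{EQ139}) has nonsingular coefficient matrix (this is essentially the condition $\Delta\neq 0$ from (\ref{EQ23})) and right-hand side $w(\eta)=w(T)=0$, we get $c_{11}=c_{22}=0$, while $c_{33}=\tilde u(a)-u^{\ast}(a)=0$. Hence, mimicking (\ref{EQ153})--(\ref{EQ154}) but with $\Phi\equiv 1$, the forcing term $\mathcal{G}(t)$ reduces to
\begin{equation*}
\mathcal{G}(t)=C_{1}(t)=\epsilon\bigl|\bigl(J_{a_{+},t}^{\varrho+\varsigma,\psi}\bigr)[1]+\bigl(J_{a_{+},\eta}^{\varrho+\varsigma,\psi}\bigr)[1]\,d_{11}(t)+\bigl(\bigl(J_{a_{+},T}^{\varrho+\varsigma,\psi}\bigr)[1]-\mu\bigl(J_{a_{+},\xi}^{\varrho+\varsigma+\delta,\psi}\bigr)[1]\bigr)d_{12}(t)\bigr|,
\end{equation*}
which is bounded on $[a,T]$ by a $t$--independent constant; denote $M_{\epsilon}:=\sup_{t\in[a,T]}C_{1}(t)$, so $M_{\epsilon}\leq\epsilon\,\varsigma_{13}$ with $\varsigma_{13}$ from (\ref{EQ39}).

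Combining this with the Lipschitz estimate from (A$_{1}$) as in (\ref{EQ141})--(\ref{EQ148}), I obtain the $\psi$--fractional integral inequality
\begin{equation*}
p(t)\leq M_{\epsilon}+\lambda\bigl(J_{a_{+},t}^{\varsigma,\psi}\bigr)[p]+L_{1}\bigl(J_{a_{+},t}^{\varrho+\varsigma,\psi}\bigr)[p]+L_{2}\bigl(J_{a_{+},t}^{\varrho+\varsigma-\delta,\psi}\bigr)[p].
\end{equation*}
Now I apply the generalized $\psi$--Gronwall inequality (Lemma \ref{L6}) with $v(t)\equiv M_{\epsilon}$ and the three kernels of orders $\varsigma$, $\varrho+\varsigma$, $\varrho+\varsigma-\delta$; since $v$ is a nondecreasing nonnegative constant, Remark \ref{R1} (extended to three terms) yields
\begin{equation*}
p(t)\leq M_{\epsilon}\Bigl[E_{\varsigma,\psi}\bigl(\lambda\Gamma(\varsigma)(\psi(t)-\psi(a))^{\varsigma}\bigr)+E_{\varrho+\varsigma,\psi}\bigl(L_{1}\Gamma(\varrho+\varsigma)(\psi(t)-\psi(a))^{\varrho+\varsigma}\bigr)+E_{\varrho+\varsigma-\delta,\psi}\bigl(L_{2}\Gamma(\varrho+\varsigma-\delta)(\psi(t)-\psi(a))^{\varrho+\varsigma-\delta}\bigr)\Bigr].
\end{equation*}
Evaluating the Mittag--Leffler terms at $t=T$ gives a finite constant $K_{f}>0$, independent of $\epsilon$, and setting $c_{f}:=\varsigma_{13}K_{f}$ produces the desired uniform estimate $|\tilde u(t)-u^{\ast}(t)|\leq\epsilon c_{f}$ for all $t\in[a,T]$, which is exactly Definition \ref{D1}; generalized U--H stability then follows immediately by taking $\Phi(\epsilon):=c_{f}\epsilon\in C(\mathbb R^{+},\mathbb R^{+})$ with $\Phi(0)=0$, as in Definition \ref{D2}.

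The main obstacle is the bookkeeping at the boundary conditions: verifying that (\ref{EQ152}) really does force $c_{11}=c_{22}=c_{33}=0$ via the invertibility of the matrix in (\ref{EQ139}) (which relies on $\varphi_{1}(\eta)\varphi_{2}(T)-\varphi_{1}(T)\varphi_{2}(\eta)\neq 0$, analogous to $\Delta\neq 0$), and thereby ensuring that the residual forcing $\mathcal{G}$ reduces to a bound proportional to $\epsilon$ alone, with no hidden dependence on $\tilde u$ or $u^{\ast}$. Once this is in place the three--kernel Gronwall application is routine.
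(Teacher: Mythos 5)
Your route is genuinely different from the paper's: for Theorem \ref{T8} the paper does not use the $\psi$--Gronwall lemma at all. It writes $u^{\ast}=\Psi u^{\ast}$, invokes Corollary \ref{C1} to control $\tilde u-\Psi\tilde u$ by $\epsilon\varsigma_{13}$, estimates $\sup_{t}\left\vert(\Psi\tilde u)(t)-(\Psi u^{\ast})(t)\right\vert\leq(\varsigma_{11}+\varsigma_{12}\kappa_{0})\left\Vert\tilde u-u^{\ast}\right\Vert_{\infty}+\epsilon\varsigma_{13}$ using (A$_{1}$) and Lemma \ref{L2} (with $\kappa_{0}$ as in (\ref{EQ173})), and then solves the resulting scalar inequality under the smallness condition $\varsigma_{11}+\varsigma_{12}\kappa_{0}<1$ to reach (\ref{EQ174}); no Mittag--Leffler bound appears.

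The genuine gap in your argument sits exactly at the point you flag as ``the main obstacle'': (\ref{EQ152}) does \emph{not} force $c_{11}=c_{22}=0$. By (\ref{EQ138}), the right-hand side of the system (\ref{EQ139}) is built from $w(\eta)$ and $w(T)$, which contain $\theta_{\tilde u}(\eta,f)-\theta_{u^{\ast}}(\eta,f)$ and $\theta_{\tilde u}(T,f)-\theta_{u^{\ast}}(T,f)$, i.e.\ the nonlocal quantities $-\lambda\left(J^{\varsigma,\psi}_{a_{+},\eta}\right)\left[\tilde u-u^{\ast}\right]+\left(J^{\varrho+\varsigma,\psi}_{a_{+},\eta}\right)\left[f_{\tilde u}-f_{u^{\ast}}\right]$ (and likewise at $T$, plus the $g$--contribution). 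Under (\ref{EQ152}) only the pointwise differences $\tilde u(\eta)-u^{\ast}(\eta)$, $\tilde u(T)-u^{\ast}(T)$, $\tilde u(a)-u^{\ast}(a)$ vanish, so $w(\eta)$ and $w(T)$ do not, and $\left\vert c_{11}\right\vert,\left\vert c_{22}\right\vert$ are bounded only by a constant times $\epsilon+\left\Vert\tilde u-u^{\ast}\right\Vert_{\infty}$, not by $\epsilon$ alone. Hence your forcing term $\mathcal{G}(t)$ retains a hidden dependence on $\sup_{t}p(t)$ through integrals evaluated at the fixed points $\eta$, $T$, $\xi$; these are not of the convolution form $\int_{a}^{t}\psi'(\tau)\left(\psi(t)-\psi(\tau)\right)^{\varrho_{i}-1}p(\tau)\,\mathrm{d}\tau$ required by Lemma \ref{L6}, so the Gronwall lemma cannot absorb them. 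To close the argument you must first absorb these nonlocal contributions via a smallness condition of exactly the type the paper imposes ($\varsigma_{11}+\varsigma_{12}\kappa_{0}<1$) --- and once that condition is in force, the direct estimate already gives $\left\Vert\tilde u-u^{\ast}\right\Vert_{\infty}\leq\epsilon\varsigma_{13}/(1-\varsigma_{11}-\varsigma_{12}\kappa_{0})$ and the three-kernel Gronwall step becomes superfluous.
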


\begin{proof}
				Let $u^{\ast }$ be a unique solution of the fractional Langevin type problem
				(\ref{EQ1}), that is, $u^{\ast }(t)=\left( \Psi u^{\ast }\right) (t)$.
				Assume that $\tilde{u}\in \mathcal{C}\left( \left[ a,T\right] ,%
				\mathbb{R}
				\right) $ is a solution of the (\ref{EQ115})
				By using the estimation
				\begin{equation}
				\left \vert \left( \Psi \tilde{u}\right) \left( t\right) -\left( \Psi
				u^{\ast }\right) \left( t\right) \right \vert \leq \lambda \left \vert
				\left( J_{a_{+},t}^{\varsigma ,\psi }\right) \left[ \tilde{u}-u^{\ast }\right]
				\right \vert +\left \vert \left( J_{a_{+},t}^{\varrho +\varsigma ,\psi }\right) %
				\left[ f_{u^{\ast }}-\left( f_{\tilde{u}}+g\right) \right] \right \vert
				+\left \vert \phi _{\tilde{u}}\left( f+g\right) -\phi _{u^{\ast }}\left(
				t,f\right) \right \vert ,  \label{EQ168}
				\end{equation}%
				where%
				\begin{eqnarray}
				&&\left \vert \phi _{\tilde{u}}\left( f+g\right) -\phi _{u^{\ast }}\left(
				t,f\right) \right \vert \notag \\
&&=d_{11}\left( t\right) \left( J_{a_{+},\eta
				}^{\varrho +\varsigma ,\psi }\right) \left[ f_{\tilde{u}}-f_{u^{\ast }}\right]
				+d_{12}\left( t\right) \left( \left( J_{a_{+},T}^{\varrho +\varsigma ,\psi
				}\right) \left[ f_{\tilde{u}}-f_{u^{\ast }}\right] -\mu \left( J_{a_{+},\xi
			}^{\varrho +\varsigma +\delta ,\psi }\right) \left[ f_{\tilde{u}}-f_{u^{\ast }}%
			\right] \right) \medskip  \notag \\
			&&+\lambda d_{21}\left( t\right) \left( J_{a_{+},\eta }^{\varsigma ,\psi
			}\right) \left[ \tilde{u}-u^{\ast }\right] -\lambda d_{22}\left( t\right)
			\left( \left( J_{a_{+},T}^{\varsigma ,\psi }\right) \left[ \tilde{u}-u^{\ast }%
			\right] -\mu \left( J_{a_{+},\xi }^{\varsigma +\delta ,\psi }\right) \left[
			\tilde{u}-u^{\ast }\right] \right) \medskip  \notag \\
			&&+d_{11}\left( t\right) \left( J_{a_{+},\eta }^{\varrho +\varsigma ,\psi
			}\right) \left[ g\right] +d_{12}\left( t\right) \left( \left(
			J_{a_{+},T}^{\varrho +\varsigma ,\psi }\right) \left[ g\right] -\mu \left(
			J_{a_{+},\xi }^{\varrho +\varsigma +\delta ,\psi }\right) \left[ g\right] \right).
			 \label{EQ169}
			\end{eqnarray}
			Taking the maximum over $\left[ a,T\right] ,$ we get%
			\begin{equation}
			\sup_{t\in \left[ a,T\right] }\left \vert \left( \Psi \tilde{u}\right)
			\left( t\right) -\left( \Psi u^{\ast }\right) \left( t\right) \right \vert
			\leq \varsigma _{11}\sup_{t\in \left[ a,T\right] }\left \vert \tilde{u}%
			\left( t\right) -u^{\ast }\left( t\right) \right \vert +\varsigma
			_{12}\sup_{t\in \left[ a,T\right] }\left \vert \left( ^{c}D_{a+,t}^{\delta
				,\psi }\right) \left[ \tilde{u}-u^{\ast }\right] \right \vert +\epsilon
			\varsigma _{13},  \label{EQ170}
			\end{equation}
			Using Lemma \ref{L2}\ and (\ref{EQ169}), we obtain%
			\begin{equation}
			\sup_{t\in \left[ a,T\right] }\left \vert \left( \Psi \tilde{u}\right)
			\left( t\right) -\left( \Psi u^{\ast }\right) \left( t\right) \right \vert
			\leq \left( \varsigma _{11}+\varsigma _{12}\kappa _{0}\right) \sup_{t\in %
				\left[ a,T\right] }\left \vert \tilde{u}\left( t\right) -u^{\ast }\left(
			t\right) \right \vert +\epsilon \varsigma _{13},  \label{EQ172}
			\end{equation}%
			where%
			\begin{equation}
			\kappa _{0}=\frac{1}{\Gamma \left( 2-\delta \right) }\left( \psi \left(
			T\right) -\psi \left( a\right) \right) ^{1-\delta };  \label{EQ173}
			\end{equation}
			We conclude that%
			\begin{equation}
			\left \Vert \tilde{u}-u^{\ast }\right \Vert _{\infty }\leq \frac{\epsilon
				\varsigma _{13}}{\left( 1-\varsigma _{11}-\varsigma _{12}\kappa _{0}\right) }%
			,\ 0<1-\varsigma _{11}-\varsigma _{12}\kappa _{0}<1.  \label{EQ174}
			\end{equation}
			Thus problem (\ref{EQ1}) is U-H stable. Further, using implies that
			solution of (\ref{EQ1}) is generalized U-H stable. This completes
			the proof.
		\end{proof}

\begin{corollary}
\label{C3}Let the conditions of Theorem \ref{T8} hold. Then the problem (\ref%
{EQ1}) is generalized U-H-R stable.
\end{corollary}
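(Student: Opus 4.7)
\textbf{Proof proposal for Corollary \ref{C3}.} The plan is to mimic the argument of Theorem \ref{T8} but track the pointwise function $\Phi(t)$ rather than the constant $\epsilon$, and then invoke the $\psi$-Gronwall inequality (Lemma \ref{L6}) to convert the integral estimate into the desired pointwise bound. First, suppose $\tilde{u}\in\mathcal{C}[a,T]$ satisfies the generalized inequality (\ref{EQ15}), i.e.\ $\bigl|\,({}^{c}D_{a+,t}^{\varrho,\psi})({}^{c}D_{a+,t}^{\varsigma,\psi}+\lambda)[\tilde u]-f_{\tilde u}(t)\,\bigr|\leq \Phi(t)$ for every $t\in[a,T]$. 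By a straightforward analogue of Remark \ref{R2} (with the constant $\epsilon\Phi$ replaced by $\Phi$) there is a function $g\in\mathcal{C}[a,T]$ with $|g(t)|\leq\Phi(t)$ such that $\tilde u=-\lambda (J_{a+,t}^{\varsigma,\psi})[\tilde u]+(J_{a+,t}^{\varrho+\varsigma,\psi})[f_{\tilde u}+g]+\phi_{\tilde u}(f+g)$, where $\phi_{\tilde u}$ is as in Lemma \ref{L8}. Let $u^{\ast}$ denote the unique fixed point of $\Psi$ supplied by Theorem \ref{T3}, so that $u^{\ast}=\Psi u^{\ast}$ on $[a,T]$.

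Next, I would estimate $|\tilde u(t)-u^{\ast}(t)|$ by splitting it into a ``fixed-point part'' and a ``perturbation by $g$'', exactly in the spirit of (\ref{EQ168})--(\ref{EQ169}). The Lipschitz hypothesis (A$_{1}$) controls $|f_{\tilde u}-f_{u^{\ast}}|$ by $L_1|\tilde u-u^{\ast}|+L_2|({}^{c}D_{a+,t}^{\delta,\psi})[\tilde u-u^{\ast}]|$, while the $g$-terms appear in the form $(J_{a+,t}^{\varrho+\varsigma,\psi})[g]$, $(J_{a+,\eta}^{\varrho+\varsigma,\psi})[g]$, $(J_{a+,T}^{\varrho+\varsigma,\psi})[g]$ and $(J_{a+,\xi}^{\varrho+\varsigma+\delta,\psi})[g]$; each of these is dominated by the corresponding $\psi$-integral of $\Phi$, which by hypothesis (A$_{4}$) is in turn bounded by a constant multiple of $\Phi(t)$. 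Collecting these contributions into a single constant $c_{\Phi}>0$, I arrive at
\[
|\tilde u(t)-u^{\ast}(t)|\leq c_{\Phi}\Phi(t)+\lambda\,(J_{a+,t}^{\varsigma,\psi})\bigl[|\tilde u-u^{\ast}|\bigr]+L_1\,(J_{a+,t}^{\varrho+\varsigma,\psi})\bigl[|\tilde u-u^{\ast}|\bigr]+L_2\,(J_{a+,t}^{\varrho+\varsigma-\delta,\psi})\bigl[|\tilde u-u^{\ast}|\bigr].
\]

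Finally, Lemma \ref{L6} (with $n=3$, $\varrho_1=\varsigma$, $\varrho_2=\varrho+\varsigma$, $\varrho_3=\varrho+\varsigma-\delta$, and $v(t)=c_{\Phi}\Phi(t)$) yields, after invoking Remark \ref{R1} and the monotonicity of $\Phi$,
\[
|\tilde u(t)-u^{\ast}(t)|\leq c_{\Phi}\Phi(t)\Bigl[E_{\varsigma,\psi}\bigl(\lambda\Gamma(\varsigma)(\psi(t)-\psi(a))^{\varsigma}\bigr)+E_{\varrho+\varsigma,\psi}\bigl(L_1\Gamma(\varrho+\varsigma)(\psi(t)-\psi(a))^{\varrho+\varsigma}\bigr)+E_{\varrho+\varsigma-\delta,\psi}\bigl(L_2\Gamma(\varrho+\varsigma-\delta)(\psi(t)-\psi(a))^{\varrho+\varsigma-\delta}\bigr)\Bigr].
\]
Since the bracketed Mittag--Leffler factor is bounded on $[a,T]$, absorbing it into a single constant $c_f>0$ produces $|\tilde u(t)-u^{\ast}(t)|\leq c_f\Phi(t)$, which is precisely Definition \ref{D4}. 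The main obstacle I anticipate is the uniform control of the boundary-functional terms $\phi_{\tilde u}(g)$ through (A$_{4}$): these quantities are evaluated at the points $\eta$, $\xi$, $T$ rather than at $t$, so one has to invoke the monotonicity of $\Phi$ together with the inequalities $(J_{a+,\eta}^{\varrho+\varsigma,\psi})[\Phi]\leq l_\Phi\Phi(\eta)\leq l_\Phi\Phi(t)$ for $t\geq\eta$, and analogously for $\xi$ and $T$, in order to realign every estimate with the right-hand side $\Phi(t)$ before Gronwall is applied.
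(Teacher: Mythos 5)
Your route is genuinely different from the paper's. The paper disposes of Corollary \ref{C3} in one line: it sets $\epsilon =1$ in the proof of Theorem \ref{T8} and reports the resulting uniform bound $\left\Vert \tilde{u}-u^{\ast }\right\Vert _{\infty }\leq \varsigma _{13}/\left( 1-\varsigma _{11}-\varsigma _{12}\kappa _{0}\right)$. You instead rerun the Gronwall machinery of Theorems \ref{T5}--\ref{T7} with the weight $\Phi (t)$ carried through, aiming directly at the pointwise estimate $\left\vert \tilde{u}(t)-u^{\ast }(t)\right\vert \leq c_{f}\Phi (t)$ demanded by Definition \ref{D4}. Your target is in fact the more faithful one, since a constant sup-norm bound only produces a right-hand side of the form $c_{f}\Phi (t)$ when $\Phi$ is bounded away from zero. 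Nevertheless, as written your argument has a gap.

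The inequality you feed to Lemma \ref{L6} is not the one the estimate actually produces. The difference $\phi _{\tilde{u}}\left( f+g\right) -\phi _{u^{\ast }}\left( f\right)$ contains, besides the $g$-terms that you correctly dominate by $\Phi$ via (A$_{4}$), the nonlocal terms $\lambda d_{21}\left( t\right) \left( J_{a_{+},\eta }^{\varsigma ,\psi }\right) \left[ \tilde{u}-u^{\ast }\right]$, $\lambda d_{22}\left( t\right) \left( \left( J_{a_{+},T}^{\varsigma ,\psi }\right) \left[ \tilde{u}-u^{\ast }\right] -\mu \left( J_{a_{+},\xi }^{\varsigma +\delta ,\psi }\right) \left[ \tilde{u}-u^{\ast }\right] \right)$ and $d_{11}\left( t\right) \left( J_{a_{+},\eta }^{\varrho +\varsigma ,\psi }\right) \left[ f_{\tilde{u}}-f_{u^{\ast }}\right]$, among others. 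These are integrals of the unknown difference over the full intervals $[a,\eta ]$, $[a,\xi ]$, $[a,T]$; they are neither of the Volterra form $\int_{a}^{t}$ required by Lemma \ref{L6} nor absorbable into $c_{\Phi }\Phi (t)$, because they depend on $\tilde{u}-u^{\ast }$ itself. To close the argument you must either show that these contributions vanish under the matching boundary data (\ref{EQ152}) --- which is what the reduction $c_{11}=c_{22}=c_{33}=0$ in Theorem \ref{T6} is meant to accomplish, and which you never invoke --- or bound them by a constant times $\sup_{[a,T]}\left\vert \tilde{u}-u^{\ast }\right\vert$ and close the loop with a smallness condition of the type $\varsigma _{11}+\varsigma _{12}\kappa _{0}<1$, which is the (non-Gronwall) mechanism of Theorem \ref{T8}. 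Two smaller points: (A$_{4}$) is not among the stated hypotheses of Theorem \ref{T8} (they are (A$_{1}$), (\ref{EQ115}) and (\ref{EQ152})), so the step $\left( J_{a_{+},t}^{\varrho +\varsigma ,\psi }\right) \left[ \Phi \right] \leq l_{\Phi }\Phi (t)$ requires adding (A$_{4}$) explicitly; and the passage from $L_{2}\left\vert \left( ^{c}D_{a+,t}^{\delta ,\psi }\right) \left[ \tilde{u}-u^{\ast }\right] \right\vert$ under the $\psi$-integral to the term $L_{2}\left( J_{a_{+},t}^{\varrho +\varsigma -\delta ,\psi }\right) \left[ \left\vert \tilde{u}-u^{\ast }\right\vert \right]$ in your scalar inequality uses the integration-by-parts identity (\ref{EQ142}) and should be stated, since it is where the matching of the data at $t=a$ enters.
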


\begin{proof}
			Set $\epsilon =1$ in the proof of Theorem \ref{T8}, we get%
			\begin{equation}
			\left \Vert \tilde{u}-u^{\ast }\right \Vert _{\infty }\leq \frac{\varsigma
				_{13}}{\left( 1-\varsigma _{11}-\varsigma _{12}\kappa _{0}\right) },\  \
			0<1-\varsigma _{11}-\varsigma _{12}\kappa _{0}<1.  \label{EQ175}
			\end{equation}
		\end{proof}

\begin{remark}
\label{RR4}
\begin{itemize}
  \item [(\textbf{i})] Under the assumptions of Theorem \ref{T6}, we
consider (\ref{EQ1}) and the inequality (\ref{EQ115}), one can renew the same procedure to confirm that (\ref{EQ1}) is U-H stable.
  \item [(\textbf{ii})] Other stability results for the equation (\ref{EQ1}) can be
discussed similarly.
\end{itemize}
\end{remark}
\section{Applications}

Will be provided in the revised submission.

\section{Conclusion}

\quad The Langevin equation has been introduced to characterize dynamical processes in a fractal medium in which the fractal and memory features with a dissipative memory kernel are incorporated. Therefore, the consideration of Langevin equation in frame of fractional derivatives settings would be providing better interpretation for real phenomena. Consequently, scholars have considered different versions of Langevin equation and thus many interesting papers have been reported in this regard. However, one can notice that most of existing results have been carried out with respect to the classical fractional derivatives.

In this paper, we have tried to promote the current results and considered the FLE in a general platform. The boundary value problem of nonlinear FLE involving $\psi $- fractional operator was investigated.
We employ the the newly accommodated $\psi $- fractional calculus to prove the following for the considered problem:
\begin{itemize}
  \item [(i.)] The existence and
uniqueness of solutions: Techniques of fixed point theorems are used to prove the results. Prior to the main theorems, the forms of solutions are derived for both linear and nonlinear problems.
  \item [(ii.)] Stability in sense of Ulam: We adopt the required definitions of U-H stability with respect to $\psi $- fractional derivative. The U-H-R and generalized U-H-R stability of the
solution are discussed. Gronwall inequality and integration by parts in frame of $\psi $- fractional drivative are also employed to complete the proofs.
 \item [(iii.)] Applications: Couple of particular examples are addressed at the end of the paper to show the consistency of the theoretical results.
\end{itemize}
We claim that the results of this paper are new and generalize some earlier results.
For further investigation, one can propose to study the properties of the solution of the considered problem via some numerical computations and simulations. We leave this as promising future
work. Results obtained in the present paper can be considered as a
contribution to the developing field of fractional calculus via generalized
fractional derivative operators.
\section*{Availability of data and material}
Not applicable.
\section*{Competing Interests}
The authors declare that they have no competing interests.
\section*{Funding} Not applicable.
\section*{Author's contributions}
All authors contributed equally and significantly to this paper. All authors have read and approved the final version of the manuscript.
\section*{Acknowledgement}
J. Alzabut would like to thank Prince Sultan University for
funding this work through research group Nonlinear Analysis Methods
in Applied Mathematics (NAMAM) group number RG-DES-2017-01-17.

\end{document}